\title[Long-time behaviour for a family of fourth order]{Long-time behaviour of a fully discrete Lagrangian scheme for a family of fourth order}
\author{Horst Osberger}
\address{Horst Osberger \\ Zentrum Mathematik \\ TU M\"unchen \\ Boltzmannstr. 3 \\ D-85748 Garching \\ Germany}
\email{osberger@ma.tum.de}
\thanks{This research was supported by the DFG Collaborative Research Center TRR 109, ``Discretization in Geometry and Dynamics''.}
\begin{document}

\newcommand{\setR}{\mathbb{R}}
\newcommand{\setN}{\mathbb{N}}
\newcommand{\dd}{\,\mathrm{d}}
\newcommand{\indy}{\mathbb{I}}
\newcommand{\eins}{\mathds{1}}

\newcommand{\theX}{\mathrm{X}}
\newcommand{\theZ}{\mathrm{Z}}
\newcommand{\theU}{\mathrm{U}}
\newcommand{\xvec}{\vec{\mathrm x}}
\newcommand{\yvec}{\vec{\mathrm y}}
\newcommand{\zvec}{\vec{\mathrm z}}
\newcommand{\vvec}{\vec{\mathrm v}}
\newcommand{\wvec}{\vec{\mathrm w}}
\newcommand{\spr}[2]{\left\langle#1,#2\right\rangle_\delta}
\newcommand{\nrm}[1]{\left\|#1\right\|_\delta}
\newcommand{\slp}[1]{\left|\partial_\theh #1\right|}
\newcommand{\rvec}{\vec{\mathrm \varphi'}}
\newcommand{\gvec}{\vec{g}}
\newcommand{\s}{{\widetilde{\xi}}_k}

\newcommand{\convf}{\mathbf{u}}
\newcommand{\cf}{\mathbf{u}}
\newcommand{\cfM}{\mathbf{w}}
\newcommand{\cz}{\mathbf{z}}
\newcommand{\cX}{\mathbf{X}}
\newcommand{\cY}{\mathbf{Y}}

\newcommand{\theh}{{\delta}}
\newcommand{\thet}{{\boldsymbol \tau}}
\newcommand{\dm}{{\overline{\delta}}}
\newcommand{\xspc}{\mathfrak{X}}
\newcommand{\xseq}{\mathfrak{x}}
\newcommand{\xseqN}{\xseq_\theh}
\newcommand{\zseq}{\mathfrak{z}}
\newcommand{\dens}{\mathcal{P}(\Omega)}
\newcommand{\densN}{\mathcal{P}_\theh(\Omega)}
\newcommand{\densNs}{\mathcal{P}_\theh^*(\Omega)}
\newcommand{\densQ}{\mathcal{P}_K^{\operatorname{quad}}(\Omega)}
\newcommand{\Lloc}{L_{\operatorname{loc}}}

\newcommand{\tv}[1]{\{#1\}_{\mathrm{TV}}}
\newcommand{\ti}[1]{\left\lbrace#1\right\rbrace_{\tau}}
\newcommand{\tti}[1]{\left\langle#1\right\rangle_{\tau}}

\newcommand{\Wmat}{\mathrm{W}}
\newcommand{\Wmatn}{\widetilde{\mathrm{W}}}

\newcommand{\I}{\{1,2,\ldots,K\}}
\newcommand{\Ih}{\{\tfrac{1}{2},\tfrac{3}{2},\ldots,\tfrac{2K-1}{2}\}}
\newcommand{\II}{\mathbb{I}_K}

\newcommand{\ival}{{\mathbb{I}_K^0}}
\newcommand{\hval}{{\mathbb{I}_K^{1/2}}}
\newcommand{\kmh}{{k-\frac{1}{2}}}
\newcommand{\kph}{{k+\frac{1}{2}}}
\newcommand{\kpd}{{k+\frac{3}{2}}}
\newcommand{\kmd}{{k-\frac{3}{2}}}
\newcommand{\jmh}{{j-\frac{1}{2}}}
\newcommand{\jph}{{j+\frac{1}{2}}}
\newcommand{\jmd}{{j-\frac{3}{2}}}
\newcommand{\kpmh}{{k\pm\frac{1}{2}}}
\newcommand{\kmph}{{k\mp\frac{1}{2}}}
\newcommand{\kappm}{{\kappa-\frac12}}
\newcommand{\kappp}{{\kappa+\frac12}}

\newcommand{\Kmh}{{K-\frac{1}{2}}}
\newcommand{\imh}{{\frac{1}{2}}}
\newcommand{\iph}{{\frac{3}{2}}}
\newcommand{\Kmd}{{K-\frac{3}{2}}}

\newcommand{\thegrad}{\operatorname{grad}_{\wass}}
\newcommand{\eps}{{\varepsilon}}
\newcommand{\wass}{\mathcal{W}_2}
\newcommand{\wassN}{\mathbf{W}_2}
\newcommand{\grad}{\partial_{\xvec}}
\newcommand{\wgrad}{\nabla_\delta}
\newcommand{\prss}{\mathrm{P}_\alpha}
\newcommand{\nci}{\mathfrak{F}}
\newcommand{\vel}{\mathbf{v}_\alpha}

\newcommand{\baru}{\bar{u}}
\newcommand{\hatz}{\widehat{z}}
\newcommand{\hatu}{\widehat{u}}

\newcommand{\Fal}{\mathcal{F}_{\alpha,\lambda}}
\newcommand{\Fhl}{\mathcal{F}_{1/2,\lambda}}
\newcommand{\Fan}{\mathcal{F}_{\alpha,0}}
\newcommand{\Fhn}{\mathcal{F}_{1/2,0}}
\newcommand{\Falzu}{\widetilde{\mathbf{F}}_{\alpha,\lambda}}
\newcommand{\Falz}{\mathbf{F}_{\alpha,\lambda}}
\newcommand{\Fhlz}{\mathbf{F}_{1/2,\lambda}}
\newcommand{\Fanz}{\mathbf{F}_{\alpha,0}}
\newcommand{\Fhnz}{\mathbf{F}_{1/2,0}}
\newcommand{\Fy}{\mathbf{F}_{\alpha}}

\newcommand{\Hal}{\mathcal{H}_{\alpha,\lambda}}
\newcommand{\Hhl}{\mathcal{H}_{1/2,\lambda}}
\newcommand{\Han}{\mathcal{H}_{\alpha,0}}
\newcommand{\Hhn}{\mathcal{H}_{1/2,0}}
\newcommand{\Halz}{\mathbf{H}_{\alpha,\lambda}}
\newcommand{\Hhlz}{\mathbf{H}_{1/2,\lambda}}
\newcommand{\Hanz}{\mathbf{H}_{\alpha,0}}
\newcommand{\Hhnz}{\mathbf{H}_{1/2,0}}
\newcommand{\Haiz}{\mathbf{H}_{\alpha,1}}
\newcommand{\Halzd}[1]{\mathcal{H}_{\alpha,\lambda}^\delta(#1)}

\newcommand{\olHal}{\overline{\Hal}}
\newcommand{\olFal}{\overline{\Fal}}
\newcommand{\Vz}{\mathbf{V}}

\newcommand{\fa}{f_\alpha}
\newcommand{\fh}{f_{1/2}}
\newcommand{\La}{\Lambda_{\alpha,\lambda}}
\newcommand{\Lh}{\Lambda_{1/2,\lambda}}
\newcommand{\dela}{\delta_\alpha}
\newcommand{\phia}{\varphi_\alpha}

\newcommand{\psia}{\psi_\alpha}
\newcommand{\psih}{\psi_{\frac{1}{2}}}

\newcommand{\hatf}{\theta}
\newcommand{\D}{\operatorname{D}_\theh}
\newcommand{\ee}{\mathbf{e}}
\newcommand{\intom}{\int_\Omega}

\newcommand{\uinf}{u^{\infty}}
\newcommand{\vinf}{v^{\infty}}
\newcommand{\xvecm}{\xvec_{\theh}^{\operatorname{min}}}
\newcommand{\um}{u_\delta^{\operatorname{min}}}
\newcommand{\umh}{\hat{u}_\delta^{\operatorname{min}}}
\newcommand{\bal}{\mathrm{b}_{\alpha,\lambda}}
\newcommand{\bhl}{\mathrm{b}_{1/2,\lambda}}
\newcommand{\ds}{\mathfrak{d}}
\newcommand{\bant}{\mathrm{b}_{\alpha,0}^*}
\newcommand{\bai}{\mathrm{b}_{\alpha,1}}
\newcommand{\taut}{\widehat{\tau}}
\newcommand{\lt}{\widehat{\lambda}}
\newcommand{\bantn}{\mathrm{b}_{\Delta,\alpha,0}^n}
\newcommand{\St}{S_\tau}
\newcommand{\bvec}{\vec{\mathrm b}}
\newcommand{\sh}{\hat{s}}
\newcommand{\Deltat}{{\widehat{\Delta}}}
\newcommand{\thett}{{{\boldsymbol \tau}_{\taut}}}

\newcommand{\Halzm}{\Halz^{\operatorname{min}}}
\newcommand{\Hanzm}{\Hanz^{\operatorname{min}}}
\newcommand{\Falzm}{\Falz^{\operatorname{min}}}

\newcommand{\vech}{{\vec\delta}}
\newcommand{\xseqNN}{\xseq_\vech}
\newcommand{\densNN}{\mathcal{P}_\vech(\Omega)}

\newcommand{\essinf}{\operatorname*{ess\,inf}}
\newtheorem{thm}{Theorem}
\newtheorem{prp}[thm]{Proposition}
\newtheorem{lem}[thm]{Lemma}
\newtheorem{cor}[thm]{Corollary}
\newtheorem{rmk}[thm]{Remark}
\newtheorem{dfn}[thm]{Definition}
\newtheorem*{nsc}{Numerical scheme}
\newtheorem{xmp}[thm]{Example}
\newenvironment{sop}{\textit{Scatch of proof}}{\hfill$\Box$}
\newenvironment{proof2}[1]{\noindent \textit{Proof of #1 .}}{\hfill$\Box$}

\selectlanguage{english}

\begin{abstract}
A fully discrete Lagrangian scheme for solving a family of fourth order equations numerically is presented.
The discretization is based on the equation's underlying gradient flow structure w.r.t. the $L^2$-Wasserstein distance,
and adapts numerous of its most important structural properties by construction, as conservation of mass and entropy-dissipation. 

In this paper, the long-time behaviour of our discretization is analyzed: 
We show that discrete solutions decay exponentially to equilibrium at the same rate as smooth solutions of the origin problem.
Moreover, we give a proof of convergence of discrete entropy minimizers towards Barenblatt-profiles or Gaussians, respectively,
using $\Gamma$-convergence.
\end{abstract}

\maketitle

%
\section{Introduction}
%
%
In this paper, we propose and study a fully discrete numerical scheme for a family of nonlinear fourth order equations
of the type
\begin{align}
	\partial_t u = -\big(u (u^{\alpha-1}u_{xx}^\alpha)_x\big)_x + \lambda(xu)_x 
	 \quad\textnormal{for } x\in\Omega=\setR, \, t>0 \label{eq:fofo}
\end{align}
and $u(0,.) = u^0$ on $\Omega$ at initial time $t=0$. 
The initial density $u^0\geq0$ is assumed to be compactly supported and integrable with total mass $M>0$, 
and we further require strict positivity of $u^0$ on $\operatorname{supp}(u^0)=[a,b]$.
For the sake of simplicity, let us further assume that $M=1$.
We are especially interested in the long-time behaviour of discrete solutions and their rate of decay towards equilibrium.
For the exponent in \eqref{eq:fofo}, we consider values $\alpha\in[\frac{1}{2},1]$, and assume $\lambda\geq0$. 
The most famous examples for parabolic equations described by \eqref{eq:fofo} are the so-called
\emph{DLSS equation} for $\alpha=\frac{1}{2}$, 
(first analysed by Derrida, Lebowitz, Speer and Spohn in \cite{DLSS1,DLSS2}
with application in semi-cunductor physics)
and the \emph{thin-film equation} for $\alpha=1$ --- 
indeed, for other values of $\alpha$, references are very rare in the literature, except 
\cite{MMS} of Matthes, McCann and Savaré.

Due to the physically motivated origin of equation \eqref{eq:fofo} (especially for $\alpha=\frac{1}{2}$ and $\alpha=1$),
it is not surprising that solutions to \eqref{eq:fofo} carry many structural properties
as for instance nonnegativity, the conservation of mass and the dissipation of (several) entropy functionals.
In section \ref{sec:structure}, we are going to list more properties of solutions to \eqref{eq:fofo}.
For the numerical approximation of solutions to \eqref{eq:fofo}, 
it is hence natural to ask for structure-preserving discretizations that inherit at least some of those properties.
A minimum criteria for such a scheme should be the preservation of non-negativity,
which can already be a difficult task, if standard discretizations are used.
So far, many (semi-)discretizations have been proposed in the literature, 
and most of them keep some basic structural properties of the equation's underlying nature.
Take for example \cite{BEJnum,CJTnum,JPnum,JuVi}, where positivity appears as a conclusion
of Lyaponov functionals --- a logarithmic/power entropy \cite{BEJnum,CJTnum,JPnum}
or some variant of a (perturbed) information functional.
But there is only a little number of examples, where structural properties of equation \eqref{eq:fofo}
are adopted from the discretization by construction. 
A very first try in this direction was a fully Lagrangian discretization for the DLSS equation 
by Düring, Matthes and Pina \cite{DMMnum}, which is based on its $L^2$-Wasserstein gradient flow representation 
and thus preserves non-negativity and dissipation of the Fisher-information.
A similar approach was then applied \cite{dlssv3}, again for the special case $\alpha=\frac{1}{2}$,
where we even showed convergence of our numerical scheme, 
which was -- as far as we know -- the first convergence proof of a fully discrete numerical scheme for the DLSS equation,
which additionally dissipates \emph{two} Lyapunov functionals.

%
\subsection{Description of the numerical scheme}\label{sec:scheme}
We are now going to present a scheme, which is practical, stable and easy to implement.
In fact our dicretization seems to be so mundane that one would not assume any special properties therein, at the first glance. 
But we are going to show later in section \ref{sec:structure}, that our numerical approximation 
can be derived as a natural restriction of a $L^2$-Wasserstein gradient flow 
in the potential landscape of the so-called \emph{perturbed information functional} 
\begin{align}\label{eq:info}
	\Fal(u) = \frac{1}{2\alpha}\intom\big(\partial_x u^\alpha\big)^2\dd x + \frac{\lambda}{2}\intom |x|^2 u(x) \dd x,
\end{align}
into a discrete Lagrangian setting, thus preserves a deep structure. 
The starting point for our discretization is the \emph{Lagrangian representation} of \eqref{eq:fofo}.
Since each $u(t,\cdot)$ is of mass $M$, there is a Lagrangian map $\theX(t,\cdot):[0,M]\to\Omega$
--- the so-called \emph{pseudo-inverse distribution function} of $u(t,\cdot)$ ---
such that
\begin{align}
  \label{eq:pseudo}
  \xi = \int_{-\infty}^{\theX(t,\xi)}u(t,x)\dd x, \quad \text{for each $\xi\in[0,M]$}.
\end{align}
Written in terms of $\theX$, the Wasserstein gradient flow for $\Fal$ 
turns into an $L^2$-gradient flow for
\begin{align*}
  \Fal(u\circ\theX) = \frac{1}{2\alpha}\int_0^M \left[\frac{1}{\theX_\xi^\alpha}\right]_\xi^2\frac{1}{\theX_\xi}\dd\xi
											+\frac{\lambda}{2}\int_0^M \theX^2\dd\xi,
\end{align*}
that is,
\begin{align}
  \label{eq:zeq}
  \partial_t\theX = \frac{2\alpha}{(2\alpha+1)^2}\partial_\xi\big(Z^{\alpha+\frac{3}{2}}\partial_{\xi\xi}Z^{\alpha+\frac{1}{2}}\big) + \lambda\theX, 
  \quad \text{where} \quad Z(t,\xi):=\frac1{\partial_\xi\theX(t,\xi)}=u\big(t,\theX(t,\xi)\big). 
\end{align}
To build a bridge from \eqref{eq:zeq} to the origin equation \eqref{eq:fofo}, remember that \eqref{eq:fofo} can be written as a transport equation,
\begin{align}\label{eq:transport}
	\partial_t u + (u\vel)_x = 0, \quad\textnormal{with velocity field}\quad \vel = -\left(\frac{\delta\Fal(u)}{\delta u}\right)_x,
\end{align}
where $\delta\Fal(u)/\delta u$ denotes the Eulerian first variation. 
So take the time derivative in equation \eqref{eq:pseudo} and use \eqref{eq:transport}, then a formal calculation yields
\begin{align*}
	0 &= \partial_t\theX(t,\xi) u(t,\theX(t,\xi)) + \int_{-\infty}^{\theX(t,\xi)} \partial_t u(t,\theX(t,\xi))\dd x \\
		&= \partial_t\theX(t,\xi) u(t,\theX(t,\xi)) - \int_{-\infty}^{\theX(t,\xi)} (u\vel)_x(t,x)\dd x
		= \partial_t\theX(t,\xi) u(t,\theX(t,\xi)) - (u\vel)\circ\theX(t,\xi).
\end{align*}
This is equivalent to 
\begin{align*}
	\partial_t\theX(t,\xi) = \vel\circ\theX(t,\xi)\quad\textnormal{for } (t,\xi)\in(0,+\infty)\times[0,M],
\end{align*}
which is further equivalent to \eqref{eq:zeq}.

Before we come to the proper definition of the numerical scheme, 
we fix a spatio-temporal discretization parameter $\Delta=(\thet;\delta)$:
Given $\tau>0$, introduce varying time step sizes $\thet=(\tau_1,\tau_2,\ldots)$ with $\tau_n\in(0,\tau]$, 
then a time decomposition of $[0,+\infty)$ is defined by $\{t_n\}_{n\in\setN}$ with $t_n:=\sum_{j=1}^n \tau_n$.
As spatial discretization, fix $K\in\setN$ and $\delta=M/K$, and declare an equedistant decomposition of the mass space $[0,M]$ through
the set $\{\xi_k\}_{k=0}^K$ with $\xi_k:=k\delta$, $k=0,\ldots,K$.

Our numerical scheme is now defined as a standard discretization of equation \eqref{eq:zeq}:
\begin{nsc}
Fix a discretization parameter $\Delta=(\thet;\delta)$.
Then for any $(\alpha,\lambda)\in[\frac{1}{2},1]\times[0,+\infty)$
and any initial density function $u^0\in L^1(\Omega)$ satisfying the above requirements, 
a numerical scheme for \eqref{eq:fofo} is recursively given as follows:
\begin{enumerate}
\item For $n=0$, define an initial sequence of monotone values $\xvec_\Delta^0:=(x_0^0,\ldots,x_K^0)\in\setR^{K+1}$
uniquely by $x_0^0=a$, $x_K=b$ and
\begin{align*}
	\xi_k = \int_{x_{k-1}^0}^{x_k^0} u^0(x) \dd x,\quad\textnormal{for any } k=1,\ldots,K-1.
\end{align*}
The vector $\xvec_\Delta^0$ describes a non-equidistant decomposition of the support $[a,b]$ of the initial density function $u^0$. 
In any interval $[x_{k-1}^0,x_k^0]$, $k=1,\ldots,K$, the density $u^0$ has mass $\delta$.
\item For $n\geq1$, define recursively a monotone vector $\xvec_\Delta^n:=(x_0^n,\ldots,x_K^n)\in\setR^{K+1}$ as a solution of the system,
consisting of $(K+1)$-many equations
\begin{align}
  \label{eq:dgf}
  \frac{x^n_k-x^{n-1}_k}{\tau_n} 
  = \frac{2\alpha}{(2\alpha+1)^2\delta}\left[
    (z^n_\kph)^{\alpha+\frac{3}{2}}[\D^2\zvec^{\alpha+\frac{1}{2}}]_\kph
    -(z^n_\kmh)^{\alpha+\frac{3}{2}}[\D^2\zvec^{\alpha+\frac{1}{2}}]_\kmh
  \right]
	+ \lambda x_k,
\end{align}
with $k=0,\ldots,K$, where the values $z_{\ell-\frac{1}{2}}^n\geq 0$ are defined by
\begin{align}\label{eq:zvec}
	z_{\ell-\frac12}^n = \begin{cases}\frac{\delta}{x_\ell^n - x_{\ell-1}^n} &\textnormal{, for } \ell=1,\ldots,K \\
													0 &\textnormal{, else}
							\end{cases},
\end{align}
and
\begin{align*}
	[\D^2\zvec^{\alpha+\frac{1}{2}}]_\kmh:= \delta^{-2}\big(z_{\kph}^{\alpha+\frac{1}{2}}-2z_\kmh^{\alpha+\frac{1}{2}}+z_\kmd^{\alpha+\frac{1}{2}}\big).
\end{align*}
We later show in Proposition \ref{prp:wellposed}, that the solvability of the system \eqref{eq:dgf} is guaranteed.
\end{enumerate}
The above procedure $(1)-(2)$ yields a sequence of monotone vectors $\xvec_\Delta:=(\xvec_\Delta^0,\xvec_\Delta^1,\ldots,\xvec_\Delta^n,\ldots)$, 
and any entry $\xvec_\Delta^n$ defines a spatial decomposition of the compact interval $[x_0^n,x_K^n]\subset\Omega$, $n\in\setN$.
Fixing $k=1,\ldots,K$, the sequence $n\mapsto x_k^n$ defines a discrete temporal evolution of spatial grid points in $\Omega$,
and if one assigns each interval $[x_{k-1}^n,x_k^n]$ a constant mass package $\delta$,
the map $n\mapsto [x_{k-1}^n,x_k^n]$ characterizes the temporal movement of mass.
Hence $\xvec_\Delta$ is uniquely related to a sequence of local constant density functions $u_\Delta:=(u_\Delta^0,u_\Delta^1,\ldots,u_\Delta^n,\ldots)$, 
where each function $u_\Delta^n:\Omega\to\setR_+$ holds
\begin{align}\label{eq:cf}
  u_\Delta^n (x) = \cf_\theh[\xvec_\Delta^n] := \sum_{k=1}^K\frac\delta{x^n_k-x^n_{k-1}} \indy_{(x_{k-1},x_k]}(x).
\end{align}
\end{nsc}

We will see later in section \ref{sec:structure_cont}, 
that the information functional $\Fal$ can be derived using the dissipation of the entropy
\begin{equation*}
	\Hal(u) = \intom \phia(u)\dd x + \frac{\La}{2}\intom |x|^2u(x)\dd x , \quad\textnormal{with}\quad
	\phia(s):=\begin{cases}  \Theta_\alpha \frac{s^{\alpha+1/2}}{\alpha-1/2}, & \alpha\in(\tfrac{1}{2},1] \\
													 \Theta_{1/2} s\ln(s), &\alpha=\tfrac{1}{2} \end{cases},
\end{equation*}
with constants $\Theta_\alpha := \sqrt{2\alpha}/(2\alpha+1)$, and $\La := \sqrt{\lambda/(2\alpha+1)}$.
As replacements for the entropy $\Hal$ and the perturbed information functional $\Fal$, we introduce
\begin{align}\label{eq:Halz}
  \Halz(\xvec) :=\delta\sum_{k=1}^K \fa(z_\kmh) + \frac{\La}{2}\delta\sum_{k=0}^K |x_k|^2,
	\quad\textnormal{with}\quad
	\fa(s) := \begin{cases} \Theta_\alpha \frac{s^{\alpha-1/2}}{\alpha-1/2}, & \alpha\in(\tfrac{1}{2},1] \\
													\Theta_{1/2} \ln(s) ,& \alpha=\tfrac{1}{2}\end{cases},
\end{align}
and
\begin{align}\label{eq:Falz}
	\Falz(\xvec) := \Theta_\alpha^2\delta\sum_{k\in\ival}\left(\frac{z_\kph^{\alpha+\frac{1}{2}}-z_\kmh^{\alpha+\frac{1}{2}}}{\delta}\right)^2	+ \frac{\lambda}{2}\delta\sum_{k=0}^K |x_k|^2.
\end{align}
%
\subsection{Familiar schemes}
The construction of numerical schemes as a solution of discrete Wasserstein gradient flows
with Lagrangian representation is not new in the literature.
Many approaches in this spirit have been realised for second-order diffusion equation \cite{Budd,BCW,MacCamy,Russo},
but also for chemotaxis systems \cite{BCC},
for non-local aggregation equations \cite{CarM,Mary},
and for variants of the Boltzmann equation \cite{GosT2}. 
We further refer \cite{Kinderlehrer} to the reader interested in a very general numerical treatement of Wasserstein gradient flows.
In case of fourth order equations, there are some results for the thin-film equation and its more general 
version, the Hele-Shaw flow, see \cite{Naldi,GosT2}, but converegence results are missing.
Rigorous stability and convergence results for \emph{fully discrete} schemes are rare and can just be found in \cite{GosT,dde} for second order equations,
and in \cite{dlssv3} for the DLSS equation.
However, there are results available for \emph{semi-discrete} Lagrangian approximations,
see e.g. \cite{ALS,Evans}.

%
\subsection{Main results}
In this section, fix a discretization $\Delta=(\thet;\delta)$ with $\tau,\delta>0$.
For any solution $\xvec_\Delta$ of \eqref{eq:dmm2},
we will further denote by $u_\Delta=(u_\Delta^0,u_\Delta^1,\ldots)$ the corresponing sequence of local constant density functions, as defined in \eqref{eq:cf}.

All analytical results that will follow, arise from the very fundamental observation, that
solutions to the scheme defined in section \ref{sec:scheme} can be successively derived 
as minimizers of the \emph{discrete minimizing movemend scheme}
\begin{align}\label{eq:dmm2}
	\xvec\mapsto\frac\delta{2\tau_n}\sum_k \big(x_k-x_k^{n-1})^2 + \Falz(\xvec).
\end{align}
An immediate consequence of the minimization procedure is, that solutions $\xvec_\Delta^n$ dissipate the functional $\Falz$.

Concerning the long-time behaviour of solutions $\xvec_\Delta$, remarkable similarities to the continuous case appear.
Assuming first the case $\lambda>0$, it turns out that
the unique minimizer $\xvecm$ of $\Halz$ is even a minimizer of the discrete information functional $\Falz$,
and the corresponding set of density functions $\um = \cf_\theh[\xvecm]$
converges for $\delta\to0$ towards a Barenblatt-profile $\bal$ or Gaussian $\bhl$, respectively, that is defined by
\begin{gather}
	\bal = \big(a-b|x|^2\big)_+^{1/(\alpha-1/2)},\quad b = \frac{\alpha-1/2}{\sqrt{2\alpha}}\La 
	\quad\textnormal{if } \alpha> 1/2 \textnormal{ and} \label{eq:Barenblatt}\\
	\bhl = a e^{-\Lh|x|^2} \quad\textnormal{if } \alpha=1/2, \label{eq:Gaussian}
\end{gather}
where $a\in\setR$ is chosen to conserve unit mass.
Beyond this, solutions $\xvec_\Delta^n$ satisfying \eqref{eq:dgf}
converge as $n\to\infty$ towards a minimizer $\xvecm$ of $\Falz$
with an exponential decay rate, which is "asymptotically equal" to the one obtained in the continuous case.
The above results are merged in the following theorems:
\begin{thm}\label{thm:main1}
Assume $\lambda>0$.
Then the sequence of minimizers $\um$ holds
\begin{align}
	\um \xrightarrow{\delta\to0} \bal &,\textnormal{ strongly in } L^p(\Omega) 
	\textnormal{ for any } p\geq 1, \label{eq:Lconv} \\
	\umh \xrightarrow{\delta\to0} \bal &,\textnormal{ uniformly on } \Omega, \label{eq:uniform}
\end{align}
where $\umh$ is a locally affine interpolation of $\um$ defined in Lemma \ref{lem:Lconv}.
\end{thm}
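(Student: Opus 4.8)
The plan is to obtain \eqref{eq:Lconv}--\eqref{eq:uniform} from the $\Gamma$-convergence of the discrete entropies $\Halz$ towards $\Hal$ together with equi-coercivity: this forces the unique discrete minimiser $\xvecm$ of $\Halz$ to converge to the unique minimiser of $\Hal$ among unit-mass densities of finite second moment, which a direct Euler--Lagrange computation (strict convexity of $\phia$) identifies with $\bal$, resp.\ the Gaussian $\bhl$ if $\alpha=\tfrac12$. Throughout I would work in Lagrangian coordinates, identifying a monotone vector $\xvec=(x_0,\dots,x_K)$ with the piecewise-affine interpolant $\cX_\delta[\xvec]\colon[0,1]\to\setR$ of the nodes $(\xi_k,x_k)$; this is a monotone function and is precisely the pseudo-inverse distribution function of $\cf_\theh[\xvec]$. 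Two elementary identities will be used repeatedly: $\delta\fa(z_\kmh)=(x_k-x_{k-1})\phia(z_\kmh)$, hence $\delta\sum_k\fa(z_\kmh)=\intom\phia(\cf_\theh[\xvec])\dd x$, and $\delta\sum_{k=0}^K|x_k|^2\ge\int_0^1\cX_\delta[\xvec]^2\dd\xi=\intom|x|^2\cf_\theh[\xvec]\dd x$; together they give the pointwise bound $\Halz(\xvec)\ge\Hal(\cf_\theh[\xvec])$. Finally, the convergence of $\cX_\delta[\xvecm]$ obtained below, together with the energy bounds, is exactly the hypothesis of Lemma~\ref{lem:Lconv}, which produces the interpolation $\umh$ and upgrades to the two asserted modes of convergence.

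\emph{A priori bound.} I would test against the explicit competitor $\xvec_\delta^{\mathrm{rec}}$ given by the equal-mass-$\delta$ partition of $\bal$ if $\alpha>\tfrac12$ (whose support is compact), and of the truncated, renormalised Gaussian $\bhl\indy_{[-R_\delta,R_\delta]}/\!\int_{-R_\delta}^{R_\delta}\bhl$ with $R_\delta\to\infty$ chosen slowly if $\alpha=\tfrac12$; one verifies $\Halz(\xvec_\delta^{\mathrm{rec}})\to\Hal(\bal)$, so minimality gives $\limsup_{\delta\to0}\Halz(\xvecm)\le\Hal(\bal)$, in particular $\Halz(\xvecm)\le C$. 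Since $\xvecm$ also minimises $\Falz$ and $\Fal(\bal)<\infty$, the same competitor bounds $\Falz(\xvecm)$ uniformly. From $\intom|x|^2\um\dd x=\int_0^1\cX_\delta[\xvecm]^2\dd\xi\le C$ and monotonicity, $\cX_\delta[\xvecm]$ is bounded at every interior point of $(0,1)$ (so $\{\um\}$ is tight), and the bound on $\intom\phia(\um)\dd x$, combined with the second-moment bound, yields equi-integrability of $\{\um\}$ via de la Vallée--Poussin.

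\emph{Compactness, $\Gamma$-$\liminf$, identification.} By Helly's selection theorem, along a subsequence $\cX_\delta[\xvecm]\to\cX_*$ a.e.\ on $(0,1)$ with $\cX_*$ monotone; equivalently $\um\dd x$ converges narrowly, and equi-integrability promotes this to $\um\rightharpoonup u_*$ weakly in $L^1$ with $\cX_*$ the pseudo-inverse of $u_*$. Using $\Halz(\xvecm)\ge\Hal(\um)$ and the narrow lower semicontinuity of $\Hal$ on $\mathcal P_2(\Omega)$ — standard for a convex superlinear internal energy plus a confining potential when $\alpha>\tfrac12$, and for $\alpha=\tfrac12$ obtained by the usual Gaussian-comparison rewriting of $\Theta_{1/2}\intom u\ln u\dd x$ as a relative entropy plus a positive multiple of the second moment — we get $\Hal(u_*)\le\liminf_{\delta\to0}\Halz(\xvecm)$. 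Combined with the a priori bound this gives $\Hal(u_*)\le\Hal(\bal)=\min\Hal$, hence $u_*=\bal$ by uniqueness of the minimiser, so $\cX_*$ is the pseudo-inverse of $\bal$; since this limit is independent of the subsequence, the whole family converges. Feeding this convergence and the energy bounds into Lemma~\ref{lem:Lconv} yields \eqref{eq:Lconv} and \eqref{eq:uniform}.

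\emph{Main obstacle.} The hard part is the behaviour of the competitor $\xvec_\delta^{\mathrm{rec}}$ near $\partial\operatorname{supp}\bal$ and, for $\alpha=\tfrac12$, at infinity. For $\alpha>\tfrac12$ the boundary cells of the equal-mass partition are long, so the corresponding $z_{1/2},z_{\Kmh}$ are small; one must check that their contribution $\delta\fa(z)\sim\delta z^{\alpha-1/2}$ to $\Halz$, and the analogous one to $\Falz$, is $o(1)$ — which works because $\alpha\ge\tfrac12$ and $\Fal(\bal)<\infty$. For $\alpha=\tfrac12$ the truncation radius $R_\delta$ must be balanced: slowly enough that the discarded mass, second moment and entropy $\Theta_{1/2}\!\int_{|x|>R_\delta}\bhl\ln\bhl\,\dd x$ are $o(1)$, yet fast enough — $\delta R_\delta^2\to0$ suffices — that the extreme cells, where $z_{1/2},z_{\Kmh}\approx e^{-\Lh R_\delta^2}$, do not spoil $\Halz(\xvec_\delta^{\mathrm{rec}})\to\Hal(\bhl)$; e.g.\ $R_\delta=\log(1/\delta)$ works. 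The remaining steps — the Riemann-sum estimates, the inequality $\Halz\ge\Hal\circ\cf_\theh$, weak-$L^1$/narrow lower semicontinuity, and the Helly argument — are routine.
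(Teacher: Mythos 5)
Your first stage --- $\Gamma$-convergence of the discrete entropies, compactness of the Lagrangian maps, and identification of the limit of $\um$ as $\bal$ --- is sound and close in spirit to the paper, which likewise proves $\Gamma$-convergence of $\Halzd{\cdot}$ to $\Hal$ (recovery sequence via the piecewise-affine projection of the Lagrangian map plus Jensen's inequality) and then invokes equi-coercivity to conclude $\um\to\bal$ with respect to $\wass$. Two remarks on that part: the paper obtains the $\delta$-uniform bound on $\Falz(\xvecm)$ for free from the discrete entropy--information relation \eqref{eq:dfeir} together with $\wgrad\Halz(\xvecm)=0$, which spares you the delicate verification that $\Falz(\xvec_\delta^{\mathrm{rec}})$ stays bounded near $\partial\operatorname{supp}\bal$; and for the $\liminf$ the paper simply uses that $\Halzd{\cdot}$ coincides with $\Hal$ on $\densN$, so your inequality $\Halz(\xvec)\ge\Hal(\cf_\theh[\xvec])$ is an acceptable substitute.

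The genuine gap is the final step. Lemma \ref{lem:Lconv} is not an auxiliary ``upgrade'' lemma whose hypotheses are narrow convergence plus energy bounds: its conclusion is verbatim \eqref{eq:Lconv}--\eqref{eq:uniform}, i.e.\ it \emph{is} Theorem \ref{thm:main1}, and you may only borrow from it the definition of $\umh$. By ``feeding this convergence into Lemma \ref{lem:Lconv}'' you are assuming the conclusion. What is missing --- and what constitutes the analytic core of the paper's argument --- is the passage from $\wass$-convergence (equivalently, weak-$L^1$ convergence plus tightness) to the two asserted modes; neither follows from narrow convergence alone. The paper supplies: (i) a $\delta$-uniform bound on the total variation $\tv{\prss(\um)}$ with $\prss(s)=\Theta_\alpha s^{\alpha+1/2}$, obtained by pairing $\wgrad\Hanz(\xvecm)=\wgrad\Halz(\xvecm)-\La\xvecm$ against arbitrary $\yvec$ with $\|\yvec\|_\infty\le1$, rewriting the pairing as $-\intom\prss(\um)\varphi_x\dd x$ for a compactly supported Lipschitz test function, and estimating by Cauchy--Schwarz using the uniform bounds on $\Halz(\xvecm)$ and $\Falz(\xvecm)$; BV-compactness and the superlinear growth of $\prss$ then give \eqref{eq:Lconv}. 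And (ii) a $\delta$-uniform $H^1(\Omega)$ bound on the affine interpolant, $\|\umh\|_{H^1(\Omega)}^2\le C_\alpha\|\umh\|_{L^\infty(\Omega)}^{2(1-\alpha)}\Falz(\xvecm)$, proved via the concavity of $s\mapsto s^{\alpha-1/2}$, which yields equicontinuity and hence \eqref{eq:uniform}. Without (i) and (ii), or some replacement (e.g.\ a Visintin-type argument combining weak convergence with convergence of the strictly convex entropy for the $L^1$ part, which you also do not carry out), your proposal establishes only convergence in $\wass$, not the theorem.
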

\begin{thm}\label{thm:main2}
For $\lambda>0$, any sequence of monotone vectors $\xvec_\Delta$ satisfying \eqref{eq:dmm2} dissipates the entropies $\Halz$ and $\Falz$ at least exponential, i.e.
\begin{align}
	\Halz(\xvec_\Delta^n) - \Halzm &\leq \left(\Halz(\xvec_\Delta^0) - \Halzm\right) e^{-\frac{2\lambda}{1+\lambda\tau} t_n}, 
	\quad\textnormal{and} \label{eq:expH} \\
	\Falz(\xvec_\Delta^n) - \Falzm &\leq \left(\Falz(\xvec_\Delta^0) - \Falzm\right) e^{-\frac{2\lambda}{1+\lambda\tau} t_n}, \label{eq:expF}
\end{align}
with $\Halzm=\Halz(\xvecm)$ and $\Falzm=\Falz(\xvecm)$. The associated sequence of densities $u_\Delta$ further holds
\begin{align}\label{eq:CK}
	\|u_\Delta^n - \um\|_{L^1(\Omega)}^2 \leq c_{\alpha,\lambda} \left(\Halz(\xvec_\Delta^0) - \Halzm\right) e^{-\frac{2\lambda}{1+\lambda\tau} t_n},
\end{align}
for any time step $n=1,2,\ldots$, where $c_{\alpha,\lambda}>0$ depends only on $\alpha,\lambda$. 
\end{thm}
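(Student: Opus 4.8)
The plan is to exploit the two facts highlighted in the introduction: each iterate $\xvec_\Delta^n$ minimizes the functional in \eqref{eq:dmm2}, hence solves the Euler--Lagrange identity \eqref{eq:dgf}, which in compact form reads $\tau_n^{-1}\big(\xvec_\Delta^n-\xvec_\Delta^{n-1}\big)=-\wgrad\Falz(\xvec_\Delta^n)$ with respect to the weighted scalar product $\spr{\cdot}{\cdot}$; and $\xvecm$ is the unique minimizer of both $\Halz$ and $\Falz$. The algebraic cornerstone is a \emph{discrete entropy--information identity}: a summation by parts using only the explicit forms of $\fa$, $\phia$, $\Theta_\alpha$, $\La$ and the boundary convention \eqref{eq:zvec} shows that, up to an additive constant $c_\alpha\ge 0$ that vanishes for $\alpha>\tfrac12$,
\begin{align*}
  \Falz(\xvec)=\nrm{\wgrad\Halz(\xvec)}^2+2\big(\alpha-\tfrac12\big)\La\,\Halz(\xvec)+c_\alpha ;
\end{align*}
in particular $\Falzm=2(\alpha-\tfrac12)\La\,\Halzm+c_\alpha$, and every critical point of $\Halz$ minimizes $\Falz$. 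I would combine this with the elementary fact that for $\alpha\in[\tfrac12,1]$ the maps $g\mapsto\fa(\delta/g)$ are convex on $(0,\infty)$, so that $\Halz$ is $\La$-uniformly convex on the cone of monotone vectors with respect to $\nrm{\cdot}$; this yields the discrete Polyak--{\L}ojasiewicz estimate $\nrm{\wgrad\Halz(\xvec)}^2\ge2\La\big(\Halz(\xvec)-\Halzm\big)$ and the quadratic bound $\Halz(\xvec)-\Halzm\ge\tfrac{\La}2\nrm{\xvec-\xvecm}^2$.

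Differentiating the identity gives $\wgrad\Falz=2(\wgrad^2\Halz)\wgrad\Halz+2(\alpha-\tfrac12)\La\,\wgrad\Halz$, and since $\wgrad^2\Halz\succeq\La\,\mathrm{Id}$ in the weighted metric,
\begin{align*}
  \spr{\wgrad\Halz(\xvec)}{\wgrad\Falz(\xvec)}\ge(2\alpha+1)\La\,\nrm{\wgrad\Halz(\xvec)}^2\ge(2\alpha+1)\La\cdot2\La\big(\Halz(\xvec)-\Halzm\big)=2\lambda\big(\Halz(\xvec)-\Halzm\big),
\end{align*}
the last step being exactly the normalization $\La^2=\lambda/(2\alpha+1)$. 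Inserting the Euler--Lagrange identity into the $\La$-convexity inequality for $\Halz$ along the segment $[\xvec_\Delta^{n-1},\xvec_\Delta^n]$,
\begin{align*}
  \Halz(\xvec_\Delta^{n-1})-\Halz(\xvec_\Delta^n)\ge\tau_n\spr{\wgrad\Halz(\xvec_\Delta^n)}{\wgrad\Falz(\xvec_\Delta^n)}+\tfrac{\La}2\tau_n^2\nrm{\wgrad\Falz(\xvec_\Delta^n)}^2\ge 2\lambda\tau_n\big(\Halz(\xvec_\Delta^n)-\Halzm\big),
\end{align*}
which already establishes monotone decay of $\Halz$ and gives $\Halz(\xvec_\Delta^n)-\Halzm\le(1+2\lambda\tau_n)^{-1}\big(\Halz(\xvec_\Delta^{n-1})-\Halzm\big)$; iterating over $n$, keeping the non-negative remainder term, and using $\ln(1+x)\ge x/(1+x)$ together with $\tau_n\le\tau$ yields \eqref{eq:expH}. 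For \eqref{eq:expF} I would return to the identity: the term $2(\alpha-\tfrac12)\La\big(\Halz(\xvec_\Delta^n)-\Halzm\big)$ decays by \eqref{eq:expH}, while $\nrm{\wgrad\Halz(\xvec_\Delta^n)}^2\le C\big(\Halz(\xvec_\Delta^n)-\Halzm\big)$ once one knows the iterates remain in a fixed sublevel set of the non-increasing functional $\Halz$, which by the coercivity of $\Halz$ on the monotone cone is a compact subset of its interior on which $\Halz$ is smooth with bounded derivatives; adding the two contributions gives \eqref{eq:expF}.

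The bound \eqref{eq:CK} then follows from \eqref{eq:expH} via a \emph{discrete Csisz\'ar--Kullback--Pinsker inequality} $\|\cf_\theh[\xvec]-\um\|_{L^1(\Omega)}^2\le c_{\alpha,\lambda}\big(\Halz(\xvec)-\Halzm\big)$: starting from $\Halz(\xvec)-\Halzm\ge\tfrac{\La}2\nrm{\xvec-\xvecm}^2$ (or, for a sharper constant, from a pointwise Bregman-divergence estimate for $\fa$), one controls the $L^1$-distance of the two piecewise-constant densities $\cf_\theh[\xvec]$ and $\um=\cf_\theh[\xvecm]$ --- which carry the same mass packages $\delta$ on the decompositions induced by $\xvec$ and by $\xvecm$ --- by a constant multiple of $\nrm{\xvec-\xvecm}$, the constant depending only on the uniform upper and lower bounds for the gaps; evaluating at $\xvec=\xvec_\Delta^n$ and combining with \eqref{eq:expH} finishes the proof.

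The main obstacle I anticipate is the discrete entropy--information identity and, above all, the honest treatment of its boundary contributions: the sums defining $\Falz$ and $\nrm{\wgrad\Halz}^2$ run over slightly different index sets, and the telescoping in the summation by parts produces endpoint terms at $k=0,K$ that must either cancel against the boundary structure of \eqref{eq:dgf}--\eqref{eq:zvec} or collapse to the constant $c_\alpha$ --- it is exactly this bookkeeping that keeps the clean inequality $\spr{\wgrad\Halz}{\wgrad\Falz}\ge2\lambda(\Halz-\Halzm)$ intact. The secondary difficulty is the uniform-in-$n$ control of the gaps, needed for the Lipschitz estimate behind \eqref{eq:expF} and for the discrete Csisz\'ar--Kullback--Pinsker inequality; it has to be drawn from the coercivity of $\Halz$ (or $\Falz$) together with its monotone decay, and made quantitative enough that $c_{\alpha,\lambda}$ depends on $\alpha$ and $\lambda$ only.
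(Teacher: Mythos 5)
Your treatment of \eqref{eq:expH} and \eqref{eq:CK} is essentially the paper's own argument: the discrete entropy--information relation you postulate is exactly Corollary \ref{cor:dfeir} (with your $c_\alpha$ playing the role of the constant $\Lh$ in the case $\alpha=\tfrac12$), the $\La$-convexity and the two-sided bound \eqref{eq:HleqF} are Lemma \ref{lem:Lconvex}, the estimate $\spr{\wgrad\Halz}{\wgrad\Falz}\ge(2\alpha+1)\La\nrm{\wgrad\Halz}^2$ obtained by differentiating the relation and using $\grad^2\Halz\succeq\La$ is precisely \eqref{eq:step1} in Lemma \ref{lem:exp}, and the iteration via $(1+2\lambda\tau_n)y_n\le y_{n-1}$ plus the discrete Gronwall lemma is how the paper concludes. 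The Csisz\'ar--Kullback step for \eqref{eq:CK} is likewise what the paper does (it simply cites \cite[Theorem 30]{CaJuMa}); your worry about needing gap bounds there is a real bookkeeping issue, but not a structural one.

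The genuine gap is in your proof of \eqref{eq:expF}. You propose to transfer the decay of $\Halz-\Halzm$ to $\Falz-\Falzm$ through a reverse \L{}ojasiewicz bound $\nrm{\wgrad\Halz(\xvec)}^2\le C\bigl(\Halz(\xvec)-\Halzm\bigr)$ on a sublevel set. Such a bound needs an upper bound on $\grad^2\Halz$, i.e.\ on quantities like $z_\kappa^{\alpha+3/2}/\delta^2$, so the constant $C$ degenerates as $\delta\to0$ and depends on the (a priori uncontrolled) minimal gaps $x_k-x_{k-1}$ along the whole trajectory; moreover, even granting it, you obtain a prefactor $C\bigl(\Halz(\xvec_\Delta^0)-\Halzm\bigr)$ rather than the stated $\Falz(\xvec_\Delta^0)-\Falzm$. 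So this route proves a strictly weaker statement than \eqref{eq:expF}. The paper avoids the reverse inequality altogether by exploiting the minimizing-movement structure: comparing the minimizer $\xvec_\Delta^n$ of \eqref{eq:dmm2} with the De Giorgi interpolant $\xvec_\sigma$ of the \emph{entropy} flow started at $\xvec_\Delta^n$ gives
\begin{align*}
\spr{\wgrad\Falz(\xvec_\Delta^n)}{\wgrad\Halz(\xvec_\Delta^n)}\le\frac{1}{\tau_n}\nrm{\wgrad\Halz(\xvec_\Delta^n)}\,\nrm{\xvec_\Delta^n-\xvec_\Delta^{n-1}},
\end{align*}
which combined with \eqref{eq:step1} yields $\tau_n\sqrt{(2\alpha+1)\lambda}\,\nrm{\wgrad\Halz(\xvec_\Delta^n)}\le\nrm{\xvec_\Delta^n-\xvec_\Delta^{n-1}}$, hence by \eqref{eq:FleqgradH}
\begin{align*}
2\tau_n^2\lambda\bigl(\Falz(\xvec_\Delta^n)-\Falzm\bigr)\le\nrm{\xvec_\Delta^n-\xvec_\Delta^{n-1}}^2.
\end{align*}
Feeding this (and its analogue along the De Giorgi interpolant of the $\Falz$-flow, using the monotonicity of $\sigma\mapsto\Falz(\xvec_\sigma^n)$) into the interpolation identity \eqref{eq:Yosida_int} of Lemma \ref{lem:Yosida_discrete} gives directly $(1+2\tau_n\lambda)\bigl(\Falz(\xvec_\Delta^n)-\Falzm\bigr)\le\Falz(\xvec_\Delta^{n-1})-\Falzm$, i.e.\ the same clean contraction as for $\Halz$, with no $\delta$-dependent constants and the correct prefactor. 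This is the one idea your proposal is missing.
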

Let us now consider the zero-confinement case $\lambda=0$.
In the continuous setting, the long-time behaviour of solutions to \eqref{eq:fofo} with $\lambda=0$
can be studied by a rescaling of solutions to \eqref{eq:fofo} with $\lambda>0$. 
We are able to translate this methode into the discrete case and derive a discrete counterpart of \cite[Corollary 5.5]{MMS},
which describes the intermediate asymptotics of solutions that approach self-similar Barenblatt profiles as $t\to\infty$.
\begin{thm}\label{thm:main3}
Assume $\lambda=0$ and take a sequence of monotone $\xvec_\Delta^n$ satisfying \eqref{eq:dmm2}.
Then there exists a constand $c_\alpha>0$ depending only on $\alpha$, such that
\begin{align*}
	\|u_\Delta^n - \bantn\|_{L^1(\Omega)}
	\leq c_\alpha\sqrt{\Hanz(\xvec_\Delta^0) - \Hanzm} (R_\Delta^n)^{-1},
	\quad\textnormal{with}\quad R_\Delta^n := \big(1+a_\tau(2\alpha+3)t_n\big)^{\frac{1}{b_\tau(2\alpha+3)}},
\end{align*}
where $\bantn$ is a rescaled discrete Barenblatt profile and $a_\tau,b_\tau>0$, 
such that $a_\tau,b_\tau\to 1$ for $\tau\to0$, see section \ref{sec:confn} for more details.
\end{thm}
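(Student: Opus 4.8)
The plan is to carry out, at the fully discrete level, the self-similar rescaling that turns the zero-confinement problem into a confined one --- the device behind the continuous intermediate asymptotics of \cite[Corollary~5.5]{MMS} --- and then to invoke the Csisz\'ar--Kullback estimate \eqref{eq:CK} of Theorem~\ref{thm:main2}. Fix the auxiliary confinement strength $\lt=1$. First I would introduce a scaling sequence $(R^n)_{n\ge0}$ by $R^0=1$ and the implicit recursion $R^n-R^{n-1}=\tau_n(R^n)^{-(2\alpha+2)}$; since $r\mapsto r-\tau_n r^{-(2\alpha+2)}$ is strictly increasing from $-\infty$ to $+\infty$ on $(0,\infty)$ this determines $R^n>R^{n-1}\ge1$ uniquely. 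Set $\widehat\tau_n:=(R^n-R^{n-1})/R^{n-1}$ --- so that $0<\widehat\tau_n\le\tau_n\le\tau$, because $R^{n-1},R^n\ge1$ --- and $\yvec^n:=(R^n)^{-1}\xvec^n$, $w_\Delta^n:=\cf_\theh[\yvec^n]$. The exponent $2\alpha+3$ already appearing in $R_\Delta^n$ is no accident: it is the parabolic scaling exponent of \eqref{eq:fofo}.

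The key structural fact I would establish is that $\yvec_\Delta=(\yvec^0,\yvec^1,\dots)$ solves the minimizing-movement scheme \eqref{eq:dmm2} for the pair $(\alpha,1)$ with the rescaled step sizes $(\widehat\tau_1,\widehat\tau_2,\dots)$. Since $\xvec_\Delta$ solves \eqref{eq:dmm2} with $\lambda=0$, $\xvec^n$ minimizes $\frac\delta{2\tau_n}\sum_k(x_k-x_k^{n-1})^2+\Fanz(\xvec)$. Using the homogeneity $\Fanz(R\yvec)=R^{-(2\alpha+1)}\Fanz(\yvec)$ (immediate from $z_{k-1/2}[R\yvec]=R^{-1}z_{k-1/2}[\yvec]$), substituting $\xvec=R^n\yvec$ and $\xvec^{n-1}=R^{n-1}\yvec^{n-1}$ (the latter by induction), and multiplying by $(R^n)^{2\alpha+1}$, one sees that $\yvec^n$ minimizes $\frac{\delta(R^n)^{2\alpha+3}}{2\tau_n}\sum_k\big(y_k-\tfrac{R^{n-1}}{R^n}y_k^{n-1}\big)^2+\Fanz(\yvec)$. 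On the other hand, completing the square shows that the $\lambda=1$ functional $\frac\delta{2\widehat\tau_n}\sum_k(y_k-y_k^{n-1})^2+\Fanz(\yvec)+\tfrac12\delta\sum_k|y_k|^2$ has the same minimizer as $\frac{\delta(1+\widehat\tau_n)}{2\widehat\tau_n}\sum_k\big(y_k-\tfrac1{1+\widehat\tau_n}y_k^{n-1}\big)^2+\Fanz(\yvec)$. The two minimization problems coincide exactly iff $\tfrac{R^{n-1}}{R^n}=\tfrac1{1+\widehat\tau_n}$ and $\tfrac{(R^n)^{2\alpha+3}}{\tau_n}=\tfrac{1+\widehat\tau_n}{\widehat\tau_n}$, and a one-line computation shows both are equivalent to the recursion above. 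Since $\yvec^0=\xvec^0$, induction on $n$ closes the argument.

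With this in hand, I would apply \eqref{eq:CK} to $\yvec_\Delta$ with $\lambda=1$: as $\widehat\tau_n\le\tau$, writing $\widehat t_n:=\sum_{j=1}^n\widehat\tau_j$ one gets $\|w_\Delta^n-\um\|_{L^1(\Omega)}^2\le c_{\alpha,1}\big(\Haiz(\xvec^0)-\Haiz^{\operatorname{min}}\big)e^{-\frac2{1+\tau}\widehat t_n}$, where $\um$ is the discrete entropy minimizer of the confined problem with $\lambda=1$. Because $\xvec^n=R^n\yvec^n$, the reconstruction \eqref{eq:cf} gives $u_\Delta^n(x)=(R^n)^{-1}w_\Delta^n(x/R^n)$; putting $\bantn(x):=(R^n)^{-1}\um(x/R^n)$ --- a rescaled discrete Barenblatt profile --- and using the invariance of the $L^1$-norm under mass-preserving dilations, $\|u_\Delta^n-\bantn\|_{L^1(\Omega)}=\|w_\Delta^n-\um\|_{L^1(\Omega)}$; moreover $\Haiz(\xvec^0)-\Haiz^{\operatorname{min}}=\Hanz(\xvec_\Delta^0)-\Hanzm$ by the definition of the zero-confinement relative entropy (reference scale $R^0=1$) in section~\ref{sec:confn}. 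It remains to convert the exponential rate into the algebraic one: from $\widehat\tau_j=R^j/R^{j-1}-1$, the elementary bounds $x-1\ge\ln x$ and $x-1\le x\ln x$ for $x\ge1$, and $R^j/R^{j-1}=1+\widehat\tau_j\le1+\tau$, I obtain $\ln R^n\le\widehat t_n\le(1+\tau)\ln R^n$, hence $e^{-\frac2{1+\tau}\widehat t_n}\le(R^n)^{-2/(1+\tau)}$; and multiplying the recursion by $(R^j)^{2\alpha+2}$ and summing over $j\le n$ yields $(R^n)^{2\alpha+3}\ge1+a_\tau(2\alpha+3)t_n$ with $a_\tau:=(1+\tau)^{-(2\alpha+2)}$. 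With $b_\tau:=1+\tau$ and $R_\Delta^n:=\big(1+a_\tau(2\alpha+3)t_n\big)^{1/(b_\tau(2\alpha+3))}$ (so $a_\tau,b_\tau>0$ and $a_\tau,b_\tau\to1$ as $\tau\to0$) this gives $(R^n)^{-2/(1+\tau)}\le(R_\Delta^n)^{-2}$. Combining and taking square roots produces exactly the asserted bound with $c_\alpha:=\sqrt{c_{\alpha,1}}$.

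The crux is the second step: finding the correct discrete self-similar variables --- that the scaling factor must solve the implicit recursion $R^n-R^{n-1}=\tau_n(R^n)^{-(2\alpha+2)}$ and that the step sizes must be changed to $\widehat\tau_n=(R^n-R^{n-1})/R^{n-1}$ --- so that the completing-the-square identity puts $\yvec_\Delta$ \emph{exactly} on the confined scheme, leaving no remainder term to estimate. Everything afterwards is either a dilation argument or a careful but routine tracking of the $\tau$-dependent constants $a_\tau,b_\tau$; the only other point needing attention is checking, via section~\ref{sec:confn}, that the entropy entering \eqref{eq:CK} is indeed the quantity $\Hanz(\xvec_\Delta^0)-\Hanzm$ appearing in the statement.
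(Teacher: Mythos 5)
Your proof is correct, and at its core it follows the same strategy as the paper: a discrete self-similar change of variables mapping the zero-confinement minimizing movement onto the confined one, followed by the exponential decay estimate \eqref{eq:CK} for $\lambda=1$ and a conversion of the exponential rate in rescaled time into the algebraic rate $(R_\Delta^n)^{-1}$. Two points of execution differ, both in your favour. First, the direction of the rescaling: the paper starts from a solution of the \emph{confined} scheme with steps $\tau_n$ and produces a $\lambda=0$ solution with the induced steps $\taut_n=\tau_n\St^{n-1}(\St^n)^{2\alpha+2}$, so its final bound is naturally phrased in the rescaled time $\sh_n$ rather than in the time grid of the given $\lambda=0$ solution; you instead start from the given $\lambda=0$ solution and recover the confined steps $\widehat\tau_n$ by solving the implicit recursion $R^n-R^{n-1}=\tau_n(R^n)^{-(2\alpha+2)}$, which matches the theorem's hypotheses exactly --- your relations $R^n=(1+\widehat\tau_n)R^{n-1}$ and $\tau_n=\widehat\tau_n R^{n-1}(R^n)^{2\alpha+2}$ are precisely \eqref{eq:Sn} and \eqref{eq:partitiontaut} read backwards. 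Second, the scaling equivalence itself: the paper's Lemma \ref{lem:dmm_scaling} is proved by differentiating an auxiliary functional $g(\yvec,r)$ in $r$ and integrating, whereas your completing-the-square identity yields the same equivalence of minimization problems directly and makes transparent why both matching conditions reduce to your recursion. The remaining steps ($\widehat\tau_n\le\tau_n$, dilation invariance of the $L^1$ distance, $\widehat t_n\ge\ln R^n$, and $(R^n)^{2\alpha+3}\ge 1+a_\tau(2\alpha+3)t_n$ with $a_\tau=(1+\tau)^{-(2\alpha+2)}$) coincide with the paper's computation \eqref{eq:sntn}. Two cosmetic remarks: the quantity written $\Hanz(\xvec_\Delta^0)-\Hanzm$ in the statement is really the relative confined entropy $\Haiz(\xvec^0)-\Haiz(\xvecm)$ at reference scale $R^0=1$, an identification the paper's own proof also makes silently; and your $b_\tau=1+\tau$ versus the paper's $b_\tau=1+2\tau$ merely reflects which form of the Gronwall constant one carries over from Lemma \ref{lem:exp} --- both satisfy $b_\tau\to1$ as required.
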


Before we come to the analytical part of this paper, we want to point out the following:
The ideas for the proofs of Theorem \ref{thm:main2} and \ref{thm:main3} 
are mainly guided by the techniques developd in \cite{MMS}. 
The remarkable observation of this work is the fascinating structure preservation of our discretization,
which allows us to adapt nearly any calculation from the continuous theory for the discrete setting.

%
\subsection{Structure of paper}
In the following section \ref{sec:structure}, 
we point out some of the main structural features of equation \eqref{eq:fofo} and the functionals $\Hal$ and $\Fal$,
and show that our scheme rises from a discrete $L^2$-Wasserstein gradient flow,
so that many properties of the continuous flow are inherited.
Section \ref{sec:equi} treats the analysis of discrete equilibria in case of positive confinement $\lambda>0$: 
we prove convergence of discrete stationary states to 
Barenblatt-profiles or Gaussians, respectivelly, 
and analyse the asymptotics of discrete solutions for $\lambda=0$.
Finally, some numerical experiments are presented in section \ref{sec:num}.

%
\section{Structural properties --- continuous vs. discrete case}\label{sec:structure}
%

%
\subsection{Structural properties of equation \eqref{eq:fofo}}\label{sec:structure_cont}
The family of fourth order equations \eqref{eq:fofo}
carries a bunch of remarkable structural properies.
The most fundamental one is the conservation of mass, i.e. 
$t\mapsto \|u(t,\cdot)\|_{L^1(\Omega)}$ is a constant function for $t\in[0,+\infty)$ and attains the value $M:=\|u^0\|_{L^1(\Omega)}$.
This is a naturally given property, if one interprets solutions to \eqref{eq:fofo} as gradient flows
in the potential landscape of the perturbed information functional
\begin{align}
	\Fal(u) = \frac{1}{2\alpha}\intom\big(\partial_x u^\alpha\big)^2\dd x + \frac{\lambda}{2}\intom |x|^2 u(x) \dd x,
\end{align}
equipped with the $L^2$-Wasserstein metric $\wass$. 
As an immediate consequence, $\Fal$ is a Lyapunov functional, 
and one can find infinitely many other (formal) Lyapunov functionals 
at least for special choices of $\alpha$ --- 
see \cite{BLS,CCTdlss,JMalg} for $\alpha=\frac{1}{2}$ or \cite{BGruen,CaTothin,GiOt} for $\alpha=1$.
Apart from $\Fal$, one of the most important such Lyapunov functionals is given by
the $\La$-convex entropy
\begin{equation}
	\label{eq:entropy}
	\Hal(u) = \intom \phia(u)\dd x + \frac{\La}{2} \intom |x|^2 u(x) \dd x, \quad
	\phia(s):=\begin{cases}  \Theta_\alpha \frac{s^{\alpha+1/2}}{\alpha-1/2}, & \alpha\in(\tfrac{1}{2},1] \\
													 \Theta_{1/2} s\ln(s), &\alpha=\tfrac{1}{2} \end{cases}.
\end{equation}
It turns out that the functionals $\Fal$ and $\Hal$ are not just Lyapunov functionals,
but share numerous remarkable similiarities. One can indeed see \eqref{eq:fofo} 
as an higher order extension of the second order \emph{porous media/heat equation} \cite{JKO} 
\begin{align}
	\label{eq:heat}
	\partial_s v = -\thegrad\Hal(v) = -\Theta_\alpha\partial_{xx}(v^\alpha) + \La(xu)_x,
\end{align}
which is nothing less than the $L^2$-Wasserstein gradient flow of $\Hal$. 
Furthermore, the unperturbed functional $\Fan$, i.e. $\lambda=\La=0$, equals the dissipation of $\Han$ along its own gradient flow,
\begin{align}
	\label{eq:magic}
	\Fan(v(s)) = -\frac{\dd}{\dd s}\Han(v(s)).
\end{align}
In view of of the gradient flow structure, 
this relation makes equation \eqref{eq:fofo} the ``big brother'' of the porous media/heat equation \eqref{eq:heat}, 
see \cite{DMfourth,MMS} for structural consequences.
Another astonishing common feature is the correlation of $\Fal$ and $\Hal$ by 
the so-called \emph{fundamental entropy-information relation}: For any $u\in\dens$ with $\Hal(u)<\infty$, it holds
\begin{align}\label{eq:feir}
	\Fal(u) = |\thegrad\Hal|^2 + (2\alpha-1)\La\Hal(u),
	\quad\textnormal{for any } \lambda\geq 0,
\end{align}
see \cite[Corollary 2.3]{MMS}. This equation is a crucial tool for the analysis of equilibria of both functionals 
and the corresponding long-time behaviour of solutions to \eqref{eq:fofo} and \eqref{eq:heat}.

In addition to the above listing, a typical property of diffusion processes like \eqref{eq:fofo} or \eqref{eq:heat} with positive confinement $\lambda,\La>0$ is
the convergence towards unique stationary solutions $\uinf$ and $\vinf$, respectively, independent of the choice of initial data.
It is maybe one of the most surprising facts, that both equations \eqref{eq:fofo} and \eqref{eq:heat} share the same steady state, i.e. 
the stationary solutions $\uinf$ and $\vinf$ are identical. 
Those stationary states are solutions of the elliptic equations
\begin{align}\label{eq:ell}
	-\big(\prss(u)\big)_{xx} + \La (x u)_x = 0,
\end{align}
with $\prss(s):=\Theta_\alpha s^{\alpha+1/2}$, and have the form of Barenblatt profils or Gaussians, respectively,
see definition \eqref{eq:Barenblatt}\&\eqref{eq:Gaussian}.
This was first observed by Denzler and McCann in \cite{DMfourth}, and further studied in \cite{MMS} using the Wasserstein gradient flow structure of both
equations and their remarkable relation via \eqref{eq:magic}.

In case of $\alpha\in\{\frac{1}{2},1\}$, the mathematical literature is full of numerous results,
which is because of the physical importance of \eqref{eq:fofo} in those limiting cases. 

\subsubsection{DLSS euqation}
As already mentioned at the very beginning, 
the DLSS equation --- \eqref{eq:fofo} with $\alpha=\frac{1}{2}$ --- rises from the Toom model \cite{DLSS1,DLSS2} in one spatial dimension on 
the half-line $[0,+\infty)$, and was used to describe interface fluctuations, therein.
Moreover, the DLSS equation also finds application in semi-conductor physics, 
namely as a simplified model (low-temperature, field-free) for a quantum drift diffusion system for electron densities, see \cite{JPdlss}.

From the analytical point of view, a big variety of results in different settings has been developed in the last view decades.
For results on existence and uniqueness, we refer f.i. \cite{BLS,Funique,GJT,GST,JMdlss,JPdlss},
and \cite{CCTdlss,CTthin,CDGJ,GST,JMdlss,JTdecay,MMS} for qualitative and quantitative descriptions of the long-time behaviour.
The main reason, which makes the research on this topic so non-trivial, is a lack of comparison/maximum principles as 
in the theory of second order equations \eqref{eq:heat}.
And, unfortunatelly, the abscents of such analytical tools is not neglectable, as the work \cite{BLS} of Bleher et.al shows:
as soon as a solution $u$ of \eqref{eq:fofo} with $\alpha=\frac{1}{2}$ is strictly positive, one can show that it is even $C^\infty$-smooth,
but there are no regularity results available from the moment when $u$ touches zero.
The problem of strictly positivity of such solutions seems to be a difficult task, since it is still open. 
This is why alternative theories for non-negative weak solutions 
have more and more become matters of great interest, as f.i. an approach based on entropy methodes developed in \cite{GST,JMdlss}.

\subsubsection{Thin-film equation}
The thin-film equation --- \eqref{eq:fofo} with $\alpha=1$ --- is of similar physically importance as the DLSS equation,
since it gives a dimension-reduced description of the free-surface problem with the Navier-Stokes equation in the case of laminar flow, \cite{Oron}.
In case of linear mobility --- which is exactly the case in our situation --- the thin-film equation
can also be used to describe the pinching of thin necks in a \emph{Hele-Shaw} cell in one spatial dimension,
and thus plays an extraordinary role in physical applications. 
To this topic, the literature provides some interesting results in the framework of entropy methods, 
see \cite{Ulusoy,CaTothin,GiOt}.
In the (more generel) case of non-negative mobility functions $m$, i.e.
\begin{align}\label{eq:thinfilm}
	\partial_t u = -\operatorname{div}(m(u)\operatorname{D}\Delta u),
\end{align}
one of the first achievements to this topic available in the mathematical literature was done by Bernis and Friedman \cite{BernisF}.
The same equation is observed in \cite{Bertsch}, treating a vast number of results to numerous mobility functions of physical meaning.
There are several other references in this direction, f.i. Grün et. al \cite{BGruen,Passo,Gruen},
concerning long-time behaviour of solutions and the non-trivial question of spreading behaviour of the support.

%
\subsection{Structure-preservation of the numerical scheme}\label{sec:structure_discrete}
In this section, we try to get a better intuition of the scheme in section \ref{sec:scheme}.
Foremost we will derive \eqref{eq:dgf} as a discrete system of Euler-Lagrange equations
of a variational problem that rises from a $L^2$-Wasserstein gradient flow restricted on a 
discrete submanifold $\densN$ of the space of probability measures $\dens$ on $\Omega$.
This is why the numerical scheme holds several discrete analogues of the results discussed in the previous section.
As the following section shows, some of the inherited properties are obtained by construction (f.i. preservation of mass and dissipation of the entropy),
where others are caused by the underlying dicsrete gradient flow structure and a smart choice of 
a discrete $L^2$-Wasserstein distance.
Moreover it is possible to prove that the entropy and the information functional share the same minimizer $\xvecm$ even in the discrete case, 
and solutions of the discrete gradient flow converges with an exponential rate to this stationary state. 
The prove of this observation is more sophisticated, 
that is why we dedicate an own section (section \ref{sec:equi}) to the treatment of this special property.

\subsubsection{Ansatz space and discrete entropy/information functionals}
The entropies $\Hal$ and $\Fal$ as defined in \eqref{eq:entropy}\&\eqref{eq:info} are non-negative functionals on $\dens$.
If we first consider the zero-confinement case $\lambda=0$,
one can derive in analogy to \cite{dde} the discretization in \eqref{eq:Halz} of $\Han$ just by restriction 
to a finite-dimensional submanifold $\densN$ of $\dens$:
For fixed $K\in\setN$, the set $\densN$ consists of all local constant density functions
$u=\cf_\theh[\xvec]$ (remember definition \eqref{eq:cf}), such that $\xvec\in\setR^{K+1}$ is a monotone vector, i.e.
\begin{align*}
  \xvec\in\xseqN := \big\{ (x_0,\ldots,x_K) \,\big|\, x_0 < x_1 < \ldots < x_{K-1} < x_K \big\} \subseteq \setR^{K+1}.
\end{align*}
Such density functions $u=\cf_\theh[\xvec]\in\densN$ bear a one-to-one relation to their \emph{Lagrangians} or \emph{Lagrangian maps},
which are defefined on the mass grid $[0,M]$ with uniform decomposition $(0=\xi_0,\ldots,\xi_k,\ldots,\xi_K=M)$.
More precicely, we define for $\xvec\in\xseqN$ the local affine and monotonically increasing function $\cX=\cX_\theh[\xvec]:[0,M]\to\Omega$,
such that $\cX(\xi_k)=x_k$ for any $k=0,\ldots,K$. It then holds $u\circ\cX=\frac{1}{\cX_\xi}$ for $u\in\densN$ and its corresponding 
Lagrangian map.
For later analysis, we introduce in addition to the decomposition $\{\xi_k\}_{k=0}^K$ the intermediate values 
$(\xi_{k-\frac{1}{2}},\xi_{\frac{3}{2}},\ldots,\xi_{K-\frac{1}{2}})$ by $\xi_\kmh=\frac{1}{2}(\xi_k+\xi_{k-1})$ for $k=1,\ldots,K$.

In view of the entropy's discretization, this implies using \eqref{eq:Halz}\&\eqref{eq:entropy}, a change of variables $x=\cX_\theh[\xvec]$, 
and the definition \eqref{eq:zvec} of the $\xvec$-dependent vectors $\zvec$
\begin{align*}
  \Hanz(\xvec) = \Han(\cf_\theh[\xvec]) 
  = \int_\Omega \phia\big(\cf_\theh[\xvec]\big)\dd x
  = \delta\sum_{k=1}^K \fa(z_\kmh),
\end{align*}
which is perfectly compatible with \eqref{eq:Halz}.
Obviously, one cannot derive the discrete information functional $\Fanz$ in the same way, since $\Fan$ is not defined on $\densN$.
So instead of restriction, we mimic property \eqref{eq:magic} that is for any $\xvec\in\xseqN$
\begin{align*}
	\Fanz(\xvec) = \delta^{-1}\grad\Hanz(\xvec)^T\grad\Hanz(\xvec) = \spr{\wgrad\Hanz(\xvec)}{\wgrad\Hanz(\xvec)}.
\end{align*}
Here, the $k$th component of $\grad f(\xvec)$ holds $[\grad f(\xvec)]_k=\partial_{x_k} f(\xvec)$ for any $k=0,\ldots,K$ and arbitrary function $f:\xseqN\to\setR$. 
Moreover, we set $\wgrad f(\xvec) = \delta^{-1}\grad f(\xvec)$ and introduce for $\vvec,\wvec\in\setR^{K+1}$ 
the scalar product $\spr{\cdot}{\cdot}$ by
\begin{align*}
  \spr{\vvec}{\wvec} = \delta\sum_{k=0}^{K}v_kw_k, 
	\quad\textnormal{with induced norm}\quad \nrm{\vvec} = \sqrt{\spr{\vvec}{\vvec}}.
\end{align*}
\begin{xmp}
Each component $z_\kappa$ of $\zvec=\cz_\theh[\xvec]$ is a function on $\xseqN$, 
and
\begin{align}
	\label{eq:zrule}
	\grad z_\kappa = -z_\kappa^2\,\frac{\ee_{\kappa+\frac12}-\ee_{\kappa-\frac12}}{\delta},
\end{align}
where we denote for $k=0,\ldots,K$ by $\ee_k\in\setR^{K+1}$ the $(k+1)$th canonical unit vector.
\end{xmp}
\begin{rmk}
One of the most fundamental properties of the $L^2$-Wasserstein metric $\wass$ on $\dens$ in one space dimension is 
its excplicit representation in terms of Lagrangian coordinates. 
We refer \cite{AGS,VilBook} for a comprehensive introduction to the topic.
This enables to prove the existence of $K$-independent constants $c_1,c_2>0$, such that
\begin{align}\label{eq:metricequivalent}
	c_1\nrm{\xvec-\yvec} \leq \wass(\cf_\theh[\xvec],\cf_\theh[\yvec]) \leq c_2\nrm{\xvec-\yvec},\quad\textnormal{for all } \xvec,\yvec\in\xseqN.
\end{align}
A proof of this statement for $\Omega=[a,b]\subset(-\infty,+\infty)$ is given in \cite[Lemma 7]{dde},
and can be easily recomposed for $\Omega=\setR$.
\end{rmk}
Let us further introduce the sets of (semi)-indizes 
\begin{align*}
  \ival = \{0,1,\ldots,K\},\quad\text{and}\quad
  \hval = \Big\{\frac12,\frac32,\ldots,K-\frac12\Big\}.
\end{align*}
The calculation \eqref{eq:zrule} in the above example yields the expizit representation of the gradient $\grad\Halz(\xvec)$,
\begin{equation}\begin{split}\label{eq:gradH}
	\grad\Hanz(\xvec) = \Theta_\alpha\delta\sum_{\kappa\in\hval} z_\kappa^{\alpha+\frac{1}{2}} \frac{\ee_{\kappm}-\ee_{\kappp}}\delta ,
\end{split}\end{equation}
and further of the discretized information functional
\begin{align*}
	\Fanz(\xvec) = \nrm{\wgrad\Hanz(\xvec)}^2
	= \Theta_\alpha^2\delta\sum_{k\in\ival}\left(\frac{z_\kph^{\alpha+\frac{1}{2}}-z_\kmh^{\alpha+\frac{1}{2}}}{\delta}\right)^2.
\end{align*}

In the case of positive confinemend $\lambda>0$, we note that the drift potential $u\mapsto\intom|x|^2u(x)\dd x$ 
holds an equivalent representation in terms of Lagrangian coordinates, that is namely $\cX\mapsto\int_0^M |\cX(\xi)|^2\dd\xi$.
In our setting, the simplest discretization of this functional is hence by summing-up over all values $x_k$ weighted with $\delta$.
This yields
\begin{align*}
	\Halz(\xvec) = \Hanz(\xvec) + \frac{\La}{2}\delta\sum_{k\in\ival}|x_k|^2,
	\quad\textnormal{and}\quad
	\Falz(\xvec) = \Fanz(\xvec) + \frac{\lambda}{2}\delta\sum_{k\in\ival}|x_k|^2
\end{align*}
as an extension to the case of positive $\lambda$, which is nothing else than \eqref{eq:Halz}\&\eqref{eq:Falz}.
Note in addition, that $\delta\sum_{k\in\ival}|x_k|^2 = \nrm{\xvec}^2$.

A first structural property of the above simple discretization is
convecity retention from the continuous to the discrete setting:
\begin{lem}\label{lem:Lconvex}
The functional $\xvec\mapsto\Halz$ is $\La$-convex, i.e.
\begin{align}\label{eq:Lconvex}
	\Halz\big((1-s)\xvec + s\yvec\big) \leq (1-s)\Halz(\xvec) + s\Halz(\yvec) - \frac{\La}{2}(1-s)s \nrm{\xvec-\yvec}^2
\end{align}
for any $\xvec,\yvec\in\xseqN$ and $s\in(0,1)$. It therefore admits a unique minimizer $\xvecm\in\xseqN$.
If we further assume $\La>0$, then it holds for any $\xvec\in\xseqN$
\begin{align}\label{eq:HleqF}
	\frac{\La}{2}\nrm{\xvec-\xvecm}^2 \leq \Halz(\xvec) - \Halz(\xvecm) \leq \frac{1}{2\La}\nrm{\wgrad\Halz(\xvec)}^2.
\end{align}
\end{lem}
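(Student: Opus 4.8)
The plan is to peel off the confinement term and reduce the $\La$--convexity of $\Halz$ to plain convexity of its zero--confinement part. Writing $\Halz(\xvec)=\Hanz(\xvec)+\tfrac{\La}{2}\nrm{\xvec}^2$ with $\Hanz(\xvec)=\delta\sum_{k=1}^K\fa(z_\kmh)$ (recall $\nrm{\xvec}^2=\delta\sum_{k\in\ival}|x_k|^2$), the elementary Hilbert--space identity
\begin{align*}
	\nrm{(1-s)\xvec+s\yvec}^2=(1-s)\nrm{\xvec}^2+s\nrm{\yvec}^2-s(1-s)\nrm{\xvec-\yvec}^2
\end{align*}
shows that \eqref{eq:Lconvex} is \emph{equivalent} to convexity of $\xvec\mapsto\Hanz(\xvec)$ on $\xseqN$. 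So the one substantial step is to establish that $\Hanz$ is convex.

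Here I would use that $\Hanz(\xvec)=\delta\sum_{k=1}^Kg_\alpha(x_k-x_{k-1})$ with $g_\alpha(r):=\fa(\delta/r)$, so that (as $\delta>0$, as $(x_{k-1},x_k)\mapsto x_k-x_{k-1}$ is affine, and as convexity is preserved under affine precomposition and under summation) it suffices to check that each $g_\alpha$ is convex on $(0,\infty)$. A direct differentiation does this: for $\alpha\in(\tfrac12,1]$ one has $g_\alpha(r)=\tfrac{\Theta_\alpha\delta^{\alpha-1/2}}{\alpha-1/2}\,r^{-(\alpha-1/2)}$ with positive prefactor and exponent $\alpha-\tfrac12\in(0,\tfrac12]$, hence $g_\alpha''(r)=\Theta_\alpha\delta^{\alpha-1/2}(\alpha+\tfrac12)\,r^{-(\alpha+3/2)}>0$; for $\alpha=\tfrac12$ one has $g_{1/2}(r)=\Theta_{1/2}(\ln\delta-\ln r)$, hence $g_{1/2}''(r)=\Theta_{1/2}\,r^{-2}>0$. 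Combined with the identity above, this is precisely \eqref{eq:Lconvex}.

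For $\La>0$ the inequality \eqref{eq:Lconvex} is strict whenever $\xvec\neq\yvec$, so $\Halz$ has at most one minimizer. Existence I would obtain from coercivity: $\Halz$ is continuous on the open convex set $\xseqN$, it tends to $+\infty$ whenever some increment $x_k-x_{k-1}\to0^+$ (then $z_\kmh\to\infty$ and $\fa(z_\kmh)\to+\infty$), and it tends to $+\infty$ as $\nrm{\xvec}\to\infty$ within $\xseqN$ because the term $\tfrac{\La}{2}\nrm{\xvec}^2$ dominates — in the case $\alpha=\tfrac12$ one first uses concavity of $\ln$ to bound $\Hanz$ from below by $-C\log(x_K-x_0)$, which is absorbed by the quadratic term. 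Hence the nonempty sublevel sets of $\Halz$ are compact subsets of $\xseqN$, and a unique minimizer $\xvecm\in\xseqN$ exists.

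It remains to prove \eqref{eq:HleqF}, which is the familiar dictionary between $\La$--strong convexity and a Polyak--{\L}ojasiewicz estimate, applied in the Hilbert space $(\setR^{K+1},\spr{\cdot}{\cdot})$, for which $\wgrad\Halz(\xvec)$ is the gradient (so directional derivatives of $\Halz$ read $\spr{\wgrad\Halz(\xvec)}{\,\cdot\,}$). Since $\Halz$ is smooth on $\xseqN$, dividing \eqref{eq:Lconvex} by $s$ and letting $s\to0^+$ gives the first--order bound
\begin{align*}
	\Halz(\yvec)\geq\Halz(\xvec)+\spr{\wgrad\Halz(\xvec)}{\yvec-\xvec}+\tfrac{\La}{2}\nrm{\xvec-\yvec}^2\qquad\text{for every }\xvec,\yvec\in\xseqN.
\end{align*}
Evaluating this at the interior minimizer $\xvec=\xvecm$, where $\wgrad\Halz(\xvecm)=0$, yields $\Halz(\yvec)-\Halz(\xvecm)\geq\tfrac{\La}{2}\nrm{\yvec-\xvecm}^2$, i.e. the left inequality of \eqref{eq:HleqF}. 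For the right one, take $\yvec=\xvecm$ in the first--order bound and estimate the resulting quadratic in $\xvecm-\xvec$ from below by its unconstrained minimum, $\spr{\wgrad\Halz(\xvec)}{\xvecm-\xvec}+\tfrac{\La}{2}\nrm{\xvecm-\xvec}^2\geq-\tfrac{1}{2\La}\nrm{\wgrad\Halz(\xvec)}^2$, giving $\Halz(\xvecm)\geq\Halz(\xvec)-\tfrac{1}{2\La}\nrm{\wgrad\Halz(\xvec)}^2$. The only genuinely delicate point in all of this is the coercivity/compactness argument guaranteeing that $\xvecm$ exists (in particular the $\alpha=\tfrac12$ logarithm, where $\fa$ is unbounded below near $z=0$); the convexity computation and the two one--line estimates are routine once that is in place.
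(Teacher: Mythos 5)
Your proposal is correct and follows essentially the same route as the paper: $\La$-convexity is reduced via the parallelogram identity to convexity of the zero-confinement part $\Hanz$ (your direct check that $r\mapsto\fa(\delta/r)$ is convex is the same computation as the paper's change of variables to $\int_0^M\psia(\cX_\xi)\dd\xi$), and both inequalities in \eqref{eq:HleqF} are obtained from the first-order consequence of \eqref{eq:Lconvex} exactly as in the paper. The only substantive difference is that you supply a self-contained coercivity/compactness argument for the existence of $\xvecm$ (correctly restricted to $\La>0$), where the paper simply cites \cite[Proposition 10]{dde}.
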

\begin{proof}
If we prove \eqref{eq:Lconvex}, then the existence of the unique minimizer is a consequence \cite[Proposition 10]{dde}. 
By definition and a change of variables, we get for $\alpha\in(\frac{1}{2},1]$
\begin{align*}
  \Hanz(\xvec) &= \Han(\cf_\theh[\xvec]) 
	=\int_0^M \psia\left(\cX_\theh[\xvec]_\xi\right) \dd\xi ,\textnormal{ with }
	\psia(s) = \begin{cases} \Theta_\alpha \frac{s^{1/2-\alpha}}{\alpha-1/2}, & \alpha\in(\tfrac{1}{2},1] \\
													 -\Theta_{1/2}\ln(s), & \alpha=\tfrac{1}{2} \end{cases},
\end{align*}
hence $\xvec\mapsto\Hanz(\xvec)$ is convex. Since the functional $\xvec\mapsto\nrm{\xvec}^2$ holds trivially
\begin{align*}
	\nrm{(1-s)\xvec+s\yvec}^2 
	\leq (1-s)\nrm{\xvec}^2 + s\nrm{\yvec}^2 - (1-s)s\nrm{\xvec-\yvec}^2
\end{align*}
for any $\xvec,\yvec\in\xseqN$ and $s\in(0,1)$, the functionals $\Halz(\xvec) = \Hanz(\xvec) + \frac{\La}{2}\nrm{\xvec}^2$ hold \eqref{eq:Lconvex}.

Deviding \eqref{eq:Lconvex} by $s>0$ and passing to the limit as $s\downarrow0$ yields
\begin{align*}
	\Halz(\xvec) - \Halz(\yvec) \leq \grad\Halz(\xvec)(\xvec-\yvec) - \frac{\La}{2}\nrm{\xvec-\yvec}^2.
\end{align*}
The second inequality of \eqref{eq:HleqF} easily follows from Young's inequality $|ab|\leq \eps|a|^2+(2\eps)^{-1}\frac{1}{2}|b|^2$ with $\eps=(2\delta\La)^{-1}$,
and even holds for arbitrary $\yvec\in\xseqN$.

To get the first inequaltiy of \eqref{eq:HleqF}, we set $\xvec=\xvecm$ and again devide \eqref{eq:Lconvex} by $s>0$, then
\begin{align*}
	\frac{\Halz\big((1-s)\xvecm + s\yvec\big) - \Halz(\xvecm)}{s} \leq \Halz(\yvec) - \Halz(\xvecm) - \frac{\La}{2}(1-s) \nrm{\xvecm-\yvec}^2,
\end{align*}
where the left hand side is obviously non-negative for any $s>0$. Since $s>0$ was arbitrary, the statement is proven.
\end{proof}

As a further conclusion of our natural discretization, we get a \emph{discrete fundamental entropy-information relation} analogously 
to the continuous case \eqref{eq:feir}.
\begin{cor}\label{cor:dfeir}
For any $\lambda\geq0$, every $\xvec\in\xseqN$ with $\Hanz(\xvec)<\infty$ we have
\begin{align}\label{eq:dfeir}
	\Falz(\xvec) &= \nrm{\wgrad\Halz(\xvec)}^2 + (2\alpha-1)\La\Halz(\xvec), \quad\textnormal{ for } \alpha\in(\tfrac{1}{2},1] \quad\textnormal{and}\\
	\Fhlz(\xvec) &= \nrm{\wgrad\Hhlz(\xvec)}^2 + \Lh , \quad\textnormal{ for } \alpha=\tfrac{1}{2}
\end{align}
\end{cor}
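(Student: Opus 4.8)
The plan is to expand the squared norm $\nrm{\wgrad\Halz(\xvec)}^2$ into the confinement‑free pieces $\Fanz(\xvec)$, $\Hanz(\xvec)$ and $\nrm{\xvec}^2$, and then to eliminate the mixed term by a homogeneity (Euler‑type) identity for $\Hanz$. First I would note that on the open set $\xseqN$ all the values $z_\kmh=\delta/(x_k-x_{k-1})$ are strictly positive, so $\Halz$ is smooth there and the hypothesis $\Hanz(\xvec)<\infty$ is automatic. Since $\Halz(\xvec)=\Hanz(\xvec)+\tfrac{\La}{2}\nrm{\xvec}^2$ and $\grad(\nrm{\xvec}^2)=2\delta\xvec$, one has $\wgrad\Halz(\xvec)=\wgrad\Hanz(\xvec)+\La\xvec$, hence
\begin{align*}
	\nrm{\wgrad\Halz(\xvec)}^2 = \nrm{\wgrad\Hanz(\xvec)}^2 + 2\La\,\spr{\wgrad\Hanz(\xvec)}{\xvec} + \La^2\nrm{\xvec}^2 ,
\end{align*}
and the first term on the right equals $\Fanz(\xvec)$ by the very construction of the discrete information functional recalled in Section \ref{sec:structure_discrete}.

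The heart of the matter is the identity
\begin{align*}
	\spr{\wgrad\Hanz(\xvec)}{\xvec} =
	\begin{cases} -(\alpha-\tfrac12)\,\Hanz(\xvec), & \alpha\in(\tfrac12,1], \\ -\tfrac{M}{2}, & \alpha=\tfrac12. \end{cases}
\end{align*}
I would obtain it from the scaling $\xvec\mapsto t\xvec$: because $z_\kmh(t\xvec)=t^{-1}z_\kmh(\xvec)$ and $\fa$ is a pure power (resp.\ a logarithm), the map $t\mapsto\Hanz(t\xvec)$ equals $t^{-(\alpha-1/2)}\Hanz(\xvec)$ for $\alpha>\tfrac12$ and $\Hanz(\xvec)-\tfrac{M}{2}\ln t$ for $\alpha=\tfrac12$ (using $M=K\delta$ and $\Theta_{1/2}=\tfrac12$); differentiating at $t=1$ gives the claim. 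Equivalently, one may argue by a discrete summation by parts from the explicit gradient \eqref{eq:gradH}, which yields $\spr{\wgrad\Hanz(\xvec)}{\xvec}=\Theta_\alpha\sum_{\kappa\in\hval}z_\kappa^{\alpha+1/2}(x_{\kappm}-x_{\kappp})=-\Theta_\alpha\delta\sum_{\kappa\in\hval}z_\kappa^{\alpha-1/2}$, which coincides with $-(\alpha-\tfrac12)\Hanz(\xvec)$ (resp.\ $-\tfrac{M}{2}$) after comparison with the explicit form of $\Hanz$.

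Finally I would substitute this back and use $\lambda=(2\alpha+1)\La^2$. For $\alpha\in(\tfrac12,1]$,
\begin{align*}
	\nrm{\wgrad\Halz(\xvec)}^2 + (2\alpha-1)\La\Halz(\xvec) &= \Fanz(\xvec) - (2\alpha-1)\La\Hanz(\xvec) + \La^2\nrm{\xvec}^2 \\
	&\quad + (2\alpha-1)\La\Hanz(\xvec) + \tfrac{2\alpha-1}{2}\La^2\nrm{\xvec}^2 ,
\end{align*}
the two $\Hanz$‑contributions cancel, and $\La^2+\tfrac{2\alpha-1}{2}\La^2=\tfrac{2\alpha+1}{2}\La^2=\tfrac{\lambda}{2}$, so the right‑hand side is $\Fanz(\xvec)+\tfrac{\lambda}{2}\nrm{\xvec}^2=\Falz(\xvec)$, which is \eqref{eq:dfeir}. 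For $\alpha=\tfrac12$ the $(2\alpha-1)$‑term is absent, and with $M=1$ and $\lambda=2\Lh^2$ one gets $\nrm{\wgrad\Hhlz(\xvec)}^2=\Fhnz(\xvec)-\Lh+\tfrac{\lambda}{2}\nrm{\xvec}^2=\Fhlz(\xvec)-\Lh$, i.e.\ the second stated identity. The only genuinely structural input is the homogeneity identity above; the rest is bookkeeping with the constants $\Theta_\alpha$, $\La$ and $\lambda$, which I expect to be the main place where a normalization slip could occur.
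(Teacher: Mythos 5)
Your proof is correct and follows essentially the same route as the paper: both expand $\nrm{\wgrad\Halz(\xvec)}^2$ into the unperturbed part, a cross term, and $\La^2\nrm{\xvec}^2$, and both evaluate the cross term via the same summation-by-parts identity $\Theta_\alpha\sum_{\kappa}z_\kappa^{\alpha+1/2}(x_{\kappa-1/2}-x_{\kappa+1/2})=-\Theta_\alpha\delta\sum_{\kappa}z_\kappa^{\alpha-1/2}$ before the same bookkeeping with $\lambda=(2\alpha+1)\La^2$. Your packaging of that cross term as an Euler/homogeneity identity for $\Hanz$ is a pleasant conceptual gloss, but it is computationally identical to the paper's argument.
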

\begin{rmk}
Note that the above seemingly appearing discontinuity at $\alpha=\frac{1}{2}$ is not real. 
For $\alpha>\frac{1}{2}$, the second term in right hand side of \eqref{eq:dfeir}
is explicitly given by
\begin{align*}
	(2\alpha-1)\La\Halz(\xvec) 
	&= (2\alpha-1)\La\left(\Theta_\alpha\delta\sum_{\kappa\in\hval}\frac{z_\kappa^{\alpha-1/2}}{\alpha-1/2} + \frac{\La}{2}\nrm{\xvec}^2\right) \\
	&= 2\La\Theta_\alpha\delta\sum_{\kappa\in\hval}z_\kappa^{\alpha-1/2} + (2\alpha-1)\frac{\La}{2}\nrm{\xvec}^2,
\end{align*}
For $\alpha\downarrow\frac{1}{2}$, one gets $\La\to\Lh$, $\Theta_\alpha\to\frac{1}{2}$ and especially $\delta\sum_{\kappa\in\hval}z_\kappa^{\alpha-1/2}\to M = 1$,
The drift-term vanishes since $(2\alpha-1)\to0$.
\end{rmk}
\begin{proof2}{Corollary \ref{cor:dfeir}}
Let us first assume $\alpha\in(\frac{1}{2},1]$.
A straight-forward calculation using the definition of $\nrm{.}$, $\wgrad$ and $\grad\Halz$ in \eqref{eq:gradH} yields 
\begin{equation}\begin{split}\label{eq:nrmHalz}
  \nrm{\wgrad\Halz(\xvec)}^2  &= \delta^{-1}\left\langle\grad\Halz(\xvec),\grad\Halz(\xvec)\right\rangle\\
	&= \nrm{\wgrad\Hanz(\xvec)}^2
			- 2\Theta_\alpha\La\delta\sum_{\kappa\in\hval} z_\kappa^{\alpha-\frac{1}{2}}
			+ \La^2\delta\sum_{k\in\ival} |x_k|^2.
\end{split}\end{equation}
Here we used the explicit representation of $\grad\Halz(\xvec)$, remember \eqref{eq:gradH},
\begin{align*}
		\grad\Halz(\xvec) = \Theta_\alpha\delta\sum_{\kappa\in\hval} z_\kappa^{\alpha+\frac{1}{2}} \frac{\ee_{\kappm}-\ee_{\kappp}}\delta
				+ \La\delta\sum_{k\in\ival}x_k\ee_k,
\end{align*}
and especially the definition of \eqref{eq:zvec}, which yields
\begin{align*}
	\delta^{-1}\left\langle \Theta_\alpha\delta\sum_{\kappa\in\hval} z_\kappa^{\alpha+\frac{1}{2}} \frac{\ee_{\kappm}-\ee_{\kappp}}\delta 
	,\La\delta\sum_{k\in\ival}x_k\ee_k\right\rangle
	&= \Theta_\alpha\La\delta\sum_{\kappa\in\hval}z_\kappa^{\alpha+\frac{1}{2}} \frac{x_\kappm-x_\kappp}\delta \\
	&= -\Theta_\alpha\La\delta\sum_{\kappa\in\hval}z_\kappa^{\alpha-\frac{1}{2}}
\end{align*}
Since $\alpha\neq\frac{1}{2}$, we can write 
$2\Theta_\alpha=(2\alpha-1)\frac{\Theta_\alpha}{\alpha-1/2}$.
Further note that the relation $\La=\sqrt{\lambda/(2\alpha+1)}$ yields
\begin{align*}
	\La^2 = \frac{\lambda}{2\alpha+1} = \frac{\lambda}{2}\left(\frac{1}{\alpha+1/2}\right)
				= \frac{\lambda}{2}\left(1 - \frac{\alpha-1/2}{\alpha+1/2}\right)
				= \frac{\lambda}{2}\left(1 - \frac{2\alpha-1}{2\alpha+1}\right).
\end{align*}
Using this information and the definition of $\Han$, we proceed in the above calculations by
\begin{align*}
	\nrm{\wgrad\Halz(\xvec)}^2  
	&= \Fanz(\xvec) - (2\alpha-1)\La\Hanz(\xvec) + \frac{\lambda}{2}\left(1 - \frac{2\alpha-1}{2\alpha+1}\right)\delta\sum_{k\in\ival} |x_k|^2 \\
	&= \Fanz(\xvec) - (2\alpha-1)\La\Hanz(\xvec) + \frac{\lambda}{2}\delta\sum_{k\in\ival} |x_k|^2 
																- (2\alpha-1)\frac{\La^2}{2}\delta\sum_{k\in\ival} |x_k|^2 \\
	&= \Falz(\xvec) - (2\alpha-1)\La\Halz(\xvec).
\end{align*}
In case of $\alpha=\frac{1}{2}$, we see that $\Theta_{1/2} = \frac{1}{2}$, and $\Lh=\sqrt{\lambda/2}$. We hence conclude in \eqref{eq:nrmHalz}
\begin{align*}
	\nrm{\wgrad\Hhlz(\xvec)}^2  
	=\nrm{\wgrad\Hhnz(\xvec)}^2
			- \Lh\delta\sum_{\kappa\in\hval} z_\kappa^0
			+ \frac{\lambda}{2}\delta\sum_{k\in\ival} |x_k|^2
	= \Falz(\xvec) - \Lh
\end{align*}
\end{proof2}

For the following reason, the above representation of $\Falz$ is indeed a little miracle:
From a naive point of view, one would ideally hope to gain a discrete counterpart of the 
fundamental entropy-information relation \eqref{eq:feir}, 
if one takes the one-to-one discretization of the $L^2$-Wasserstein metric, 
which is (in the language of Lagrangian vectors) realized by the norm 
$\xvec\mapsto \wass(\cf_\theh[\xvec],\cf_\theh[\xvec])$ instead of our simpler choice $\xvec\mapsto\nrm{\xvec}$. 
Indeed, with this ansatz, the above proof would fail
in the moment in which one tries to calculate the scalar product of $\grad\Hanz$ and $\grad\wass(\cf_\theh[\xvec],\cf_\theh[\xvec])$.
This is why our discretization of the $L^2$-Wasserstein metric by the norm $\nrm{\cdot}$
seems to be the right choice, if one is interested in a structure-preserving discretization.

\begin{cor}
The unique minimizier $\xvecm\in\xseqN$ of $\Halz$ is a minimizer of $\Falz$ and it holds for any $\xvec\in\xseqN$
\begin{align}\label{eq:FleqgradH}
	\Falz(\xvec) - \Falz(\xvecm) \leq \frac{2\alpha+1}{2}\nrm{\wgrad\Halz(\xvec)}^2.
\end{align}
\end{cor}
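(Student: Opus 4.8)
The plan is to read the statement off the discrete fundamental entropy-information relation (Corollary~\ref{cor:dfeir}) combined with the two-sided bound \eqref{eq:HleqF} of Lemma~\ref{lem:Lconvex}; essentially no new computation is required. As a preliminary observation, note that $\xseqN$ is open and convex in $\setR^{K+1}$ and that $\Halz$ is smooth there (the components $z_\kmh$ stay strictly positive), so the unique minimizer $\xvecm$ furnished by Lemma~\ref{lem:Lconvex} is an interior critical point; hence $\grad\Halz(\xvecm)=0$, that is $\wgrad\Halz(\xvecm)=0$ and $\nrm{\wgrad\Halz(\xvecm)}^2=0$.

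To see that $\xvecm$ minimizes $\Falz$, I would, for $\alpha\in(\tfrac12,1]$, subtract relation \eqref{eq:dfeir} at $\xvecm$ from the same relation at an arbitrary $\xvec\in\xseqN$ and use the preliminary observation, obtaining
\[
  \Falz(\xvec)-\Falz(\xvecm)=\nrm{\wgrad\Halz(\xvec)}^2+(2\alpha-1)\La\big(\Halz(\xvec)-\Halz(\xvecm)\big).
\]
Each term on the right is non-negative, since $2\alpha-1\geq0$, $\La\geq0$ and $\Halz(\xvec)\geq\Halz(\xvecm)$, so $\Falz(\xvec)\geq\Falz(\xvecm)$ for every $\xvec$. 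For $\alpha=\tfrac12$ the second identity in Corollary~\ref{cor:dfeir} yields, after the constant $\Lh$ cancels, $\Fhlz(\xvec)-\Fhlz(\xvecm)=\nrm{\wgrad\Hhlz(\xvec)}^2\geq0$, so $\xvecm$ is again a minimizer.

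For the quantitative bound \eqref{eq:FleqgradH} with $\alpha\in(\tfrac12,1]$, I would insert the second inequality of \eqref{eq:HleqF}, namely $\Halz(\xvec)-\Halz(\xvecm)\leq\frac{1}{2\La}\nrm{\wgrad\Halz(\xvec)}^2$, into the displayed identity to get
\[
  \Falz(\xvec)-\Falz(\xvecm)\leq\nrm{\wgrad\Halz(\xvec)}^2+\frac{2\alpha-1}{2}\nrm{\wgrad\Halz(\xvec)}^2=\frac{2\alpha+1}{2}\nrm{\wgrad\Halz(\xvec)}^2,
\]
while for $\alpha=\tfrac12$ the identity $\Fhlz(\xvec)-\Fhlz(\xvecm)=\nrm{\wgrad\Hhlz(\xvec)}^2$ is already \eqref{eq:FleqgradH} with equality, as $\frac{2\alpha+1}{2}=1$ in that case. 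I do not anticipate any genuine difficulty; the only points meriting a line of justification are the vanishing of $\wgrad\Halz(\xvecm)$ (interior minimizer of a smooth $\La$-convex functional on the open set $\xseqN$) and the fact that \eqref{eq:HleqF} presupposes $\La>0$, so that the estimate is understood for $\lambda>0$ --- for $\lambda=0$ one has $\La=0$, $\Falz=\nrm{\wgrad\Hanz(\cdot)}^2$ directly from the relation, and \eqref{eq:FleqgradH} holds trivially because $\frac{2\alpha+1}{2}\geq1$ for $\alpha\in[\tfrac12,1]$.
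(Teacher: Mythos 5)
Your proposal is correct and follows essentially the same route as the paper: both derive the identity $\Falz(\xvec)-\Falz(\xvecm)=\nrm{\wgrad\Halz(\xvec)}^2+(2\alpha-1)\La\big(\Halz(\xvec)-\Halz(\xvecm)\big)$ from the discrete entropy--information relation \eqref{eq:dfeir}, read off minimality of $\xvecm$ from the non-negativity of both terms, and then insert the second inequality of \eqref{eq:HleqF} to obtain the constant $1+\tfrac{2\alpha-1}{2}=\tfrac{2\alpha+1}{2}$. Your extra remarks on the interior criticality of $\xvecm$, the cancellation of $\Lh$ when $\alpha=\tfrac12$, and the degenerate case $\lambda=0$ are sound refinements of details the paper leaves implicit.
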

\begin{proof}
Equality \eqref{eq:dfeir} and $2\alpha-1\geq0$ shows, that $\xvec\mapsto\Falz(\xvec)$ is minimal, iff $\nrm{\wgrad\Halz(\xvec)}=0$ and $\Halz(\xvec)$ is minimal,
which is the case for $\xvec=\xvecm$. The representaion in \eqref{eq:dfeir} further implies
\begin{align*}
	&\Falz(\xvec)-\Falz(\xvecm) \\
	= &\nrm{\wgrad\Halz(\xvec)}^2 - \nrm{\wgrad\Halz(\xvecm)}^2 + (2\alpha-1)\La\big(\Halz(\xvec)-\Halz(\xvecm)\big) \\
	= &\nrm{\wgrad\Halz(\xvec)}^2 + (2\alpha-1)\La\big(\Halz(\xvec)-\Halz(\xvecm)\big) 
	\leq \left(1+\frac{2\alpha-1}{2}\right)\nrm{\wgrad\Halz(\xvec)}^2,
\end{align*}
where we used \eqref{eq:HleqF} in the last step.
\end{proof}

\subsubsection{Inerpretation of the scheme as discrete Wasserstein gradient flow}\label{sec:MM}
Starting from the discretized perturbed information functional $\Falz$
we approximate the spatially discrete gradient flow equation
\begin{align}
  \label{eq:sdgradflow}
  \dot{\xvec} = -\wgrad\Falz(\xvec)
\end{align}
also in time, using \emph{minimizing movements}.
To this end, remember the temporal decomoposition of $[0,+\infty)$ by 
\begin{align*}
	\left\lbrace 0 = t_0 < t_1 < \ldots < t_n < \ldots\right\rbrace,\quad \textnormal{where}\quad t_n = t_{n-1} + \tau_n,
\end{align*}
using time step sizes $\thet:=\{\tau_1,\tau_2,\ldots,\tau_n,\ldots\}$ with $\tau_n\leq\tau$ and $\tau>0$.
As before in the introduction, we combine the spatial and temporal mesh widths in a single discretization parameter $\Delta=(\thet;\theh)$.
For each $\yvec\in\xseqN$, introduce the \emph{Yosida-regularized information functional} 
$\Fy(\cdot,\cdot,\cdot,\yvec):[0,+\infty)\times[0,\tau]\times\xseqN$ by 
\begin{align}\label{eq:dmm}
  \Fy(\lambda,\sigma,\xvec,\yvec) = \frac{1}{2\sigma}\nrm{\xvec-\yvec}^2+\Falz(\xvec).
\end{align}
A fully discrete approximation $(\xvec_\Delta^n)_{n=0}^\infty$ of \eqref{eq:sdgradflow} is now defined inductively
from a given initial datum $\xvec_\Delta^0$ by choosing each $\xvec_\Delta^n$ 
as a global minimizer of $\Fy(\lambda,\tau_n,\cdot,\xvec_\Delta^{n-1})$.
Below, we prove that such a minimizer always exists (see Lemma \ref{lem:cfl}).

In practice, one wishes to define $\xvec_\Delta^n$ as --- preferably unique --- solution 
of the Euler-Lagrange equations associated to $\Fy(\lambda,\tau_n,\cdot,\xvec_\Delta^{n-1})$,
which leads to the implicit Euler time stepping:
\begin{align}
  \label{eq:euler}
  \frac{\xvec-\xvec_\Delta^{n-1}}{\tau} = -\wgrad\Falz(\xvec).
\end{align}
Using the explicit representation of $\grad\Falz$, 
it is immediately seen that \eqref{eq:euler} is indeed the same as \eqref{eq:dgf}.
Equivalence of \eqref{eq:euler} and the minimization problem is guaranteed at least for sufficiently small $\tau>0$,
as the following Proposition shows. 
%
\begin{prp}
  \label{prp:wellposed}
  For each discretization $\Delta$ and every initial condition $\xvec^0\in\xseqN$,
  the sequence of equations \eqref{eq:euler} can be solved inductively.
  Moreover, if $\tau>0$ is sufficiently small with respect to $\delta$ and $\Falz(\xvec^0)$,
  then each equation \eqref{eq:euler} possesses a unique solution with $\Falz(\xvec)\le\Falz(\xvec^0)$,
  and that solution is the unique global minimizer of $\Fy(\lambda,\tau_n,\cdot,\xvec_\Delta^{n-1})$.
\end{prp}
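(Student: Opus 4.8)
The plan is to combine the direct method for the minimizing-movement problem \eqref{eq:dmm} with a localized semi-convexity estimate for $\Falz$. Throughout, fix $n\ge1$, set $\yvec:=\xvec^{n-1}\in\xseqN$ and $\sigma:=\tau_n\in(0,\tau]$, and abbreviate $F:=\Fy(\lambda,\sigma,\cdot,\yvec)=\frac1{2\sigma}\nrm{\cdot-\yvec}^2+\Falz$.

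\emph{Step 1 (solvability).} First I would check that $F$ is coercive and blows up at $\partial\xseqN$. Since $\Falz\ge0$, one has $\nrm{\xvec-\yvec}^2\le2\sigma F(\xvec)$, so every sublevel set of $F$ is bounded; and approaching $\partial\xseqN$ from inside forces some interval length $x_k-x_{k-1}$ to zero, i.e.\ some $z_\kmh\to\infty$, whence — picking the smallest such index and using the convention $z_{-1/2}=0$ in \eqref{eq:Falz} — the corresponding summand $\big(z_\kph^{\alpha+1/2}-z_\kmh^{\alpha+1/2}\big)^2$, and therefore $\Falz$, tends to $+\infty$. Hence $\{F\le c\}$ is a compact subset of $\xseqN$ for every $c\ge\inf F$ (this is Lemma \ref{lem:cfl}), so $F$ attains its minimum at some $\xvec^*\in\xseqN$. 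Being an interior minimizer of the smooth functional $F$ on the open set $\xseqN$, $\xvec^*$ satisfies $\wgrad F(\xvec^*)=0$, which by the explicit form of $\grad\Falz$ (read off from \eqref{eq:zrule}) is exactly \eqref{eq:euler} with $\tau=\tau_n$; inductively this proves solvability. Testing $F$ against $\yvec$ gives $\Falz(\xvec^*)\le F(\xvec^*)\le F(\yvec)=\Falz(\yvec)$, so $\Falz$ is non-increasing along the scheme, and in particular $\Falz(\xvec^n)\le\Falz(\xvec^0)=:C$ at every step.

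\emph{Step 2 (localized Hessian bound).} Next I would extract from $\Falz(\xvec)\le C$ a uniform density bound: by \eqref{eq:Falz} each consecutive difference satisfies $|z_\kph^{\alpha+1/2}-z_\kmh^{\alpha+1/2}|\le\sqrt{C\delta}/\Theta_\alpha$, and telescoping from $z_{-1/2}=0$ yields $z_\kmh\le\mathrm{z}_{\max}(C,\delta)$ for all $k$, i.e.\ $x_k-x_{k-1}\ge\delta/\mathrm{z}_{\max}$. Thus the sublevel set $U_C:=\{\xvec\in\xseqN:\Falz(\xvec)\le C\}$ sits inside the \emph{convex} set $V:=\{\xvec\in\xseqN:z_\kmh(\xvec)\le\mathrm{z}_{\max}\text{ for all }k\}$. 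On $V$ the Hessian $\big(\partial_{x_i}\partial_{x_j}\Falz\big)_{i,j}$ is bounded: the confinement part contributes the constant matrix $\lambda\,\mathrm{Id}$, and the information part is translation invariant and, after differentiating twice with $\grad z_\kappa=-z_\kappa^2(\ee_{\kappp}-\ee_{\kappm})/\delta$, is a finite sum of terms of the form $\delta^{-m}\,z_{\kappa_1}^{p_1}\cdots z_{\kappa_r}^{p_r}\big(z_\kph^{\alpha+1/2}-z_\kmh^{\alpha+1/2}\big)^q$ with all exponents $p_i,q\ge0$ (it is here that $\alpha\in[\tfrac12,1]$ is used, so that no negative power of $z$ occurs), each of which is bounded on $V$. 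Consequently there is $\Lambda=\Lambda(C,\delta)>0$ such that the second directional derivative of $\Falz$ at any $\xvec\in V$ in any direction $\vvec$ is $\ge-\Lambda\nrm{\vvec}^2$.

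\emph{Step 3 (uniqueness for small $\tau$, global minimality).} Assume now $\tau<1/\Lambda(C,\delta)$, and let $\xvec_1,\xvec_2\in U_C$ both solve \eqref{eq:euler} with step $\sigma\le\tau$, i.e.\ both be critical points of $F$. As $V$ is convex, the segment $\xvec(t)=(1-t)\xvec_1+t\xvec_2$ remains in $V$ for $t\in[0,1]$; since the quadratic term of $F$ has second directional derivative $\frac1\sigma\nrm{\vvec}^2$ in direction $\vvec$, Step 2 gives $\varphi''(t)\ge(\frac1\sigma-\Lambda)\nrm{\xvec_2-\xvec_1}^2\ge(\frac1\tau-\Lambda)\nrm{\xvec_2-\xvec_1}^2$ for $\varphi(t):=F(\xvec(t))$, which is strictly positive unless $\xvec_1=\xvec_2$. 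But $\varphi'(0)=\varphi'(1)=0$ since $\xvec_1,\xvec_2$ are critical, so strict convexity of $\varphi$ forces $\xvec_1=\xvec_2$. Hence \eqref{eq:euler} has at most one solution in $U_C$; together with Step 1 — whose minimizer $\xvec^*$ lies in $U_C$ because $\Falz(\xvec^*)\le\Falz(\yvec)\le C$ — this $\xvec^*$ is the unique solution of \eqref{eq:euler} with $\Falz(\xvec)\le\Falz(\xvec^0)$, and the unique global minimizer of $F$: any global minimizer $\tilde\xvec$ has $\Falz(\tilde\xvec)\le F(\tilde\xvec)=\min F\le C$, hence $\tilde\xvec\in U_C$ and $\tilde\xvec=\xvec^*$.

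\emph{Main obstacle.} The delicate point is Step 2 — showing that the Hessian of $\Falz$ is controlled by an \emph{upper} bound on the $z_\kmh$ alone, so that the controlled region $V$ is convex and automatically contains the segment between any two points of $\{\Falz\le C\}$; everything else is a soft convexity argument. The cost is that the admissible $\tau$ must be smaller than $1/\Lambda(\Falz(\xvec^0),\delta)$, which is exactly the stated dependence of the threshold on $\delta$ and on $\Falz(\xvec^0)$.
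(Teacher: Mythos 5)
Your argument is correct and follows essentially the same two-step strategy as the paper's Lemma \ref{lem:cfl}: existence via compactness of sublevel sets (coercivity from the quadratic penalty plus blow-up of $\Falz$ as an interval collapses), and uniqueness for small $\tau$ via a lower bound on the Hessian of $\Falz$ on the region reached by the scheme, which the $\tfrac{1}{\sigma}$-convex quadratic term then overrides. Your one refinement is worth keeping: by enlarging the sublevel set $\{\Falz\le C\}$ to the \emph{convex} set $V=\{z_\kappa\le \mathrm{z}_{\max}\}$ before invoking semi-convexity, you guarantee that the segment joining two critical points stays where the Hessian is controlled --- a point the paper's proof passes over, since it asserts $\lambda_C$-convexity of $\Falz$ on the sublevel set $A_C$ without checking that $A_C$ is convex.
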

The proof of this proposition is a consequence of the following rather technical lemma.
\begin{lem}
  \label{lem:cfl}
  Fix a spatial discretization parameter $\delta$ and a bound $C>0$.
  Then for every $\yvec\in\xseqN$ with $\Falz(\yvec)\le C$, the following are true:
  \begin{itemize}
  \item for each $\sigma>0$, 
    the function $\Fy(\lambda,\sigma,\cdot,\yvec)$ possesses at least one global minimizer $\xvec^*\in\xseqN$;
  \item there exists a $\tau_C>0$ independent of $\yvec$ such that for each $\sigma\in(0,\tau_C)$,
    the global minimizer $\xvec^*\in\xseqN$ is strict and unique, 
    and it is the only critical point of $\Fy(\lambda,\sigma,\cdot,\yvec)$ with $\Falz(\xvec)\le C$.
  \end{itemize}
\end{lem}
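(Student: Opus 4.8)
The plan is to establish Lemma~\ref{lem:cfl} by a combination of coercivity/compactness arguments for existence and a quantitative convexity estimate for uniqueness. For the first bullet, the idea is that on the sublevel set $\{\xvec\in\xseqN : \Falz(\xvec)\le C\}$, since $\Falz(\xvec)\ge\frac{\lambda}{2}\nrm{\xvec}^2$ (or in the case $\lambda=0$, since the entropy part of $\Falz=\nrm{\wgrad\Hanz(\xvec)}^2$ already controls the $z_\kmh$, hence the spacings, via the Wasserstein-metric equivalence \eqref{eq:metricequivalent} together with the fixed total mass $M$), the term $\frac{1}{2\sigma}\nrm{\xvec-\yvec}^2+\Falz(\xvec)$ is coercive on $\xseqN$, so an infimizing sequence for $\Fy(\lambda,\sigma,\cdot,\yvec)$ stays in a compact subset of $\setR^{K+1}$. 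The only thing to rule out is that the infimizing sequence approaches the boundary $\partial\xseqN$, i.e.\ two adjacent $x_k$ collide: but then some $z_\kmh\to\infty$, and $\fa$ (for $\alpha>\tfrac12$) or the Fisher-type term $\Falz$ blows up, so the infimum is not attained there. A careful check that $z_{1/2}$ and $z_{K-1/2}$ — which appear with boundary corrections in $[\D^2\zvec^{\alpha+1/2}]$ — do not cause trouble is needed; here it helps that $\Falz$ only involves the interior differences $z_\kph^{\alpha+1/2}-z_\kmh^{\alpha+1/2}$ for $k\in\ival$, but the entropy $\Hanz$ (used for coercivity when $\lambda=0$) sums $\fa(z_\kmh)$ over all $k=1,\dots,K$. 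Lower semicontinuity of $\Fy$ on the (relatively) compact minimizing set is then immediate from continuity of $\Falz$ on $\xseqN$, and one concludes a minimizer $\xvec^*$ exists.

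For the second bullet, the strategy is to show that for $\sigma$ small, $\Fy(\lambda,\sigma,\cdot,\yvec)$ is strictly convex on the relevant sublevel set, which forces uniqueness of critical points there. The point is that $\xvec\mapsto\frac{1}{2\sigma}\nrm{\xvec-\yvec}^2$ has Hessian $\frac1\sigma\,\mathrm{Id}$ (in the $\spr{\cdot}{\cdot}$ inner product), while $\Falz$ has a Hessian that is \emph{bounded below} on the set $\{\Falz\le C\}$: indeed, on this set, the spacings $x_k-x_{k-1}$ are bounded above (since $\nrm{\xvec}\le$ const when $\lambda>0$, or by the mass/metric-equivalence argument when $\lambda=0$) \emph{and} bounded below away from zero (since $z_\kmh=\delta/(x_k-x_{k-1})$ would otherwise blow up, contradicting the bound on $\Falz$ or $\Hanz$). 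On such a compact region away from $\partial\xseqN$, the smooth function $\Falz$ has a Hessian bounded below by some $-L_C\,\mathrm{Id}$ with $L_C$ depending only on $\delta$ and $C$. Hence for $\sigma<\tau_C:=1/L_C$, the Hessian of $\Fy(\lambda,\sigma,\cdot,\yvec)$ is positive definite on this region, so $\Fy$ is strictly convex there; any critical point $\xvec$ with $\Falz(\xvec)\le C$ lies in this region (and one checks the global minimizer $\xvec^*$ does too, since $\Fy(\lambda,\sigma,\xvec^*,\yvec)\le\Fy(\lambda,\sigma,\yvec,\yvec)=\Falz(\yvec)\le C$ implies $\Falz(\xvec^*)\le C$), and strict convexity on a convex neighbourhood rules out a second critical point and makes $\xvec^*$ strict. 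This also gives Proposition~\ref{prp:wellposed}: applying the lemma inductively with $C=\Falz(\xvec^0)$, each step's minimizer satisfies $\Falz(\xvec_\Delta^n)\le\Falz(\xvec_\Delta^{n-1})\le\dots\le\Falz(\xvec^0)=C$, so the hypothesis $\Falz(\yvec)\le C$ is preserved along the recursion, and $\tau_C$ works uniformly for all steps.

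I expect the main obstacle to be the explicit control of the Hessian of $\Falz$ from below — that is, showing $L_C$ depends only on $\delta$ and $C$ and not on $\yvec$ or $K$ in a bad way. The functional $\Falz$ involves compositions of the form $z_\kmh^{\alpha+1/2}=(\delta/(x_k-x_{k-1}))^{\alpha+1/2}$ summed and squared, so its second derivatives contain negative powers of the spacings up to fairly high order; one must extract from $\Falz(\yvec)\le C$ a quantitative lower bound on $\min_k(x_k-x_{k-1})$ (in terms of $\delta$, $C$, $\alpha$) and a quantitative upper bound, then bound the Hessian on the resulting compact set. The case distinction $\alpha=\tfrac12$ versus $\alpha\in(\tfrac12,1]$ will need separate bookkeeping because the entropy density $\fa$ changes form (logarithm vs.\ power) and, for $\lambda=0$, the coercivity/lower-spacing-bound argument relies on $\Hanz$ rather than on the confining term. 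A secondary, more routine obstacle is handling the boundary semi-indices $z_{1/2},z_{K-1/2}$ consistently in all these estimates, since the discrete Laplacian $[\D^2\zvec^{\alpha+1/2}]$ has one-sided stencils there; but since the variational functional $\Falz$ is the object being analysed (not \eqref{eq:dgf} directly), this amounts only to keeping track of which terms appear in the sums $\sum_{k\in\ival}$ versus $\sum_{k=1}^K$.
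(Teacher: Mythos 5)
Your proposal is correct and follows essentially the same route as the paper: existence via compactness of the sublevel set $A_C$ (bounded positions from the penalization term $\tfrac{1}{2\sigma}\nrm{\xvec-\yvec}^2$, and spacings bounded away from zero because $\Falz\le C$ caps $\max_\kappa z_\kappa$ through the telescoped differences $z_\kappp^{\alpha+1/2}-z_\kappm^{\alpha+1/2}$), followed by uniqueness from a lower Hessian bound $\grad^2\Falz\ge\lambda_C\eins$ on that compact set, giving strict convexity of $\Fy$ for $\sigma<\tau_C=\delta/(-\lambda_C)$. The only real differences are cosmetic: the paper obtains coercivity purely from the Moreau--Yosida term (your entropy-based coercivity for $\lambda=0$ would not control absolute positions by itself), and it dispenses with your anticipated ``main obstacle'' by simply invoking smoothness of $\Falz$ on the compact set $A_C$ rather than computing the Hessian bound explicitly.
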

\begin{proof}
Fix $\yvec\in\xseqN$ with $\Falz(\yvec)\leq C$, 
and define the nonempty (since it contains $\yvec$) sublevel $A_C:=\big(\Fy(\lambda,\sigma,\cdot,\yvec)\big)^{-1}([0,C])\subset\xseqN$.
First observe, that any $\xvec\in A_C$ holds
\begin{align}\label{eq:supp1}
	\sqrt{2\sigma C} \geq \nrm{\yvec-\xvec} \geq \delta^{\frac{1}{2}}\|\yvec-\xvec\|_{\infty} \geq \delta^{\frac{1}{2}}\big|\|\yvec\|_{\infty} - \|\xvec\|_{\infty}\big|,
\end{align}
hence $\|\xvec\|_{\infty}$ is bounded from above by $\sqrt{2\sigma C} + \|\yvec\|_\infty$. Especially,
\begin{align}\label{eq:supp2}
	\max_{k\in\ival} |x_k| \leq \sqrt{2\delta^{-1}\sigma C} + \|\yvec\|_{\infty} =: L(\delta,\sigma,\yvec),
\end{align}
which means, in the sense of density functions, that any $u=\cf_\theh[\xvec]$ with $\xvec\in A_C$ 
is compactly supported in $[-L(\delta,\sigma,\yvec),L(\delta,\sigma,\yvec)]$. 
Consequently, take again $\xvec\in\xseqN$ arbitrarily and declare 
$z_*=\min_{\kappa\in\hval} z_\kappa$ and $z^*=\max_{\kappa\in\hval} z_\kappa$, then on the one hand
the conservation of mass yields the boundedness of $z_*$ from above,
\begin{align*}
	1 = \intom \cf_\theh[\xvec]\dd x = \sum_{\kappa\in\hval} z_\kappa^{-1}(x_\kappp-x_\kappm) \leq 2 L(\delta,\sigma,\yvec) (z_*)^{-1},
\end{align*}
and on the other hand $\Falz(\xvec)\leq\Falz(\yvec)\leq C$ yields an upper bound for $z^*$, as the following calculation shows:
\begin{align}
	&(z^*)^{\alpha+\frac{1}{2}}-(z_*)^{\alpha+\frac{1}{2}} \leq \sum_{\kappa\in\hval}|z_\kappp^{\alpha+\frac{1}{2}} - z_\kappm^{\alpha+\frac{1}{2}}|
	\leq \left(\sum_{\kappa\in\hval}\delta\right)^{\frac{1}{2}}
	\left(\delta\sum_{\kappa\in\hval}\left(\frac{z_\kappp^{\alpha+\frac{1}{2}}- z_\kappm^{\alpha+\frac{1}{2}}}{\delta}\right)^2\right)^{\frac{1}{2}} \notag\\
	\Longrightarrow \;&
	z^*\leq \left(M\Theta_\alpha^{-1} C + \big(2L(\delta,\sigma,\yvec)\big)^{\alpha+\frac{1}{2}}\right)^{1/(\alpha+1/2)} \label{eq:zbound}.
\end{align}
Collecting the above observations, we first conclude that $A_C\subseteq\xseqN$ is a compact subset of $\setR^{K+1}$, 
due to $|x_0|,|x_K|\leq L(\delta,\sigma,\yvec)$ and the continuity of $\Falz$.
Moreover, every vector $\xvec\in A_C$ satisfies $x_\kappp-x_\kappm\ge\delta (z^*)^{-1}\geq \underline x$ for all $\kappa\in\hval$
with a positive constant $\underline x$ that depends on $C$ and $L(\delta,\sigma,\yvec)$.
Thus $A_C$ does not touch the boundary (in the ambient $\setR^{K+1}$) of $\xseqN$.
Consequently, $A_C$ is closed and bounded in $\xseqN$, endowed with the trace topology.

The restriction of the continuous function $\Fy(\lambda,\sigma,\cdot,\yvec)$ to the compact set $A_C$ 
possesses a minimizer $\xvec^*\in A_C$.
We clearly have $\Falz(\xvec^*)\le\Falz(\yvec)\le C$, 
and so $\xvec^*$ lies in the interior of $A_C$ and therefore is a global minimizer of $\Fy(\lambda,\sigma,\cdot,\yvec)$.
This proves the first claim.

Since $\Falz:\xseqN\to\setR$ is smooth, its restriction to $A_C$ is $\lambda_C$-convex with some $\lambda_C\le0$,
i.e., $\grad^2\Falz(\xvec)\ge\lambda_C\eins_{K+1}$ for all $\xvec\in A_C$.
Independently of $\yvec$, we have that
\begin{align*}
	\grad^2\Fy(\lambda,\sigma,\xvec,\yvec) = \grad^2\Falz(\xvec) + \frac\delta\tau\eins_{K+1},
\end{align*}
which means that $\xvec\mapsto\Fy(\lambda,\sigma,\xvec,\yvec)$ is strictly convex on $A_C$
if
\begin{align*}
	0< \sigma < \tau_C:=\frac\delta{(-\lambda_C)}.
\end{align*}
Consequently, each such $\Fy(\lambda,\sigma,\cdot,\yvec)$ has at most one critical point $\xvec^*$ in the interior of $A_C$, 
and this $\xvec^*$ is necessarily a strict global minimizer.
\end{proof}
\begin{rmk}[propagation of the support]
Take a solution $\xvec_\Delta$ of \eqref{eq:dmm} with density functions $u_\Delta$. 
As we already noted in the above proof, any density $u_\Delta^n$ 
has compact support in $[-L^n,L^n]$
with $L^n=L(\delta,\tau_n,\xvec_\Delta^{n-1})$ as in \eqref{eq:supp2}. Hence
\begin{align*}
	&\|\xvec_\Delta^n\|_{\infty} \leq \sqrt{2\delta^{-1}\tau_n\Falz(\xvec_\Delta^{n-1})} + \|\xvec_\Delta^{n-1}\|_{\infty} \\
	\Longrightarrow\;&
	\|\xvec_\Delta^n\|_{\infty} \leq \sqrt{2\delta^{-1}}\sum_{j=1}^n \sqrt{\tau_j\Falz(\xvec_\Delta^{j-1})} 
			+ \|x_\Delta^0\|_\infty,
\end{align*}
which is the best we can assume in case of $\lambda=0$. If $\lambda>0$, one can find a much better bound on the support of $u_\Delta$,
namely by replacing \eqref{eq:supp1} by,
\begin{align*}
	\|\xvec_\Delta^n\|_{\infty}\leq \delta^{-1}\nrm{\xvec_\Delta^n}^2 
	\leq \frac{2\delta^{-1}}{\lambda}\Falz(\xvec_\Delta^{n-1})
	\leq \frac{2\delta^{-1}}{\lambda}\Falz(\xvec_\Delta^0).
\end{align*}
\end{rmk}

%
\subsection{Some discrete variational theory}\label{sec:var}
In this section, we consider an arbitrary function $\Vz:\xseqN\to(-\infty,+\infty]$ and assume the existence of a value 
$\tau^*=\tau^*(\Vz)>0$, such that for any $\xvec\in\xseqN$ the minimization problem
\begin{align}\label{eq:mmV}
	\Vz_\sigma(\yvec,\xvec) := \frac{1}{2\sigma}\nrm{\yvec-\xvec}^2 + \Vz(\yvec) \quad\longrightarrow\quad\operatorname{min}
\end{align}
has a solution $\xvec_\sigma$ in $\xseqN$ for any $\sigma\in(0,\tau^*]$. 
In the literature, the function $\sigma\mapsto\xvec_\sigma$ is known as the \emph{De Giorgi's variational interpolant} 
connecting $\xvec$ and $\xvec_{\tau^*}$, see f.i. \cite[section 3.1]{AGS}.
Another interesting object in this context is the \emph{discrete Mureau-Yosida approximation}
$\sigma\mapsto\Vz_\sigma(\xvec_\sigma,\xvec)$, which can be defined for any $\xvec\in\xseqN$.
Analogously to the theory developed in \cite[section 3.1]{AGS}, one can even introduce a discrete version of the local slope of $\Vz$, i.e.
\begin{align}\label{eq:slp}
	\slp{\Vz}(\xvec) := \limsup_{\yvec\in\xseqN:\yvec\to\xvec}\frac{\big(\Vz(\xvec)-\Vz(\yvec)\big)^+}{\nrm{\xvec-\yvec}}
\end{align}
It turns out that $\slp{\Vz}(\xvec) = \nrm{\wgrad\Vz(\xvec)}$, which is a consequence of the lemma below
and an analogoue calculation done in \cite[Lemma 3.1.5]{AGS}. 

The above definitions remind of their continuous counterparts as defined in \cite[section 3.1]{AGS},
and the reader familiar with \cite{AGS} knows, that those objects are well-studied.
Some properties of the discrete Mureau-Yosida approximation, which will be needed in later sections
to study the asymptotic behaviour of solutions to \eqref{eq:dmm2}, are listened in the following lemma.
The proof is a special case of \cite[Theorem 3.1.4 and Lemma 3.1.5]{AGS}.
\begin{lem}\label{lem:Yosida_discrete}
Fix $\xvec\in\xseqN$ and declare by $\xvec_\sigma$ the De Giorgi's variational interpolant. 
Then it holds for any $\sigma\in(0,\tau^*]$ 
\begin{align}
\label{eq:Yosida_int}
	\frac{\nrm{\xvec_\sigma-\xvec}^2}{2\sigma} + \int_0^{\sigma}\frac{\nrm{\xvec_r-\xvec}^2}{2r^2}\dd r 
	= \Vz(\xvec) - \Vz(\xvec_\sigma).
\end{align}
If we further assume the continuity of
$\sigma\mapsto\grad\Vz(\xvec_\sigma)$ and $\sigma\mapsto\grad^2\Vz(\xvec_\sigma)$,
and the validity of the system Euler Lagrange equations
\begin{align*}
	\frac{1}{\sigma}(\xvec_\sigma-\xvec) = -\wgrad\Vz(\xvec_\sigma),
\end{align*}
for $\sigma\in(0,\tau^*]$, then
\begin{align}\label{eq:slprep}
	\nrm{\wgrad\Vz(\xvec)}^2 = \lim_{\sigma\downarrow0}\frac{\nrm{\xvec-\xvec_\sigma}^2}{\sigma^2}
	= \lim_{\sigma\downarrow0}\frac{\Vz(\xvec)-\Vz(\xvec_\sigma)}{\sigma}
	= \lim_{\sigma\downarrow0}\left(\frac{\Vz(\xvec)-\Vz(\xvec_\sigma)}{\nrm{\xvec-\xvec_\sigma}}\right)^2.
\end{align}
\end{lem}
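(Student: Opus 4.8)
The plan is to transcribe, essentially verbatim, the classical argument for De Giorgi's variational interpolant and the Moreau--Yosida approximation (see \cite[Theorem 3.1.4 and Lemma 3.1.5]{AGS}) into the finite-dimensional Hilbert space $(\setR^{K+1},\spr{\cdot}{\cdot})$; recall that, by definition, $\wgrad$ is exactly the gradient with respect to this inner product, so that the Euler--Lagrange system $\tfrac1\sigma(\xvec_\sigma-\xvec)=-\wgrad\Vz(\xvec_\sigma)$ is precisely the stationarity condition for $\Vz_\sigma(\cdot,\xvec)$. Throughout set $\ds_\sigma:=\nrm{\xvec_\sigma-\xvec}$ and let $Y(\sigma):=\Vz_\sigma(\xvec_\sigma,\xvec)=\tfrac{\ds_\sigma^2}{2\sigma}+\Vz(\xvec_\sigma)$ be the discrete Moreau--Yosida approximation; this quantity is well defined on $(0,\tau^*]$ regardless of which minimizer $\xvec_\sigma$ is picked, since it equals the minimal value.

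For \eqref{eq:Yosida_int} I would argue by the familiar ``exchange of competitors''. First, comparing the optimality of $\xvec_{\sigma_1}$ and $\xvec_{\sigma_2}$ for $\sigma_1<\sigma_2$ and adding the two resulting inequalities gives $(\tfrac1{2\sigma_1}-\tfrac1{2\sigma_2})(\ds_{\sigma_2}^2-\ds_{\sigma_1}^2)\ge0$, hence $\sigma\mapsto\ds_\sigma$ is nondecreasing and in particular continuous off an at most countable set. Second, using $\xvec_\sigma$ as a competitor for the $(\sigma+h)$-problem and $\xvec_{\sigma+h}$ as a competitor for the $\sigma$-problem one gets, for $h>0$, $-\tfrac{\ds_{\sigma+h}^2}{2\sigma(\sigma+h)}\le\tfrac{Y(\sigma+h)-Y(\sigma)}{h}\le-\tfrac{\ds_\sigma^2}{2\sigma(\sigma+h)}$ (and the symmetric chain for $h<0$); since $\ds$ is locally bounded this shows $Y$ is locally Lipschitz on $(0,\tau^*]$ and $Y'(\sigma)=-\ds_\sigma^2/(2\sigma^2)$ at every continuity point of $\ds$, hence for a.e.\ $\sigma$. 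Third, from $Y(\sigma)\le\Vz_\sigma(\xvec,\xvec)=\Vz(\xvec)$ one obtains $\ds_\sigma^2\le2\sigma(\Vz(\xvec)-\Vz(\xvec_\sigma))$; since the $\xvec_\sigma$ all lie in the fixed ball $\{\nrm{\cdot-\xvec}\le\ds_{\tau^*}\}$ and $\Vz$ is lower semicontinuous --- which holds in all applications, where $\Vz=\Falz$ is continuous --- the values $\Vz(\xvec_\sigma)$ stay bounded below, so $\ds_\sigma\to0$, whence $\xvec_\sigma\to\xvec$, and then $\Vz(\xvec_\sigma)\to\Vz(\xvec)$ and $Y(\sigma)\to\Vz(\xvec)$ as $\sigma\downarrow0$. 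Integrating $Y'$ over $[\eps,\sigma]$ (absolute continuity on compact subintervals), letting $\eps\downarrow0$ by monotone convergence for the nonnegative integrand, and unfolding the definition of $Y$ yields precisely \eqref{eq:Yosida_int}. I expect this part --- specifically the a.e.\ differentiability of $Y$ together with the identification of the boundary value $Y(0^+)=\Vz(\xvec)$ --- to be the main, though essentially routine, technical point; everything else is bookkeeping.

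For \eqref{eq:slprep}, the Euler--Lagrange relation immediately gives $\ds_\sigma/\sigma=\nrm{\wgrad\Vz(\xvec_\sigma)}$ for all $\sigma\in(0,\tau^*]$, and combining this with $\xvec_\sigma\to\xvec$ and the assumed continuity of $\sigma\mapsto\grad\Vz(\xvec_\sigma)$ (so that $\wgrad\Vz(\xvec_\sigma)\to\wgrad\Vz(\xvec)$) proves the first equality $\lim_{\sigma\downarrow0}\ds_\sigma^2/\sigma^2=\nrm{\wgrad\Vz(\xvec)}^2=:g$. For the second equality I would divide \eqref{eq:Yosida_int} by $\sigma$: the term $\ds_\sigma^2/(2\sigma^2)$ tends to $\tfrac12 g$ by what was just shown, while $\tfrac1\sigma\int_0^\sigma\ds_r^2/(2r^2)\dd r$ tends to the same limit because its integrand equals $\tfrac12\nrm{\wgrad\Vz(\xvec_r)}^2$, which is continuous up to $r=0$ with value $\tfrac12 g$ (a mean-value/Cesàro argument); adding gives $\lim_{\sigma\downarrow0}(\Vz(\xvec)-\Vz(\xvec_\sigma))/\sigma=g$ (the same conclusion also falls out of a second-order Taylor expansion of $\Vz$ at $\xvec_\sigma$, using continuity of $\grad^2\Vz(\xvec_\sigma)$ and the Euler--Lagrange relation to evaluate the first-order term). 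Finally, writing $\big((\Vz(\xvec)-\Vz(\xvec_\sigma))/\ds_\sigma\big)^2=\big((\Vz(\xvec)-\Vz(\xvec_\sigma))/\sigma\big)^2\cdot(\sigma^2/\ds_\sigma^2)$ and invoking the first two limits yields the last equality; in the degenerate case $\nrm{\wgrad\Vz(\xvec)}=0$ one checks directly that $\ds_\sigma=o(\sigma)$ and that all three expressions converge to $0$, so the identity holds trivially.
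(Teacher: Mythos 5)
Your proposal is correct and follows exactly the route the paper intends: the paper gives no written proof of this lemma but simply declares it "a special case of [AGS, Theorem 3.1.4 and Lemma 3.1.5]", and what you have written out — the competitor-exchange monotonicity of $\sigma\mapsto\nrm{\xvec_\sigma-\xvec}$, the a.e.\ differentiation and integration of the Moreau--Yosida value function to get \eqref{eq:Yosida_int}, and the Euler--Lagrange identity $\nrm{\xvec_\sigma-\xvec}/\sigma=\nrm{\wgrad\Vz(\xvec_\sigma)}$ combined with a Ces\`aro argument for \eqref{eq:slprep} — is precisely that AGS argument transcribed to the finite-dimensional setting. No discrepancy to report.
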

\section{Analysis of equilibrium}\label{sec:equi}
%
%

In that which follows, we will analyze the long-time behaviour in the discrete setting and will especially prove Theorem \ref{thm:main2}.
As we have already seen in \cite{MMS}, the scheme's underlying variational structure is essential to get optimal decay rates. 
Due to our structure-preserving discretization, it is even possible to derive analogue, asymptotically equal decay rates for solutions to \eqref{eq:dmm}.

%
\subsection{Entropy dissipation -- the case of positive confinement $\lambda>0$}
In this section, we pursue the discrete rate of decay towards discrete equilibria and try to verify the statements in Theorem \ref{thm:main2}
to that effect. That is why we assume henceforth $\lambda>0$.

\begin{lem}\label{lem:exp}
A solution $\xvec_\Delta$ to the discrete minizing movement scheme \eqref{eq:dmm2} dissipates the entropies $\Halz$ and $\Falz$ at least exponential, i.e.
\begin{align}
	\big(1 + 2\tau_n\lambda\big)\left(\Halz(\xvec_\Delta^n) - \Halzm\right) &\leq \Halz(\xvec_\Delta^{n-1}) - \Halzm, 
	\quad\textnormal{and} \label{eq:expHn} \\
	\big(1+2\tau_n\lambda\big)\left(\Falz(\xvec_\Delta^n) - \Falzm\right)
	&\leq \Falz(\xvec_\Delta^{n-1}) - \Falzm \label{eq:expFn}
\end{align}
for any time step $n=1,2,\ldots$.
\end{lem}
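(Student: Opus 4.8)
The plan is to exploit the minimizing-movement structure of \eqref{eq:dmm2} together with the $\La$-convexity of $\Halz$ (Lemma \ref{lem:Lconvex}) and the discrete fundamental entropy-information relation (Corollary \ref{cor:dfeir}). The key observation is that $\xvec_\Delta^n$ is, by construction, the minimizer of $\Fy(\lambda,\tau_n,\cdot,\xvec_\Delta^{n-1})$, so it satisfies the Euler--Lagrange equation $\tau_n^{-1}(\xvec_\Delta^n-\xvec_\Delta^{n-1}) = -\wgrad\Falz(\xvec_\Delta^n)$, and moreover the variational inequality obtained by comparing with competitors. I would first prove \eqref{eq:expFn} for $\Falz$, and then deduce \eqref{eq:expHn} for $\Halz$ afterwards, since both functionals share the minimizer $\xvecm$ and decay at the same rate.

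First I would test the minimality of $\xvec_\Delta^n$ against the competitor $\yvec=(1-s)\xvec_\Delta^n + s\,\xvecm$ for $s\in(0,1)$. Writing out $\Fy(\lambda,\tau_n,\yvec,\xvec_\Delta^{n-1})\ge\Fy(\lambda,\tau_n,\xvec_\Delta^n,\xvec_\Delta^{n-1})$, expanding the quadratic term $\nrm{\yvec-\xvec_\Delta^{n-1}}^2$, using the $\lambda$-convexity estimate for $\Falz$ (which follows from $\La$-convexity of $\Halz$ via Corollary \ref{cor:dfeir}: $\Falz = \nrm{\wgrad\Halz}^2 + (2\alpha-1)\La\Halz$, so the confinement part contributes a $\lambda\nrm{\cdot}^2$ term) on the competitor, dividing by $s$ and letting $s\downarrow 0$, I expect to obtain an inequality of the form
\begin{align*}
	\frac{1}{\tau_n}\spr{\xvec_\Delta^n-\xvec_\Delta^{n-1}}{\xvec_\Delta^n-\xvecm} + \big(\Falz(\xvec_\Delta^n)-\Falzm\big) + \frac{\lambda}{2}\nrm{\xvec_\Delta^n-\xvecm}^2 \leq 0.
\end{align*}
The scalar-product term is handled by the elementary identity $2\spr{\xvec_\Delta^n-\xvec_\Delta^{n-1}}{\xvec_\Delta^n-\xvecm} = \nrm{\xvec_\Delta^n-\xvecm}^2 - \nrm{\xvec_\Delta^{n-1}-\xvecm}^2 + \nrm{\xvec_\Delta^n-\xvec_\Delta^{n-1}}^2$, which gives monotone dissipation of $n\mapsto\nrm{\xvec_\Delta^n-\xvecm}^2$. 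Then I would combine this with the convexity-type bound $\nrm{\xvec_\Delta^n-\xvecm}^2 \gtrsim \Falz(\xvec_\Delta^n)-\Falzm$ (the analogue of the lower bound in \eqref{eq:HleqF}, again via Corollary \ref{cor:dfeir} and Lemma \ref{lem:Lconvex}) to absorb the $\frac{\lambda}{2}\nrm{\cdot}^2$ term into a multiple of $\Falz(\xvec_\Delta^n)-\Falzm$, yielding $(1+2\tau_n\lambda)(\Falz(\xvec_\Delta^n)-\Falzm)\le\Falz(\xvec_\Delta^{n-1})-\Falzm$.

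For \eqref{eq:expHn} I would instead use that $\xvec_\Delta^n$ satisfies the Euler--Lagrange equation, so $\xvec_\Delta^{n-1}-\xvec_\Delta^n = \tau_n\wgrad\Falz(\xvec_\Delta^n)$, and test the $\La$-convexity inequality \eqref{eq:Lconvex} of $\Halz$ at $(\xvec_\Delta^n,\xvecm)$ together with a discrete chain-rule/entropy-entropy-dissipation argument: the drop $\Halz(\xvec_\Delta^{n-1})-\Halz(\xvec_\Delta^n)$ is controlled below by $\spr{\wgrad\Halz(\xvec_\Delta^n)}{\xvec_\Delta^{n-1}-\xvec_\Delta^n} - \tfrac{\text{(const)}}{2}\nrm{\xvec_\Delta^{n-1}-\xvec_\Delta^n}^2$, and $\spr{\wgrad\Halz(\xvec_\Delta^n)}{\wgrad\Falz(\xvec_\Delta^n)}$ together with \eqref{eq:HleqF} and the definition of $\Falz=\nrm{\wgrad\Halz}^2+(2\alpha-1)\La\Halz$ should give the gain $2\lambda(\Halz(\xvec_\Delta^n)-\Halzm)$. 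The main obstacle I anticipate is bookkeeping the constants correctly so that the $\lambda$ (not $\La$) appears in the exponent, i.e. correctly tracking the factor $\La^2=\lambda/(2\alpha+1)$ and the coefficient $(2\alpha-1)$ through Corollary \ref{cor:dfeir}; the telescoping/summation to pass from the one-step estimates \eqref{eq:expHn}--\eqref{eq:expFn} to the exponential bounds \eqref{eq:expH}--\eqref{eq:expF} in Theorem \ref{thm:main2} is then routine via $(1+2\tau_n\lambda)^{-1}\le e^{-2\lambda\tau_n/(1+\lambda\tau)}$.
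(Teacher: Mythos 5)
Your argument for \eqref{eq:expHn} is essentially the paper's: Euler--Lagrange plus $\La$-convexity of $\Halz$ gives $\Halz(\xvec_\Delta^{n-1})-\Halz(\xvec_\Delta^n)\ge\tau_n\spr{\wgrad\Falz(\xvec_\Delta^n)}{\wgrad\Halz(\xvec_\Delta^n)}$, then differentiating the relation of Corollary \ref{cor:dfeir} and using $\grad^2\Halz\ge\La$ yields $\spr{\wgrad\Falz}{\wgrad\Halz}\ge(2\alpha+1)\La\nrm{\wgrad\Halz}^2$, and \eqref{eq:HleqF} together with $(2\alpha+1)\La^2=\lambda$ closes the loop. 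That part is sound (note only that the $\La$-convexity gives you a \emph{positive} quadratic remainder, so your ``$-\tfrac{\text{const}}{2}\nrm{\cdot}^2$'' correction is unnecessary).

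The proof of \eqref{eq:expFn}, however, has a genuine gap. Your competitor argument with $\yvec=(1-s)\xvec_\Delta^n+s\,\xvecm$ requires $\Falz$ to be $\lambda$-convex along segments, and this does \emph{not} follow from Corollary \ref{cor:dfeir}: the term $\nrm{\wgrad\Halz}^2$ is a sum of squares of \emph{differences} of convex functions of the increments $x_k-x_{k-1}$, and such expressions have indefinite Hessians in general. The paper establishes convexity only for $\Halz$ (Lemma \ref{lem:Lconvex}), never for $\Falz$ -- indeed in the continuous theory of \cite{MMS} the information functional is not displacement convex, which is precisely why the flow interchange technique is needed there. Moreover, even granting the EVI you write down, the absorption step points the wrong way: convexity would give $\Falz(\xvec)-\Falzm\ge\tfrac{\lambda}{2}\nrm{\xvec-\xvecm}^2$, not $\nrm{\xvec_\Delta^n-\xvecm}^2\gtrsim\Falz(\xvec_\Delta^n)-\Falzm$, so the EVI would yield exponential decay of the squared distance $\nrm{\xvec_\Delta^n-\xvecm}^2$ but not of the functional gap. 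The paper's route is different: from \eqref{eq:step1} and the minimality of $\xvec_\Delta^n$ one gets $\tau_n\sqrt{(2\alpha+1)\lambda}\,\nrm{\wgrad\Halz(\xvec_\Delta^n)}\le\nrm{\xvec_\Delta^n-\xvec_\Delta^{n-1}}$, hence by \eqref{eq:FleqgradH} the bound $2\tau_n^2\lambda\big(\Falz(\xvec_\Delta^n)-\Falzm\big)\le\nrm{\xvec_\Delta^n-\xvec_\Delta^{n-1}}^2$; the same estimate holds along the De Giorgi interpolants $\xvec_\sigma^n$ by monotonicity of $\sigma\mapsto\Falz(\xvec_\sigma^n)$, and inserting it into the exact energy identity \eqref{eq:Yosida_int} of Lemma \ref{lem:Yosida_discrete} produces the factor $1+2\tau_n\lambda$ (one $\tau_n\lambda$ from the endpoint term, one from the integral). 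You would need to replace your convexity-based step by some such dissipation argument.
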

\begin{proof}
Due to \eqref{eq:dfeir}, the gradient of the information functional $\Falz$ is given by
\begin{align*}
	\grad\Falz(\xvec) = 2\delta^{-1}(\grad\Halz(\xvec))^T\grad^2\Halz(\xvec) + (2\alpha-1)\La\grad\Halz(\xvec),
\end{align*}
which yields in combination with the $\La$-convexity of $\Halz$ and \eqref{eq:euler}
\begin{equation}\begin{split}\label{eq:step1}
	&\Halz(\xvec_\Delta^{n-1}) - \Halz(\xvec_\Delta^n) \\
	\geq& \tau_n\spr{\wgrad\Falz(\xvec_\Delta^n)}{\wgrad\Halz(\xvec_\Delta^n)} \\
	\geq& 2\tau_n\spr{\wgrad\Halz(\xvec_\Delta^n)}{\grad^2\Halz(\xvec)\wgrad\Halz(\xvec_\Delta^n)} + \tau_n(2\alpha-1)\La\nrm{\wgrad\Halz(\xvec_\Delta^n)}^2 \\
	\geq& 2\tau_n\La\nrm{\wgrad\Halz(\xvec_\Delta^n)}^2 + \tau_n(2\alpha-1)\La\nrm{\wgrad\Halz(\xvec_\Delta^n)}^2 
	\geq \tau_n(2\alpha+1)\La\nrm{\wgrad\Halz(\xvec_\Delta^n)}^2.
\end{split}\end{equation}
Using inequality \eqref{eq:HleqF}, we conclude in
\begin{align*}
	\big(1 + 2\tau_n(2\alpha+1)\La^2\big)\left(\Halz(\xvec_\Delta^n) - \Halz(\xvecm)\right) \leq \Halz(\xvec_\Delta^{n-1}) - \Halz(\xvecm)
\end{align*}
for any $n=1,2,\ldots$. Since $(2\alpha+1)\La^2=\lambda$, this shows \eqref{eq:expHn}.
To prove \eqref{eq:expFn}, we proceed as above. First introduce for $\sigma>0$ the vector $\xvec_\sigma$ as unique minimizier of
$\yvec\mapsto\frac{1}{2\sigma}\nrm{\yvec-\xvec_\Delta^n}^2+\Halz(\yvec)$. 
Then the fundamental property of the minimizing movement scheme reads as
\begin{align*}
	\Falz(\xvec_\Delta^n) - \Falz(\xvec_\sigma) 
	&\leq \frac{1}{2\tau_n}\left(\nrm{\xvec_\sigma-\xvec_\Delta^{n-1}}^2 - \nrm{\xvec_\Delta^n-\xvec_\Delta^{n-1}}^2 \right) \\
	&\leq \frac{1}{2\tau_n}\nrm{\xvec_\sigma-\xvec_\Delta^n}\left(\nrm{\xvec_\sigma-\xvec_\Delta^{n-1}} + \nrm{\xvec_\Delta^n-\xvec_\Delta^{n-1}} \right).
\end{align*}
Devide both side by $\sigma$ and pass to the limit as $\sigma\downarrow0$, we get
\begin{align*}
	\spr{\wgrad\Falz(\xvec_\Delta^n)}{\wgrad\Halz(\xvec_\Delta^n)} 
	\leq \frac{1}{\tau_n}\nrm{\wgrad\Halz(\xvec_\Delta^n)}\nrm{\xvec_\Delta^n-\xvec_\Delta^{n-1}},
\end{align*}
due to \eqref{eq:slprep}, and thanks to \eqref{eq:step1} further
\begin{align*}
	\tau_n(2\alpha+1)\La\nrm{\wgrad\Halz(\xvec_\Delta^n)}^2 \leq \nrm{\wgrad\Halz(\xvec_\Delta^n)}\nrm{\xvec_\Delta^n-\xvec_\Delta^{n-1}}.
\end{align*}
As a consequence, we get two types of inequalities, namely
\begin{equation}\begin{split}\label{eq:step2}
	\tau_n\sqrt{(2\alpha+1)\lambda}\nrm{\wgrad\Halz(\xvec_\Delta^n)} &\leq \nrm{\xvec_\Delta^n-\xvec_\Delta^{n-1}}
	\quad\textnormal{and} \\
	2\tau_n^2\lambda\big(\Falz(\xvec_\Delta^n) - \Falz(\xvecm)\big)
	&\leq \nrm{\xvec_\Delta^n-\xvec_\Delta^{n-1}}^2,
\end{split}\end{equation}
where we used $\La=\sqrt{\lambda/(2\alpha+1)}$ and \eqref{eq:FleqgradH}.
To get the desired estimate, remember the De Giorgi's variational interpolation:
Fix $\xvec_\Delta^{n-1}$ and denote by $\xvec_\sigma^n$ a minimizer of 
$\yvec\mapsto\frac{1}{2\sigma}\nrm{\yvec-\xvec_\Delta^{n-1}}^2 + \Falz(\yvec)$ for $\sigma\in(0,\tau_n]$. 
Then $\xvec_\sigma^n$ connects $\xvec_\Delta^{n-1}$ and $\xvec_\Delta^n$ and the monotonicity of $\sigma\mapsto\Falz(\xvec_\sigma^n)$
and \eqref{eq:step2} yields for any $\sigma\in(0,\tau]$
\begin{equation}\begin{split}\label{eq:step3}
	2\sigma^2\lambda\big(\Falz(\xvec_\Delta^n) - \Falz(\xvecm)\big)
	&\leq 2\tau_n^2\lambda\big(\Falz(\xvec_\sigma^n) - \Falz(\xvecm)\big) \\
	&\leq \nrm{\xvec_\sigma^n-\xvec_\Delta^{n-1}}^2.
\end{split}\end{equation}
Further remember the validity of \eqref{eq:Yosida_int} in Lemma \ref{lem:Yosida_discrete}, which gives us in this special case
\begin{align*}
	\Falz(\xvec_\Delta^n) + \frac{\nrm{\xvec_\Delta^n-\xvec_\Delta^{n-1}}^2}{2\tau_n} 
	+ \int_0^{\tau_n}\frac{\nrm{\xvec_\sigma^n-\xvec_\Delta^{n-1}}^2}{2\sigma^2}\dd \sigma 
	= \Falz(\xvec_\Delta^{n-1}).
\end{align*}
Inserting \eqref{eq:step3} in the above equation then finally yields
\begin{align*}
	\left(1+2\tau_n\lambda\right)\big(\Falz(\xvec_\Delta^n) - \Falz(\xvecm)\big)
	\leq \Falz(\xvec_\Delta^{n-1}) - \Falz(\xvecm),
\end{align*}
and the claim is proven.
\end{proof}
\begin{rmk}
In the continuous situation, the analogue proofs of \eqref{eq:expHn} and \eqref{eq:expFn} 
require a more deeper understanding of variational techniques. An essential tool in this context is the 
\emph{flow interchange lemma}, see f.i. \cite[Theorem 3.2]{MMS}.
Although one can easily proof a discrete counterpart of the flow interchange lemma, it is not essential in the above proof, 
since the smoothness of $\xvec\mapsto\Halz(\xvec)$ allow an explict calculation of its gradient and hessian.
\end{rmk}
Lemma \ref{lem:exp} paves the way for the exponential decay rates of Theorem \ref{thm:main2}. 
Evectively, \eqref{eq:expH} and \eqref{eq:expF} are just applications of the following version of 
the dicrete Gronwall lemma: Assume $\{c_n\}_{n\in\setN}$ and $\{y_n\}_{n\in\setN}$ to be sequences with values in $\setR_+$, satisfying 
$(1+c_n)y_n \leq y_{n-1}$ for any $n\in\setN$, then 
\begin{align*}
	y_n \leq y_0 e^{-\sum_{k=0}^{n-1} \frac{c_k}{1+c_k}},\quad\textnormal{for any } n\in\setN.
\end{align*}
This statement can be easily proven by induction. 
Furthermore, inequality \eqref{eq:CK} is then a corollary of \eqref{eq:expH} and a Csiszar-Kullback inequality, see \cite[Theorem 30]{CaJuMa}.

%
\subsubsection{Convergence towards Barenblatt profiles and Gaussians}
Assume again $\lambda>0$. 
It is an striking fact (see \cite{DMfourth}), that the stationary solutions $\uinf$ and $\vinf$ of \eqref{eq:fofo} and \eqref{eq:heat}, respecively, are identical. 
Those stationary states 
have the form of Barenblatt profils or Gaussians, respectively,
\begin{gather*}
	\bal = \big(a-b|x|^2\big)_+^{1/(\alpha-1/2)},\quad b = \frac{\alpha-1/2}{\sqrt{2\alpha}}\La \quad\textnormal{if } \alpha> 1/2 \textnormal{ and}\\
	\bhl = a e^{-\Lh|x|^2} \quad\textnormal{if } \alpha=1/2,
\end{gather*}
where $a\in\setR$ is chosen to conserve unit mass. 

To prove the statement of Theorem \ref{thm:main1}, we are going to show that 
the sequence of functionals $\Halzd{\cdot}:\dens\to(-\infty,+\infty]$ given by
\begin{align*}
	\Halzd{u} := \begin{cases} \Hal(u) &, u\in\densN \\ +\infty &,\textnormal{else}\end{cases}
\end{align*}
$\Gamma$-congerves towards $\Hal$. More detailed, for any $u\in\dens$ it holds
\begin{enumerate}
	\item[(i)] $\liminf_{\delta\to0} \Halzd{u_\delta}\geq\Hal(u)$ for any sequence $u_\delta$ with $\lim_{\delta\to0}\wass(u_\delta,u)=0$.
	\item[(ii)] There exists a \emph{recovery sequence} $u_\delta$ of $u$, i.e. $\limsup_{\delta\to0} \Halzd{u_\delta}\leq\Hal(u)$
				and $\lim_{\delta\to0}\wass(u_\delta,u)=0$.
\end{enumerate}
The $\Gamma$-convergence of $\Halzd{\cdot}$ towards $\Hal$ is a powerful property, 
since it implies convergence of the sequence of minimizers $\um = \cf_\theh[\xvecm]$ towards $\bal$ or $\bhl$, repsectively, w.r.t. the $L^2$-Wasserstein metric,
see \cite{Braides}.
To conclude even strong convergence of $\um$ at least in $L^p(\Omega)$ for arbitrary $p\geq 1$,
we proceed similar as in \cite[Proposition 18]{dde}.
Necessary for that, recall that the total variation of a function $f\in L^1(\Omega)$ is given by
\begin{align}
  \label{eq:defTV}
  \tv{f} := 
	\sup \bigg\{ \intom f(x)\varphi'(x)\dd x \,\bigg|\, \varphi\in\operatorname{Lip}(\Omega)\textnormal{ with compact support},\,\sup_{x\in\Omega} |\varphi(x)|\le 1\bigg\},
\end{align}
we refer \cite[Definition 1.1]{Giusti}.
If $f$ is a piecewise constant function with compact support $[x_0,x_K]$, taking values $f_\kmh$ on intervals $(x_{k-1},x_k]$,
then the integral in \eqref{eq:defTV} amounts to
\begin{align*}
  \intom f(x)\varphi'(x)\dd x = \sum_{k=1}^K \big[f(x)\varphi(x)\big]_{x=x_{k-1}+0}^{x_k-0}
  = \sum_{k=1}^{K-1} (f_\kmh-f_\kph)\varphi(x_k) + f_\imh\varphi(x_0) - f_\Kmh\varphi(x_K).
\end{align*}
Consequently, for such $f$, the supremum in \eqref{eq:defTV} equals
\begin{align}
  \label{eq:ttv}
  \tv{f} = \sum_{k=1}^{K-1}|f_\kph-f_\kmh| + |f_\imh| + |f_\Kmh|
\end{align}
and is attained 
for every $\varphi\in\operatorname{Lip}(\Omega)$ with $\varphi(x_k)=\operatorname{sgn}(f_k-f_{k+1})$ at $k=1,\ldots,K-1$,
$\varphi(x_0) = \operatorname{sgn}(f_\imh)$ and $\varphi(x_K) = -\operatorname{sgn}(f_\Kmh)$.
\begin{lem}\label{lem:Lconv}
For any $\alpha\in[\frac{1}{2},1]$, 
assume $\xvecm\in\xseqN$ to be the unique minimizer of $\Halz$ and declare the sequence of functions $\um = \cf_\theh[\xvecm]$.
Then
\begin{align}
	\um \xrightarrow{\delta\to0} \bal &,\textnormal{ strongly in } L^p(\Omega) 
	\textnormal{ for any } p\geq 1, \label{eq:Lconv2} \\
	\umh \xrightarrow{\delta\to0} \bal &,\textnormal{ uniformly on } \Omega, \label{eq:uniform2}
\end{align}
where $\umh:\Omega\to\setR$ is a local affine interpolation of $\um$ on $\Omega$, such that for any $\kappa\in\hval\cup\ival$
\begin{align*}
	\big(\umh\circ\cX_\theh[\xvecm]\big)(\xi_\kappa) 
	=  z_\kappa := \begin{cases} z_\kappa, &\kappa\in\hval \\ 
															\frac{1}{2}(z_\kappp+z_\kappm),& \kappa\in\ival \end{cases}.
\end{align*}
\end{lem}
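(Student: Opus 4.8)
The goal is to upgrade the Wasserstein convergence $\wass(\um,\bal)\to0$ (which comes from $\Gamma$-convergence of $\Halzd{\cdot}$ to $\Hal$ together with the uniqueness of minimizers) to strong $L^p$-convergence of $\um$ and uniform convergence of the interpolant $\umh$. I would follow the strategy of \cite[Proposition 18]{dde}. The first step is to establish the $\Gamma$-convergence statements (i) and (ii): the $\liminf$-inequality follows from lower semicontinuity of $\Hal$ w.r.t. weak convergence (which $\wass$-convergence on a fixed mass class implies, after noting that the quadratic confinement term is continuous along $\wass$-convergent sequences with uniformly bounded second moments), while the recovery sequence is built by pushing $u$ through the Lagrangian discretization, i.e. taking $u_\delta=\cf_\theh[\xvec]$ where $\xvec$ samples the pseudo-inverse $\theX$ of $u$ at the nodes $\xi_k$; Jensen's inequality for $\psia$ gives $\Halzd{u_\delta}\le\Hal(u)$ and $\wass(u_\delta,u)\to0$ is elementary from the explicit Lagrangian formula for $\wass$. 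By \cite{Braides}, this yields $\wass(\um,\bal)\to0$.

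\textbf{From Wasserstein to $L^p$.} The second step is to obtain a uniform (in $\delta$) total variation bound on $\um$. Here I would exploit that $\um=\cf_\theh[\xvecm]$ with $\xvecm$ the minimizer of $\Halz$: since $\bal$ (resp. $\bhl$) is unimodal — increasing then decreasing — one expects the discrete minimizer to inherit a discrete unimodality, so that in the formula \eqref{eq:ttv} the interior sum $\sum_k|z_\kph-z_\kmh|$ telescopes up to the maximal value of $\um$, which is bounded because the $\wass$-limit $\bal$ is bounded and one has the a priori control on $z^*$ from the entropy bound (compare \eqref{eq:zbound}); the boundary terms $|z_\imh|,|z_\Kmh|$ are likewise controlled. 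Thus $\tv{\um}\le C$ uniformly. Combined with $\wass(\um,\bal)\to0$, a standard compactness argument (BV functions with uniformly bounded TV and uniformly bounded support, the latter from the support-propagation remark and the fact that $\bal$ is compactly supported) gives a subsequence converging in $L^1$; the limit must be $\bal$ by uniqueness, so the whole sequence converges in $L^1(\Omega)$. Interpolating the $L^1$-bound with the uniform $L^\infty$-bound on $\um$ upgrades this to $L^p$ for every $p\ge1$, giving \eqref{eq:Lconv2}.

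\textbf{Uniform convergence of the interpolant.} For \eqref{eq:uniform2}, I would note that $\umh$ is the piecewise-affine (in the variable $x$, via the Lagrangian map) interpolant of the nodal values $z_\kappa$, hence monotone on the increasing and decreasing branches just as $\um$ is; its total variation is therefore still bounded by essentially $2\sup\um$, and $\|\umh-\um\|_{L^1}\to0$ because the two differ only by the affine corrections across the (shrinking, as $\delta\to0$) cells, each of $L^1$-size $O(\delta\cdot\mathrm{osc})$ summing to $O(\delta\cdot\tv{\um})$. A function that is uniformly bounded in $BV(\Omega)$ and converges in $L^1$ to the continuous function $\bal$, being itself continuous, must converge uniformly: indeed equi-boundedness in $BV$ gives a uniform modulus of continuity off a small set, and one closes the argument with the continuity of the limit. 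This yields \eqref{eq:uniform2}.

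\textbf{Main obstacle.} The delicate point is the uniform $TV$ bound on $\um$, and in particular justifying the discrete unimodality of the minimizer $\xvecm$ (or otherwise extracting the $TV$ bound directly from the Euler–Lagrange equations / from $\Falzm=\Falz(\xvecm)<\infty$ via a summation-by-parts estimate on $\sum(z_\kph^{\alpha+1/2}-z_\kmh^{\alpha+1/2})^2$). Everything else — the $\Gamma$-convergence, the interpolation between $L^1$ and $L^\infty$, and the $BV$-to-uniform passage — is routine once that bound is in hand.
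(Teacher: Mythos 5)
Your overall skeleton ($\Gamma$-convergence $\Rightarrow$ $\wass$-convergence, then a uniform total-variation bound to upgrade to $L^p$, then a separate argument for the interpolant) matches the paper's, and the $\Gamma$-convergence part is essentially identical. But there are two genuine gaps in the second and third steps.

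First, the TV bound. You hang it on a ``discrete unimodality'' of $\xvecm$ that you do not prove and that is not obviously true; you yourself flag it as the main obstacle. The paper avoids this entirely: it does not bound $\tv{\um}$ but rather $\tv{\prss(\um)}$ with $\prss(s)=\Theta_\alpha s^{\alpha+1/2}$, and it gets this bound for free from the variational structure. The key point you miss is that $\xvecm$, being the minimizer of $\Halz$, satisfies $\wgrad\Halz(\xvecm)=0$, so the discrete entropy-information relation \eqref{eq:dfeir} gives $\Falz(\xvecm)=(2\alpha-1)\La\Halz(\xvecm)$, which is uniformly bounded in $\delta$ once $\Halz(\xvecm)\to\Hal(\bal)$ is known from the $\Gamma$-convergence. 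Then, testing $\wgrad\Hanz(\xvecm)$ against an arbitrary $\yvec$ with $\|\yvec\|_\infty\le1$ and changing variables turns $\spr{\wgrad\Hanz(\xvecm)}{\yvec}$ into $-\int\prss(\um)\varphi_x\dd x$ for a Lipschitz test function $\varphi$ with $|\varphi|\le1$, and Cauchy--Schwarz bounds this by $\nrm{\wgrad\Halz(\xvecm)}+\La\nrm{\xvecm}$ --- exactly the quantities controlled by $\Falz(\xvecm)$. This is the ``summation-by-parts'' route you mention parenthetically but do not carry out; it is the whole proof, and the superlinearity of $\prss$ together with \cite[Proposition 1.19]{Giusti} then yields \eqref{eq:Lconv2}. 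Without it, your argument has no uniform bound at all.

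Second, the passage to uniform convergence is wrong as stated. A sequence of continuous functions with uniformly bounded total variation that converges in $L^1$ to a continuous limit need \emph{not} converge uniformly: the piecewise affine spikes $f_\delta(x)=\max(0,1-\delta^{-1}|x|)$ have $\tv{f_\delta}=2$ and $f_\delta\to0$ in $L^1$, yet $f_\delta(0)=1$. A uniform BV bound does not give equicontinuity, so ``uniform modulus of continuity off a small set'' cannot close the argument. The paper instead proves
\begin{align*}
  \|\umh\|_{H^1(\Omega)}^2 \le \frac{4\alpha\,\|\umh\|_{L^\infty(\Omega)}^{2(1-\alpha)}}{(\alpha+\tfrac12)^2}\,\Falz(\xvecm),
\end{align*}
using a concavity estimate to compare $\frac{z_\kph+z_\kmh}{2}(z_\kph-z_\kmh)^2$ with $(z_\kph^{\alpha+1/2}-z_\kmh^{\alpha+1/2})^2$; in one dimension the uniform $H^1$ bound gives a uniform H\"older-$\tfrac12$ modulus, i.e.\ genuine equicontinuity, and only then does $L^p$-convergence upgrade to \eqref{eq:uniform2}. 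You need to replace your BV argument by this (or an equivalent equicontinuity) estimate.
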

\begin{proof}
We will first prove the $\Gamma$-convergence of $\Halzd{\cdot}$ towards $\Hal$.
The first requirement $(i)$ is a trivial conclusion of the the lower semi-continuity of $\Hal$. 
For the second point $(ii)$, we fix $u\in\dens$ and assume $\theX:[0,M]\to[-\infty,+\infty]$ 
to be the Lagrangian map of $u$. Further introduce the projection map $\pi_\theh:\xspc\to\xspc_\theh$ on the space of Lagrangian maps, 
\begin{align*}
	\pi_\theh[\theX] = \sum_{k=0}^K \theX(\xi_k)\hatf_k(\xi), 
\end{align*}
where $\hatf_\kappa: [0,M]\to\Omega$ are local affine hat-functions with $\hatf_\kappa(\iota\delta)=\delta_\kappa^\iota$ for any $\kappa,\iota\in\hval\cup\ival$.
We claim that the corresponding sequence $u_\delta$ to $\pi_\theh[\theX]$ is the right choice for the recovery sequence. 
To prove the convergence in the $L^2$-Wasserstein metric, we fix $\eps>0$
and take a compact set $\mathcal{K}\subseteq[-L,L]\subseteq\Omega$ with $\int_{\mathcal{K}}|x|^2u(x)\dd x <\eps$ and $\int_{\mathcal{K}}|x|^2u_\theh(x)\dd x <\eps$,  
which can be done due to the boundedness of the second momentum.
Since $\theX$ and $\pi_\theh[\theX]$ are monotonically increasing, it holds for any $\xi\in [0,M]$
\begin{align*}
	|\theX(\xi)-\pi_\theh[\theX](\xi)| \leq \big(\theX(\xi_k)-\theX(\xi_{k-1})\big), 
	\textnormal{ with } \xi\in[\xi_{k-1},\xi_k], k=1,\ldots,K
\end{align*}
and further for $\delta\leq \eps(2L)^{-2}$
\begin{align*}
	&\|\theX-\pi_\theh[\theX]\|_{L^2([0,M])}^2 \\
	\leq &\|\theX-\pi_\theh[\theX]\|_{L^2(\theX^{-1}([-L,L]))}^2 + \|\theX-\pi_\theh[\theX]\|_{L^2([0,M]\backslash\theX^{-1}([-L,L]))}^2\\
	\leq &2L\delta\sum_{\substack{k=1,\ldots,K \\ \theX(\xi_k)\in[-L,L]}} \big|\theX(\xi_k)-\theX(\xi_{k-1})\big| \\
			 &+ \left(\|\theX\|_{L^2([0,M]\backslash\theX^{-1}([-L,L]))}
			 + \|\pi_\theh[\theX]\|_{L^2([0,M]\backslash\theX^{-1}([-L,L]))}\right)^2 \\
	\leq &4L^2\delta + 8\eps \leq 9\eps.
\end{align*}
This shows $u_\delta\to u$ in $L^2$-Wasserstein, as $\delta\to0$. 
The second point $(2)$ easily follows by using Jensen's inequality,
\begin{align*}
	\Halzd{u_\delta} &= \Hal(u_\delta) = \sum_{\kappa\in\hval}\int_{x_{\kappm}}^{x_\kappp} \phia\left(\frac{\delta}{x_\kappp-x_\kappm}\right) \dd x \\
	&= \sum_{\kappa\in\hval} (x_\kappp-x_\kappm) \phia\left(\frac{1}{x_\kappp-x_\kappm}\int_{x_\kappm}^{x_\kappp} u(s)\dd s\right) \\
	&\leq \sum_{\kappa\in\hval} \int_{x_\kappm}^{x_\kappp} \phia\big(u(s)\big)\dd s
	= \Hal(u).
\end{align*}
Taking the limes superior on both sides proves $\limsup_{\delta\to0}\Halzd{u_\delta} \leq \Hal(u)$ and since $\Hal$ is lower semi-continuous, 
we especially obtain $\lim_{\delta\to0}\Halzd{u_\delta} = \Halz(u)$.
Due to the the equi-coercivity of $\Halzd{\cdot}$
and $\inf_{u\in\dens}\Halzd{u}=\Halz(\xvecm)$, $\um$ converges towards $\bal$ in w.r.t. $\wass$, by \cite[Theorem 1.21]{Braides}

The convergence of $\Halz(\xvecm)$ to $\Hal(\bal)$ yields on the one hand the uniform boundedness of $\Halz(\xvecm)$ w.r.t. 
the spatial discretization parameter $\delta$, and on the other hand the 
uniform boundedness of $\Falz(\xvecm)$, which is a conclusion of \eqref{eq:dfeir} and $\wgrad\Halz(\xvec)=0$.
Similar to \cite[Proposition 18]{dde}, one can easily prove that the term $\Falz(\xvecm)$ is an upper bound on 
the total variation of $\prss(\um)$ with $\prss(s):=\Theta_\alpha s^{\alpha+1/2}$:
Take any arbitrary $\yvec\in\setR^{K+1}$ with $\|\yvec\|_{\infty}\leq1$ and define $\cY = \cX_\theh[\yvec]$. Then
\begin{align}\label{eq:TVstep1}
	\spr{\wgrad\Hanz(\xvecm)}{\yvec} = \spr{\wgrad\Halz(\xvecm)}{\yvec} - \La\spr{\xvecm}{\yvec},
\end{align}
and the left hand side can be reformulated, using \eqref{eq:gradH} and a change of variables, i.e.
\begin{align*}
	\spr{\wgrad\Hanz(\xvecm)}{\yvec}
	&= -\delta\sum_{\kappa\in\hval}\prss(z_\kappa)\frac{y_\kappp-y_\kappm}{\delta}
	= -\int_0^M \prss(\um\circ\cX_\Delta^{\operatorname{min}}) \partial_\xi \cY \dd\xi \\
	&= -\int_{x_0}^{x_K} \prss(\um) \widetilde{\varphi}_x \dd x
\end{align*}
with the Lipschitz-continuous function $\widetilde{\varphi}:[x_0,x_K]\to[-1,1]$ defined by 
$\widetilde{\varphi}(x) := \cX_\theh[\yvec]\circ(\cX_\Delta^{\operatorname{min}})^{-1}$. 
Since $\um$ is equal to zero in $\Omega\backslash[x_0,x_K]$, we can define any extension $\varphi:\Omega\to[-1,1]$ of $\widetilde{\varphi}$
with compact support, and exchange it in the above calculation without changing the value of the integral. 
Moreover, by the Cauchy-Schwarz inequality, $\nrm{\yvec}\leq\|\yvec\|_\infty\leq 1$ and \eqref{eq:TVstep1}, 
\begin{align*}
	\intom \prss(\um) \varphi_x \dd x \leq \nrm{\wgrad\Halz(\xvecm)} + \La\nrm{\xvecm},
\end{align*}
which is uniformly bounded from above, due to \eqref{eq:dfeir} and the uniform boundedness of $\Halz(\xvecm)$ and $\Falz(\xvecm)$. 
This proves the uniform boundedness of $\tv{\prss(\um)}$ and 
using the superlinear growth of $s\mapsto\prss(s)$ and \cite[Proposition 1.19]{Giusti}, we conclude in \eqref{eq:Lconv2}.
 
To proof \eqref{eq:uniform2}, we show that the $H^1(\Omega)$-norm of $\umh$ is bouned by the information functional.
This was already done in \cite{dlssv3} for $\alpha=\frac{1}{2}$, where we also showed
\begin{align*}
	\|\umh\|_{H^1(\Omega)}^2 = \delta\sum_{k\in\ival} \frac{z_\kph+z_\kmh}{2}\left(\frac{z_\kph - z_\kmh}{\delta}\right)^2.
\end{align*}
So assume $\alpha\in(\frac{1}{2},1]$, then the concavity of the mapping $s\mapsto s^{\alpha-1/2}$ yields
for any values $b>a>0$
\begin{align*}
	b^{\alpha+\frac{1}{2}} - a^{\alpha+\frac{1}{2}} = (\alpha+\tfrac{1}{2})\int_a^b s^{\alpha-\frac{1}{2}} \dd s
	\geq (\alpha+\tfrac{1}{2}) \frac{b^{\alpha-\frac{1}{2}} + a^{\alpha-\frac{1}{2}}}{2}(b-a),
\end{align*}
and further $(b^{\alpha+\frac{1}{2}} - a^{\alpha+\frac{1}{2}})^2 \geq (\alpha+\tfrac{1}{2})^2 \frac{b^{2\alpha-1} + a^{2\alpha-1}}{4}(b-a)^2$.
Therefore
\begin{align*}
	\|\umh\|_{H^1(\Omega)}^2 
	&= \delta\sum_{k\in\ival} \frac{z_\kph+z_\kmh}{2}\left(\frac{z_\kph - z_\kmh}{\delta}\right)^2 \\
	&\leq \|\umh\|_{L^\infty(\Omega)}^{2(1-\alpha)}\delta
			\sum_{k\in\ival}\frac{z_\kph^{2\alpha-1}+z_\kmh^{2\alpha-1}}{2}\left(\frac{z_\kph - z_\kmh}{\delta}\right)^2 \\
	&\leq \frac{2 \|\umh\|_{L^\infty(\Omega)}^{2(1-\alpha)}}{(\alpha+\tfrac{1}{2})^2}\delta
			\sum_{k\in\ival}\left(\frac{z_\kph^{\alpha+\frac{1}{2}} - z_\kmh^{\alpha+\frac{1}{2}}}{\delta}\right)^2 
	\leq \frac{4\alpha \|\umh\|_{L^\infty(\Omega)}^{2(1-\alpha)}}{(\alpha+\tfrac{1}{2})^2}\Falz(\xvecm).
\end{align*}
\end{proof}

%
\subsection{The case of zero confinement $\lambda=0$}\label{sec:confn}
We will now consider equation \eqref{eq:fofo} in case of vanishing confinement $\lambda=0$, hence
\begin{align}\label{eq:fofo0}
	\partial_t u = -\big(u (u^{\alpha-1}u_{xx}^\alpha)_x\big)_x, \quad\textnormal{for } (t,x)\in(0,+\infty)\times\Omega,
\end{align}
and $u(0)=u^0$ for arbitrary initial density $u^0\in\dens$.
From the continuous theory, it is known that solutions to \eqref{eq:fofo0} or \eqref{eq:heat} with $\La=0$ 
branches out over the whole set of real numbers $\Omega=\setR$, hence converges towards zero 
at a.e. point. 
This matter of fact makes rigorous analysis of the long-time behaviour of solutions to \eqref{eq:fofo0} 
more difficult as in the case of positive confinement.
However, the unperturbed functionals $\Han$ and $\Fan$ hold the scaling property, see again \cite{MMS},
\begin{align}\label{eq:scaling}
	\Han(\ds_r u) = r^{-(2\alpha-1)/2}\Han(u),\quad\textnormal{and}\quad 
	\Fan(\ds_r u) = r^{-(2\alpha+1)}\Fan(u),
\end{align}
for any $r>0$ and $\ds_r u(x) := r^{-1} u(r^{-1}\cdot)$ with $u\in\dens$. 
Due to this, it is possible to find weak solutions to a rescaled version of \eqref{eq:fofo0} 
by solving problem \eqref{eq:fofo} with $\lambda=1$. More detailed, it holds the following Lemma:
\begin{lem}
A function $u\in\Lloc^2((0,T);W^{2,2}(\Omega))$ is a weak solution of \eqref{eq:fofo} with $\lambda=1$, iff
\begin{align}\label{eq:defR}
	w(t,\cdot) = \ds_{R(t)} u(\log(R(t),\cdot),\quad\textnormal{with}\quad 
	R(t) := \big(1+(2\alpha+3) t\big)^{1/(2\alpha+3)}
\end{align}
is a weak solution to \eqref{eq:fofo0}.
\end{lem}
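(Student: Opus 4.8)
The plan is to establish the equivalence by the self-similar change of variables hidden in \eqref{eq:defR}, carrying it out first for classical solutions in order to fix the exponents and then transporting the computation to the weak formulation. Write $R=R(t)$ and note from \eqref{eq:defR} that $R(t)^{2\alpha+3}=1+(2\alpha+3)t$, so that $\dot R=R^{-(2\alpha+2)}$ and $\frac{\dd}{\dd t}\log R(t)=\dot R/R=R^{-(2\alpha+3)}$. Set $s=\log R(t)$, a smooth increasing bijection between the relevant time intervals with $s=0$ at $t=0$, and $y=x/R(t)$; the ansatz \eqref{eq:defR} then reads $w(t,x)=R(t)^{-1}\,u\big(s(t),x/R(t)\big)$. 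Abbreviate the fourth-order operator by $N[v]:=-\big(v\,(v^{\alpha-1}\,\partial_{xx}(v^\alpha))_x\big)_x$, so that \eqref{eq:fofo0} reads $\partial_t w=N[w]$ and \eqref{eq:fofo} with $\lambda=1$ reads $\partial_s u=N[u]+(yu)_y$.

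First I would differentiate the ansatz in $t$, using $\partial_t(x/R)=-y\dot R/R$ and $(yu)_y=u+yu_y$; collecting terms and inserting $\dot R=R^{-(2\alpha+2)}$ gives
\[
  \partial_t w \;=\; R^{-2}\dot R\,\big(\partial_s u-(yu)_y\big) \;=\; R^{-(2\alpha+4)}\big(\partial_s u-(yu)_y\big).
\]
For the right-hand side I would record the scaling of $N$ under a dilation: since $(\ds_R v)^\alpha=R^{-\alpha}v^\alpha(\cdot/R)$ and every spatial derivative contributes a factor $R^{-1}$, counting the total power of $v$ (which is $2\alpha$) and the number of derivatives (which is $4$) yields $N[\ds_R v](x)=R^{-(2\alpha+4)}N[v](x/R)$ — the infinitesimal counterpart of the identity $\Fan(\ds_r u)=r^{-(2\alpha+1)}\Fan(u)$ from \eqref{eq:scaling}. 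Substituting both displays into $\partial_t w=N[w]$ and cancelling the common factor $R^{-(2\alpha+4)}$ leaves $\partial_s u-(yu)_y=N[u]$, which is precisely \eqref{eq:fofo} with $\lambda=1$ in the variables $(s,y)$. Since $t\mapsto s$ is invertible and $\ds_R^{-1}=\ds_{1/R}$, this chain of equivalences may be read in either direction, which is the claimed ``iff''.

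To upgrade this to weak solutions I would carry out the same substitution inside the weak formulation: given a test function $\psi\in C_c^\infty$ in the $(s,y)$-variables for the $u$-equation, the function $\varphi(t,x):=\psi\big(s(t),x/R(t)\big)$ is admissible for the $w$-equation, and the substitutions $x=R(t)y$, $\dd s=R(t)^{-(2\alpha+3)}\dd t$ transform each term of the one integral identity into the corresponding term of the other, the reciprocal powers of $R$ coming from the derivatives of $\varphi$ cancelling exactly against those produced above. The one point I expect to require genuine care, rather than bookkeeping, is verifying that the rescaling does not leave the regularity class $\Lloc^2((0,T);W^{2,2}(\Omega))$: one has to relate the $W^{2,2}$-norm of $u(s,\cdot)$ to that of $w(t,\cdot)$ through $\ds_R$ and check that the time weight $R^{-(2\alpha+3)}$ coming from $\dd s/\dd t$ preserves local square-integrability in time on the image interval. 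As $R$ is continuous and bounded above and away from $0$ on every compact time interval, this holds; everything else is dictated by the exponents already appearing in \eqref{eq:scaling}.
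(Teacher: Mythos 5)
Your proposal is correct: the paper states this lemma without giving a proof (it leans on the analogous rescaling computation in [MMS]), and your direct verification via the self-similar change of variables is exactly the intended argument — the exponent count giving $N[\ds_R v]=R^{-(2\alpha+4)}N[v](\cdot/R)$ and the identity $\partial_t w=R^{-2}\dot R\big(\partial_s u-(yu)_y\big)=R^{-(2\alpha+4)}\big(\partial_s u-(yu)_y\big)$ both check out, so the two factors of $R^{-(2\alpha+4)}$ cancel and the confined and unconfined equations are exchanged. The transfer to the weak formulation by rescaling test functions and the remark that $R$ is bounded above and away from zero on compact time intervals (so the class $\Lloc^2((0,T);W^{2,2}(\Omega))$ is preserved) complete the argument at the level of rigor the paper itself employs.
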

A consequence of the above Lemma is, that one can describe how solutions $w$ to \eqref{eq:fofo0} 
vanishes asymptotically as $t\to\infty$, although the gained information is not very strong and useful:
In fact the first observation (without studying local asymptotics in more detail) is, 
that $w$ decays to zero with the same rate as the rescaled (time-dependent) Barenblatt-profile $\bant$ defined by $\bant(t,\cdot) := \ds_{R(t)}\bai$,
with $R(t)$ of \eqref{eq:defR}. It therefore exists a constant $C>0$ just depending on $\Han(w^0)=\Han(u^0)$ with
\begin{align}\label{eq:Rdecay}
	\|w(t,\cdot) - \bant(t,\cdot)\|_{L^1(\Omega)} \leq C R(t)^{-1},
\end{align}
for any $t>0$. 
In \cite{MMS}, this behaviour was described using weak solutions constructed by minimizing movements. 
We will adopt this methodes to derive a discrete analogue of \eqref{eq:Rdecay} for our discrete solutions $\xvec_\Delta$ of \eqref{eq:dmm}.

First of all, we reformulate the scaling operator $\ds_r$ for fixed $r>0$ in the setting of monotonically increasing vectors $\xvec\in\xseqN$.
Since $\ds_r u(x) := r^{-1} u(r^{-1}\cdot)$ for arbitrary density in $\dens$, the same can be done for $u_\delta=\cf_\theh[\xvec]$, hence
\begin{align*}
	\ds_r u_\delta(x) = \sum_{k=1}^K \frac{r^{-1}\delta}{x_k-x_{k-1}}\indy_{(x_\kappm,x_\kappp]}(r^{-1}x)
	= \sum_{k=1}^K \frac{\delta}{rx_k-rx_{k-1}}\indy_{(rx_\kappm,rx_\kappp]}(x)
	= \cf_\theh[r\xvec](x)
\end{align*}
for any $x\in\Omega$. The natural extension of $\ds_r$ to the set $\xseqN$ is hence 
\begin{align*}
	\ds_r\xvec := r\xvec ,\quad\textnormal{with corrseponding}\quad \ds_r\zvec = \cz_\theh[\ds_r\xvec] = r^{-1}\zvec.
\end{align*}
As a consequence of this definition, we note that it holds the discrete scaling property for $\Hanz$ and $\Fanz$, i.e.
for any $r>0$ and $\xvec\in\xseqN$,
\begin{align}\label{eq:dscaling}
	\Hanz(\ds_r\xvec) = r^{-(2\alpha-1)/2}\Hanz(\xvec),\quad\textnormal{and}\quad 
	\Fanz(\ds_r\xvec) = r^{-(2\alpha+1)}\Fan(\xvec).
\end{align}
The first equality holds, due to $\Hanz(\xvec) = \Han(\cf_\theh[\xvec])$ and the scaling property \eqref{eq:scaling} of the continuous entropy functions.
The analogue claim for $\Fanz$ in \eqref{eq:dscaling} follows by inserting $\ds_r\xvec$ into $\grad\Hanz$
and using $\ds_r\zvec = r^{-1}\zvec$, then
\begin{align*}
	&\grad\Hanz(\ds_r\xvec) = \Theta_\alpha \delta\sum_{\kappa\in\hval}\big(\ds_r\zvec_\kappa\big)^{\alpha+\frac{1}{2}}\frac{\ee_\kappm-\ee_\kappp}{\delta}
	= r^{-(\alpha+1/2)}\grad\Hanz(\xvec) \\
	\Longrightarrow\;&
	\Fanz(\ds_r\xvec) = \nrm{\wgrad\Hanz(\ds_r\xvec)}^2 = r^{-(2\alpha+1)}\nrm{\wgrad\Hanz(\xvec)}^2 = r^{-(2\alpha+1)}\Fanz(\xvec).
\end{align*}

This scaling properties can now be used to build a bridge between solutions of discrete minimizing movement schemes with $\lambda=0$
to those with positive confinement. The following Lemma is based on the proof of Theorem \cite[Theorem 5.6]{MMS},
but nevertheless, it is an impressive example for the powerful structure-preservation of our discrete scheme.
\begin{lem}\label{lem:dmm_scaling}
Assume $\xvec^*\in\xseqN$ with $\Falz(\xvec^*)<\infty$. 
Further fix $\tau>0$ and $R>S>0$. 
Then $\xvec\in\xseqN$ is a minimizer of
\begin{align}\label{eq:dmm3}
	\yvec\mapsto\Fy(\lambda,\tau,\yvec,\xvec^*) = \frac{1}{2\tau}\nrm{\yvec-\xvec^*}^2 + \Fanz(\yvec) + \frac{\lambda}{2}\nrm{\yvec}^2,
\end{align}
if and only if $\ds_R\xvec\in\xseqN$ minimizes the functional
\begin{gather}\begin{split}\label{eq:dmmds}
	\wvec\mapsto\Fy(\lt,\taut,\wvec,\ds_S\xvec^*) = \frac{1}{2\taut}\nrm{\wvec-\ds_S\xvec^*}^2 + \Fanz(\wvec) + \frac{\lt}{2}\nrm{\wvec}^2, 
	\quad\textnormal{with}, \\
	\taut = \tau S R^{2\alpha+2},\quad \lt = \frac{S(1+\lambda\tau) - R}{\taut R}
\end{split}\end{gather}
\end{lem}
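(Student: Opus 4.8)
The plan is to verify the equivalence by a direct change of variables in the minimization functional, mimicking the continuous rescaling argument of \cite[Theorem 5.6]{MMS} but exploiting that in the discrete Lagrangian setting the scaling operator is simply $\ds_r\xvec = r\xvec$. First I would substitute $\wvec = \ds_R\xvec = R\xvec$ into the functional in \eqref{eq:dmmds} and use the discrete scaling property \eqref{eq:dscaling}, namely $\Fanz(R\xvec) = R^{-(2\alpha+1)}\Fanz(\xvec)$, together with the elementary identities $\nrm{R\xvec}^2 = R^2\nrm{\xvec}^2$ and $\nrm{R\xvec - S\xvec^*}^2 = R^2\nrm{\xvec - \tfrac{S}{R}\xvec^*}^2$. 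This turns $\Fy(\lt,\taut,R\xvec,\ds_S\xvec^*)$ into an expression of the form $\frac{R^2}{2\taut}\nrm{\xvec - \tfrac{S}{R}\xvec^*}^2 + R^{-(2\alpha+1)}\Fanz(\xvec) + \frac{\lt R^2}{2}\nrm{\xvec}^2$, and since $R$, $S$, $\taut$ are fixed positive constants, minimizing this over $\xvec\in\xseqN$ is equivalent (up to the positive multiplicative factor $R^{-(2\alpha+1)}$) to minimizing $R^{2\alpha+3}/(2\taut)\nrm{\xvec - \tfrac{S}{R}\xvec^*}^2 + \Fanz(\xvec) + \tfrac12 R^{2\alpha+3}\lt\nrm{\xvec}^2$.

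Next I would match this with the target functional \eqref{eq:dmm3}, i.e. $\frac{1}{2\tau}\nrm{\xvec-\xvec^*}^2 + \Fanz(\xvec) + \frac{\lambda}{2}\nrm{\xvec}^2$. A naive comparison of the quadratic penalty terms fails because the centre of the penalty in the rescaled problem is $\tfrac{S}{R}\xvec^*$ rather than $\xvec^*$; the resolution is to expand $\nrm{\xvec - \tfrac{S}{R}\xvec^*}^2 = \nrm{\xvec-\xvec^*}^2 + 2\langle \xvec-\xvec^*, (1-\tfrac{S}{R})\xvec^*\rangle_\delta + (1-\tfrac{S}{R})^2\nrm{\xvec^*}^2$ — wait, that still leaves a cross term. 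The cleaner route is to combine the two quadratic-in-$\xvec$ contributions $\frac{R^{2\alpha+3}}{2\taut}\nrm{\xvec-\tfrac{S}{R}\xvec^*}^2 + \tfrac12 R^{2\alpha+3}\lt\nrm{\xvec}^2$ into a single completed square $\frac{A}{2}\nrm{\xvec - c\,\xvec^*}^2 + \text{const}$, read off the coefficient $A$ and centre $c$, and then demand $A = \tfrac1\tau + \lambda$ with the same split $\frac1\tau$-part centred at $\xvec^*$ and $\lambda$-part centred at $0$ — more precisely demand that the completed square equals $\frac{1}{2\tau}\nrm{\xvec-\xvec^*}^2 + \frac{\lambda}{2}\nrm{\xvec}^2$ up to an additive constant. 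Carrying out the completion-of-squares bookkeeping yields exactly two scalar equations in the two unknowns $\taut,\lt$, whose solution I would check to be precisely $\taut = \tau S R^{2\alpha+2}$ and $\lt = \frac{S(1+\lambda\tau)-R}{\taut R}$.

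Since the whole argument consists of reversible equivalences — $\xvec\mapsto R\xvec$ is a bijection of $\xseqN$ onto itself, multiplication of a functional by a positive constant preserves minimizers, and adding an $\xvec$-independent constant preserves minimizers — the "if and only if" follows automatically once the coefficient identities are established; there is no need to argue the two directions separately. It is worth noting that the constraint $R > S > 0$ is what guarantees $\taut > 0$, and one should observe that $\lt$ is not required to be nonnegative here (indeed it can be negative), which is harmless because Lemma \ref{lem:cfl} and the variational machinery only need $\sigma = \taut$ small relative to $\delta$ and the level of $\Fanz$, not a sign condition on the confinement — though if one wants $\lt\geq0$ along a trajectory this becomes the condition $S(1+\lambda\tau)\geq R$, matching the continuous picture.

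I expect the main obstacle to be purely the algebraic bookkeeping of the completion of squares: one must be careful that the penalty centre transforms as $\xvec^*\mapsto\ds_S\xvec^* = S\xvec^*$ on the right-hand side while the variable transforms with $R$, so the mismatch factor $S/R$ enters, and then tracking how this factor gets absorbed when one rescales the penalty strength by $R^{2\alpha+3}$ and re-splits it into a $\taut^{-1}$-piece and a $\lt$-piece requires matching both the quadratic coefficient and the linear (cross) term simultaneously. Everything else — the bijectivity of $\ds_R$, invariance of argmin under positive rescaling and additive constants, and the scaling law \eqref{eq:dscaling} which is already proved — is routine.
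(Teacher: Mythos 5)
Your proof is correct, and the algebra does close: after substituting $\wvec=R\xvec$ and multiplying by the positive constant $R^{2\alpha+1}$, matching the coefficient of $\spr{\xvec}{\xvec^*}$ forces $R^{2\alpha+2}S/\taut=1/\tau$, i.e.\ $\taut=\tau SR^{2\alpha+2}$, and matching the coefficient of $\nrm{\xvec}^2$ then gives $\lt R^{2\alpha+3}=\tfrac1\tau+\lambda-\tfrac{R}{\tau S}$, which is exactly $\lt=\frac{S(1+\lambda\tau)-R}{\taut R}$, with all remaining terms independent of $\xvec$. This is, however, a genuinely different route from the paper's. The paper first reduces to $S=1$ by a separate rescaling computation, and then introduces a one-parameter family $g(\yvec,r)=\frac12\nrm{\ds_r\yvec-\xvec^*}^2+\ldots$ interpolating between the two variational problems, so that suitable positive multiples of $g(\cdot,1)$ and $g(\cdot,R)$ are the functionals in \eqref{eq:dmm3} and \eqref{eq:dmmds}; the punch line there is that $\partial_r g(\yvec,r)=g(\yvec,1)-\tfrac12\nrm{\xvec^*}^2$ is independent of $r$, whence $g(\cdot,r)=r\,g(\cdot,1)+\textnormal{const}$ and the whole family shares its minimizers. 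Your direct completion-of-squares argument is more elementary and avoids both the $S=1$ reduction and the integration in $r$; what the paper's argument buys is the structural picture behind the constants (the discrete shadow of the continuous rescaling flow of \cite[Theorem 5.6]{MMS}), but both arguments use only the homogeneity degree $2\alpha+1$ of $\Fanz$ from \eqref{eq:dscaling} and hence both generalize to any functional with that scaling law, as the paper remarks after the lemma. Your closing observations --- that $\ds_R$ is a bijection of $\xseqN$ onto itself, that argmins are invariant under positive scalar multiples and additive constants, and that no sign condition on $\lt$ is needed --- are precisely the points required to upgrade the coefficient identities to the stated ``if and only if,'' so nothing is missing.
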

It is not difficult so see, that this Lemma holds for all functionals $\Vz:\xseqN\to\setR$ with the same scaling property as $\Fanz$ in \eqref{eq:dscaling}.

\begin{proof2}{Lemma \ref{lem:dmm_scaling}}
To simplify the proof, we show first that we can assume $S=1$ without loss of generality,
which is because of the following calculation: 
If for $R>S>0$ the vector $\ds_R\xvec$ minimizes \eqref{eq:dmmds}, 
then the linearity of $\nrm{\cdot}$ and \eqref{eq:dscaling} yields
\begin{align*}
	\Fy(\lt,\taut,\ds_R\xvec,\ds_S\xvec^*)
	&= \frac{S^2}{2\taut}\nrm{S^{-1}\ds_R\xvec-\xvec^*}^2 + S^{-2\alpha+1}\Fanz(S^{-1}\ds_R\xvec) + S^2\frac{\lt}{2}\nrm{S^{-1}\ds_R\xvec}^2 \\
	&= S^{-(2\alpha+1)}\left(\frac{1}{2\taut S^{-(2\alpha+3)}}\nrm{\ds_{\widetilde{R}}\xvec-\xvec^*}^2 
	+ \Fanz(\ds_{\widetilde{R}}\xvec) + S^{2\alpha+3}\frac{\lt}{2}\nrm{\ds_{\widetilde{R}}\xvec}^2\right) \\
	&= S^{-(2\alpha+1)}\Fy(\widetilde{\lambda},\widetilde{\tau},\ds_{\widetilde{R}}\xvec,\xvec^*),
\end{align*}
with $\widetilde{R} = \frac{R}{S} > 1 > 0$ and the new constants
\begin{align*}
	\widetilde{\tau} = \tau S R^{2\alpha+2} S^{-(2\alpha+3)} = \tau \widetilde{R}^{2\alpha+3}, \quad\textnormal{\and}\quad
	\widetilde{\lambda} = S^{2\alpha+3}\frac{(1+\lambda\tau) - R/S}{\taut R/S} = \frac{(1+\lambda\tau) - \widetilde{R}}{\widetilde{\tau} \widetilde{R}},
\end{align*}
hence $\ds_{\widetilde{R}}\xvec$ minimizes $\Fy(\widetilde{\lambda},\widetilde{\tau},\ds_{\widetilde{R}}\xvec,\xvec^*)$.

So assume $S=1$ and $R>1$ in \eqref{eq:dmmds} by now. Further introduce the functional $g:\xseqN\times\setR\to\setR$
\begin{align*}
	g(\yvec,r) := \frac{1}{2}\nrm{\ds_r\yvec - \xvec^*}^2 + r\Fan(\yvec) + \frac{r}{2}(1+\lambda\tau-r)\nrm{\yvec}^2 ,
\end{align*}
then by definition
\begin{align}\label{eq:gr}
	\tau^{-1}g(\yvec,1) = \Fy(\lambda,\tau,\yvec,\xvec^*),\quad\textnormal{and}\quad
	(\tau R^{2\alpha+2})^{-1}g(\yvec,R) = \Fy(\lt,\taut,\ds_R\yvec,\xvec^*).
\end{align}
For fixed $\yvec\in\xseqN$, a straight-forward calculation shows, that the derivative of $r\mapsto g(\yvec,r)$ holds
\begin{align*}
	\partial_r g(\yvec,r) 
	&= \spr{\ds_r\yvec - \xvec^*}{\yvec} + \Fan(\yvec) - \frac{r}{2}\nrm{\yvec}^2 + \frac{1}{2}(1+\lambda\tau-r)\nrm{\yvec}^2 \\
	&= -\spr{\xvec^*}{\yvec} + \Fan(\yvec) + \frac{1}{2}(1+\lambda\tau)\nrm{\yvec}^2
	= \frac{1}{2}\nrm{\yvec - \xvec^*}^2 - \frac{1}{2}\nrm{\xvec^*}^2 + \Fan(\yvec) + \frac{\lambda\tau}{2}\nrm{\yvec}^2 \\
	&= g(\yvec,1) - \frac{1}{2}\nrm{\xvec^*}^2.
\end{align*}
Hence, if $\xvec$ minimizes \eqref{eq:dmm3}, then the same vector minimizes $\yvec\mapsto g(\yvec,1)$ and further $\yvec\mapsto\partial_r g(\yvec,r)$
for any $r>0$. By integration
\begin{align*}
	&g(\yvec,r) - g(\yvec,1) = \int_1^r \partial_s g(\yvec,s) \dd s = (r-1)(g(\yvec,1) - \frac{1}{2}\nrm{\xvec^*}^2) \\
	\Longrightarrow\;&
	g(\yvec,r) = rg(\yvec,1) - (r-1)\frac{1}{2}\nrm{\xvec^*}^2
\end{align*}
for any $r>1$ and $\yvec\in\xseqN$. This means especially, that for arbitrary $r>1$, the function $g(\yvec,r)$ is minimal if and only if $g(\yvec,1)$ 
is so. This proves in combination with \eqref{eq:gr}, that $\ds_R\xvec$ is a minimizer of \eqref{eq:dmmds}.
By integration of $\partial_s g(\yvec,s)$ over $[r^{-1},1]$, $r>1$, one can analogously prove, that if $\widehat{\xvec}\in\xseqN$ is a minimizer of \eqref{eq:dmmds},
the rescaled vector $\ds_{R^{-1}}\widehat{\xvec}$ has to be a minimizer of \eqref{eq:dmm3}.
\end{proof2}

Before we prove the claim of Theorem \ref{thm:main3}, let us introduce the rescaled discrete Barenblatt-profile.
Define inductively for $n=0,1,\ldots$
\begin{align}\label{eq:Sn}
	\St^0 := 1,\quad \St^n = (1+\tau_n)\St^{n-1}.
\end{align}
Further take the minimizer $\xvecm\in\xseqN$ of the functional $\xvec\mapsto\Haiz(\xvec)$. 
Then denote the scaled vector $\bvec_{\Delta,\alpha,0}^n := \ds_{\St^n}\xvecm$ and define its corresponding density function
$\bantn = \cf_\theh[\bvec_{\Delta,\alpha,0}^n]$. This function can be interpreted as a self-similar solution of \eqref{eq:dmmds} with
initial density $\cf_\theh[\xvecm]$, $\lt=0$ and with time steps $\taut_n$ inductively defined by $\taut_n:=\tau_n\St^{n-1}\St^{2\alpha+2}$.

\begin{proof2}{Theorem \ref{thm:main3}}
As already mentioned above, we define a sequence of functions $\St^n$ inductively trough \eqref{eq:Sn} and declare 
a new partition of the time scale $[0,+\infty)$ by
\begin{align}\label{eq:partitiontaut}
	\{0=\sh_0<\sh_2<\ldots<\sh_n<\ldots\},\quad\textnormal{where } \sh_n:=\sum_{k=1}^n \taut_k
	\textnormal{ and } \taut_k:=\tau_k\St^{k-1}(\St^k)^{2\alpha+2},
\end{align}
and we write $\thett=(\taut_1,\taut_2,\ldots)$.
As a first conequence of the iterative character of the above object, we note that $(1+x)\leq e^x$ causes $\St^n \leq e^{t_n}$
for any $n=0,1,\ldots$. Moreover
\begin{align*}
	\sh_n = \sum_{k=1}^n \tau_k\St^{k-1}(\St^k)^{2\alpha+2}
	= \sum_{k=1}^n \tau_k(1+\tau_k)^{2\alpha+2}(\St^{k-1})^{2\alpha+3}
	\leq (1+\tau)^{2\alpha+2} \sum_{k=1}^n\tau_k e^{(2\alpha+3) t_{k-1}}.
\end{align*}
This is nice, insofar as the right hand side is a lower sum of the integral $(1+\tau)^{2\alpha+2}\int_0^{t_n}e^{(2\alpha+3) s}\dd s$, hence
\begin{equation}\begin{split}\label{eq:sntn}
	&\sh_n \leq (1+\tau)^{2\alpha+2}(2\alpha+3)^{-1}\big[e^{(2\alpha+3)t_n}-1\big] \\
	\Longrightarrow\;&
	e^{-t_n} \leq \big(1 + a_\tau\sh_n(2\alpha+3)\big)^{-1/(2\alpha+3)},
\end{split}\end{equation}
with $a_\tau = (1+\tau)^{-(2\alpha+2)}$ converging to $1$ as $\tau\to0$.
For a given solution $\xvec_\Delta$ of \eqref{eq:dmm3} with $\lambda=1$ and fixed discretization $\Delta=(\thet;\theh)$,
it is a trivial task to check, that the recursively defined sequence of vectors $\ds_{\St^n}\xvec_\Delta^n$
is a solution to \eqref{eq:dmmds} for $S=\St^{n-1}$, $R=\St^n$, $\lt=0$ and $\taut=\taut_n$ defined in \eqref{eq:partitiontaut}.
Henceforth, we write $\xvec_\Deltat^n = \ds_{\St^n}\xvec_\Delta^n$ with the discretization $\Deltat=(\thett;\theh)$
We can hence use the discrete scaling property of $\Halz$ and invoke \eqref{eq:expHn} of Lemma \ref{lem:exp}, then
\begin{equation}\begin{split}\label{eq:better}
	&(1+2\tau_n)(\St^n)^{\frac{2\alpha-1}{2}}\big(\Haiz(\xvec_\Deltat^n) - \Haiz(\bvec_{\Delta,\alpha,0}^n)\big) 
	\leq (\St^{n-1})^{\frac{2\alpha-1}{2}}\big(\Haiz(\xvec_\Deltat^{n-1}) - \Haiz(\bvec_{\Delta,\alpha,0}^{n-1})\big) \\
	\Longrightarrow\;&
	(1+2\tau_n)(1+\tau_n)^{\frac{2\alpha-1}{2}}\big(\Haiz(\xvec_\Deltat^n) - \Haiz(\bvec_{\Delta,\alpha,0}^n)\big) 
	\leq \Haiz(\xvec_\Deltat^{n-1}) - \Haiz(\bvec_{\Delta,\alpha,0}^{n-1}) \\
	\Longrightarrow\;&
	(1+2\tau_n)\big(\Haiz(\xvec_\Deltat^n) - \Haiz(\bvec_{\Delta,\alpha,0}^n)\big) 
	\leq \Haiz(\xvec_\Deltat^{n-1}) - \Haiz(\bvec_{\Delta,\alpha,0}^{n-1}),
\end{split}\end{equation}
where we used in the last step $(1+\tau_n)>1$.
As before in the proof of \eqref{eq:expH} of Theorem \ref{thm:main2}, this yields for any $n=0,1,\ldots$,
due to \eqref{eq:sntn}
\begin{align*}
	\Haiz(\xvec_\Deltat^n) - \Haiz(\bvec_{\Delta,\alpha,0}^n) 
	&\leq \big(\Haiz(\xvec_\Deltat^0) - \Haiz(\xvecm)\big) e^{-\frac{2t_n}{1+2\tau}} \\
	&\leq \big(\Haiz(\xvec_\Deltat^0) - \Haiz(\xvecm)\big) \big(1 + a_\tau\sh_n(2\alpha+3)\big)^{-\frac{2}{b_\tau(2\alpha+3)}},
\end{align*}
with $b_\tau = 1+2\tau$.
Theorem \ref{thm:main3} follows, using $\bantn=\cf_\theh[\bvec_{\Delta,\alpha,0}]$ and a Csiszar-Kullback inequality, see \cite[Theorem 30]{CaJuMa}.
\end{proof2}

%
%
\section{Numerical experiments}\label{sec:num}
%
%
\subsection{Non-uniform meshes}
An equidistant mass grid --- as used in the analysis above --- leads to a good spatial resolution of regions where the value of $u^0$ is large,
but provides a very poor resolution in regions where $u^0$ is small.
Since we are interested in regions of low density, and especially in the evolution of supports,  
it is natural to use a \emph{non-equidistant} mass grid with an adapted spatial resolution, like the one defined as follows:
The mass discretization of $[0,M]$ is determined by a vector $\vech=(\xi_0,\xi_1,\xi_2,\ldots,\xi_{K-1},\xi_K)$,
with $0=\xi_0 < \xi_1 < \cdots < \xi_{K-1} < \xi_K = M$
and we introduce accordingly the distances (note the convention $\xi_{-1} = \xi_{K+1} = 0$)
\begin{align*}
  \delta_\kappa = \xi_\kappp-\xi_\kappm, 
  \quad\text{and}\quad 
  \delta_k = \frac12(\delta_\kph+\delta_\kmh) = \frac12(\xi_{k+1}-\xi_{k-1})
\end{align*}
for $\kappa\in\hval$ and $k\in\ival$, respectively.
The piecewise constant density function $u\in\densNN$ corresponding to a vector $\xvec\in\setR^{K-1}$
is now given by
\begin{align*}
  u(x) = z_\kappa \quad\text{for $x_\kappm<x<x_\kappp$}, \quad
  \text{with}\quad z_\kappa = \frac{\delta_\kappa}{x_\kappp-x_\kappm}.
\end{align*}
The Wasserstein-like metric (and its corresponding norm) needs to be adapted as well:
the scalar product $\spr{\cdot}{\cdot}$ is replaced by
\begin{align*}
  \langle\vvec,\wvec\rangle_\vech = \sum_{k\in\ival} \delta_kv_kw_k
	\quad\textnormal{and}\quad \nrm{\vvec} = \langle\vvec,\vvec\rangle_\vech.
\end{align*}
Hence the metric gradient $\nabla_\vech f(\xvec)\in\setR^{K+1}$ of a function $f:\xseqNN\to\setR$ at $\xvec\in\xseqNN$
is given by
\begin{align*}
  \big[\nabla_\vech f(\xvec)\big]_k = \frac1{\delta_k}\partial_{x_k}f(\xvec).
\end{align*}
Otherwise, we proceed as before:
the entropy is discretized by restriction, and the discretized information functional is the self-dissipation of the discretized entropy.
Explicitly, the resulting fully discrete gradient flow equation
\begin{align*}
  \frac{\xvec_\Delta^n-\xvec_\Delta^{n-1}}\tau = - \nabla_\vech\Falz(\xvec_\Delta^n)
\end{align*}
attains the form
\begin{equation}\begin{split}\label{eq:nonuniform}
  \frac{x^n_k-x^{n-1}_k}{\tau_n} 
  = \frac{2\alpha}{(2\alpha+1)^2\delta_k}\left[
    (z^n_\kph)^{\alpha+\frac{3}{2}}[\operatorname{D}_\vech^2(\zvec^n)^{\alpha+\frac{1}{2}}]_\kph
    -(z^n_\kmh)^{\alpha+\frac{3}{2}}[\operatorname{D}_\vech^2(\zvec^n)^{\alpha+\frac{1}{2}}]_\kmh
  \right]
	+ \lambda x_k^n,
\end{split}\end{equation}
with $[\operatorname{D}_\vech^2\zvec^{\alpha+\frac{1}{2}}]_\kmh:=(z_{\kph}^{\alpha+\frac{1}{2}}-2z_\kmh^{\alpha+\frac{1}{2}}+z_\kmd^{\alpha+\frac{1}{2}})/\delta_k^2$

%
\subsection{Implementation}
To guarantuees the existence of an initial vector $\xvec_\Delta^0\in\xseqN$, 
which "reaches" any mass point of $u^0$, i.e.  
$[x_0^0,x_K^0]\subseteq\operatorname{supp}(u^0)$,
one has to consider initial density functions $u^0$ with compact support.

Starting from the initial condition $\xvec_\Delta^0$, the fully discrete solution is calculated inductively 
by solving the implicit Euler scheme \eqref{eq:nonuniform} for $\xvec_\Delta^n$, given $\xvec_\Delta^{n-1}$.
In each time step, a damped Newton iteration is performed, with the solution from the previous time step as initial guess.

%
\subsection{Experiment I -- Exponential decay rates}
\begin{figure}[t]
  \centering
  \subfigure{\includegraphics[width=0.45\textwidth]{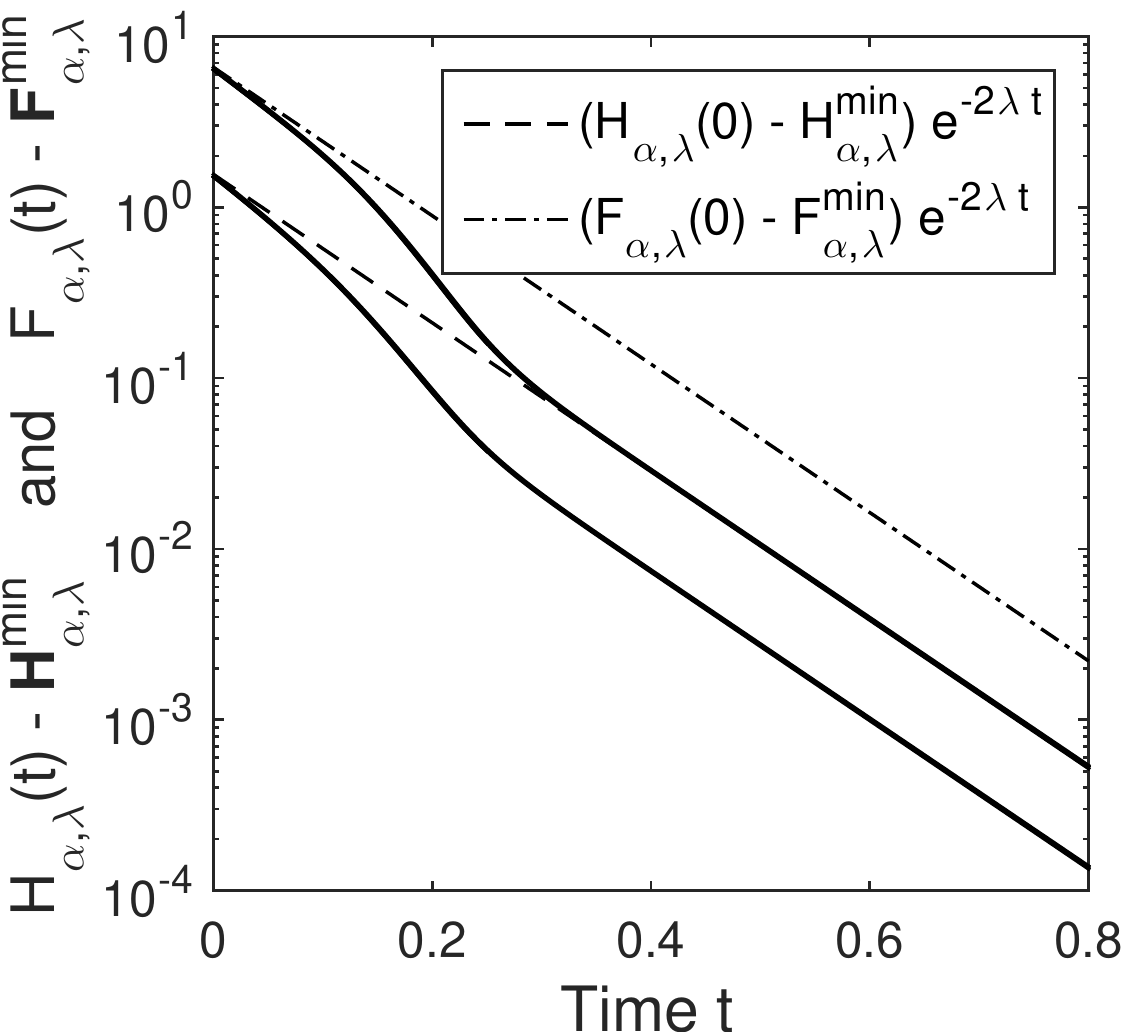}}
  \subfigure{\includegraphics[width=0.42\textwidth]{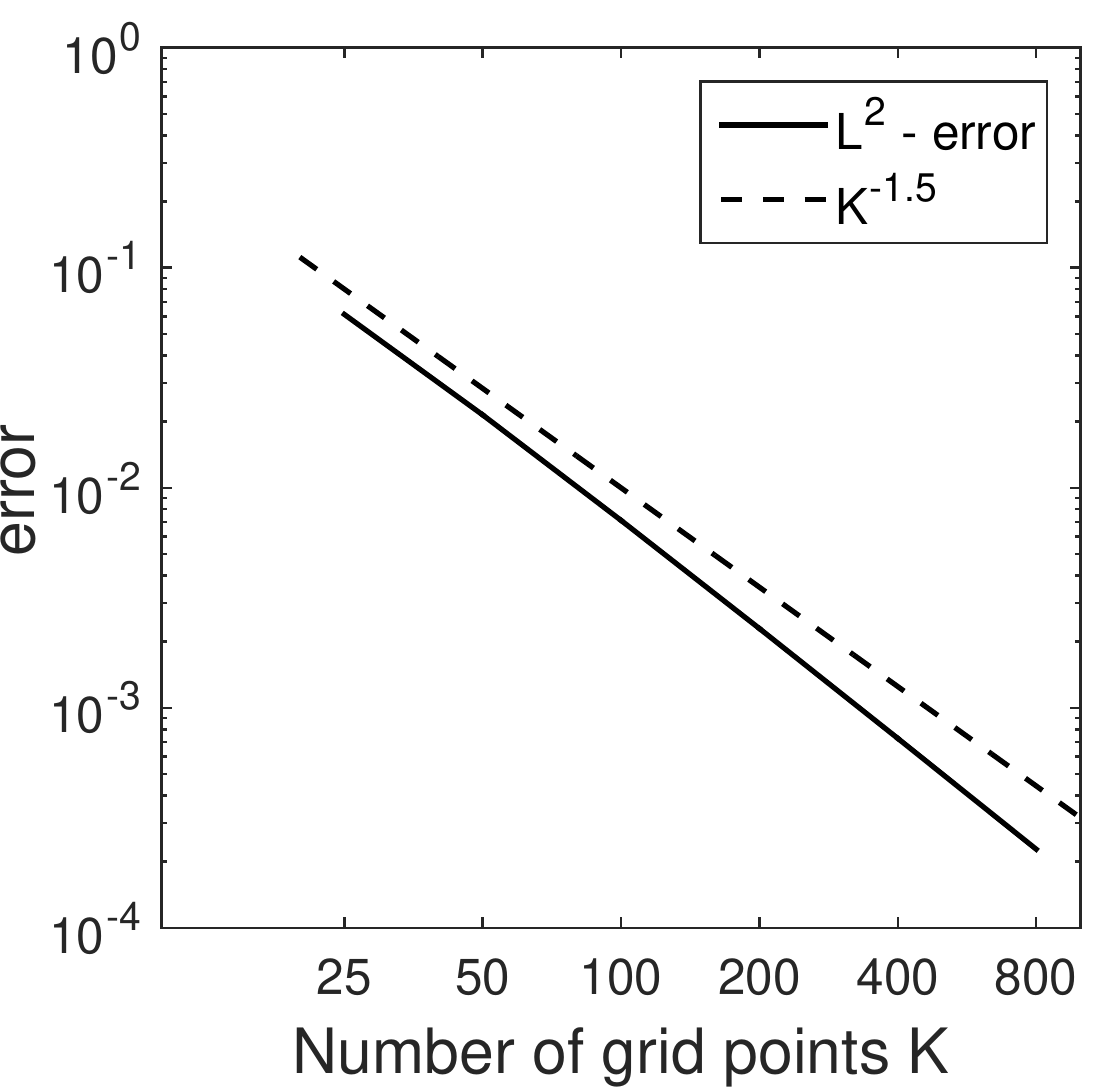}}
  \caption{\emph{Left}: Numerically observed decay of $H_{\alpha,\lambda}(t)-\Halzm$ and $F_{\alpha,\lambda}(t)-\Falzm$ along a time period of $t\in[0,0.8]$,
						using $K=25,50,100,200$, in comparison to the upper bounds 
						$(\Hal(u^0)-\Hal(\bal))\exp(-2\lambda t)$ and $(\Fal(u^0)-\Fal(\bal))\exp(-2\lambda t)$, respectively.
					\emph{Right}: Convergence of discrete minimizers $\um$ with a rate of $K^{-1.5}$.}
  \label{fig:fig2}
\end{figure}

As a first numerical experiment, we want to analyse the rate of decay in case of positive confinement $\lambda=5$, using $\alpha=1$.
For that purpose, consider the initial density function
\begin{align}\label{eq:u0exp}
	u^0 = \begin{cases}  0.25|\sin(x)|\cdot(0.5+\indy_{x>0}(x)) ,& x\in[-\pi,\pi],\\ 0 ,&\textnormal{esle}\end{cases},
\end{align}
Figure \ref{fig:fig1} shows the evolution of the discrete density $u_\Delta$ at times $t= 0.05,0.1,0.15,0.175,0.225$, using $K=200$. 
The two initially seperated clusters quickly merge, and finally changes the shape towards a Barenblatt-profile (doted line).

The exponential decay of the entropies $\Halz$ and $\Falz$ along the solution can be seen in figure \ref{fig:fig2}/left
for $K=25,50,100,200$, where we observed the evolution for $t\in[0,0.8]$. 
Note that we write $H_{\alpha,\lambda}(t)=\Halz(\xvec_\Delta^n)$ and $F_{\alpha,\lambda}(t)=\Falz(\xvec_\Delta^n)$ for $t\in(t_{n-1},t_n]$,
and set $H_{\alpha,\lambda}(0)=\Halz(\xvec_\Delta^0)$ and $F_{\alpha,\lambda}(0)=\Falz(\xvec_\Delta^0)$.
As the picture shows, the rate of decay does not really depent on the choice of $K$,
in fact the curves lie de facto on the top of each other.
Furthermore, the curves are bounded from above by $(\Hal(u^0)-\Hal(\bal))\exp(-2\lambda t)$ 
and $(\Fal(u^0)-\Fal(\bal))\exp(-2\lambda t)$ at any time, respectively, as \eqref{eq:expH} \& \eqref{eq:expF} from 
Theorem \ref{thm:main2} postulate. 
One can even recognize, that the decay rates are even bigger at the beginning, until the moment $t=...$ 
when $u_\Delta$ finishes its "fusion" to one single Barenblatt-like curve. 
After that, the solution's evolution mainly consists of a transveral shift towards the stationary solution $\bal$, 
which is reflected by a henceforth constant rate of approximately $-2\lambda$.

Moreover, figure \ref{fig:fig3}/right pictures the converence of $\um$ towards $\bal$. We used several values for 
the spatial discretization, $K=25,50,100,200,400,800$,
and plotted the $L^2$-error. The observed rate of convergence is $K^{-1.5}$.

%
\subsection{Experiment II -- Self-similar solutions}
A very interesting consequence of section \ref{sec:confn} is,
that the existence of self-similiar solutions bequeath from the continuous to the discrete case. 
In more detail, this means the following: 
Set $\lambda=0$ and define for $t\in[0,+\infty)$
\begin{align}\label{eq:selfsim}
	\bant(t,\cdot) := \ds_{R(t)} \bai,\quad\textnormal{with}\quad R(t) := \big(1+(2\alpha+3)t\big)^{1/(2\alpha+3)},
\end{align}
then $\bant$ is a solution of the continuous problem \eqref{eq:fofo0} with $u^0=\bant(0,\cdot)$.
In the discrete setting, solutions to \eqref{eq:nonuniform} with $\lambda=0$
are inductively given by 
an initial vector $\bvec_{\Delta,\alpha,0}^0$ with corresponding density $u_\Delta^0=\cf_\vech[\bvec_{\Delta,\alpha,0}^0]$
that approaches $\bant(0,\cdot)$,
and $\bvec_{\Delta,\alpha,0}^n=\ds_{\St^n}\bvec_{\Delta,\alpha,0}^0$ with $\St^n$ defined as in \eqref{eq:Sn},
for further $n=1,2\ldots,$.

As figure \ref{fig:fig3} shows, the resulting sequence of densities $u_\Delta$ (black lines) approaches the continuous solution $\bant$ of 
\eqref{eq:selfsim} (red lines)
astonishingly well, even if the discretization parameters are choosen quite rough. 
In this specific case we used $K = 50$ and $\tau = 10^{-3}$. 
The discrete and continuous solutions are evaluated at times $t=0,0.1,1,10,100$

\begin{figure}
  \centering
	\includegraphics[scale = 0.65]{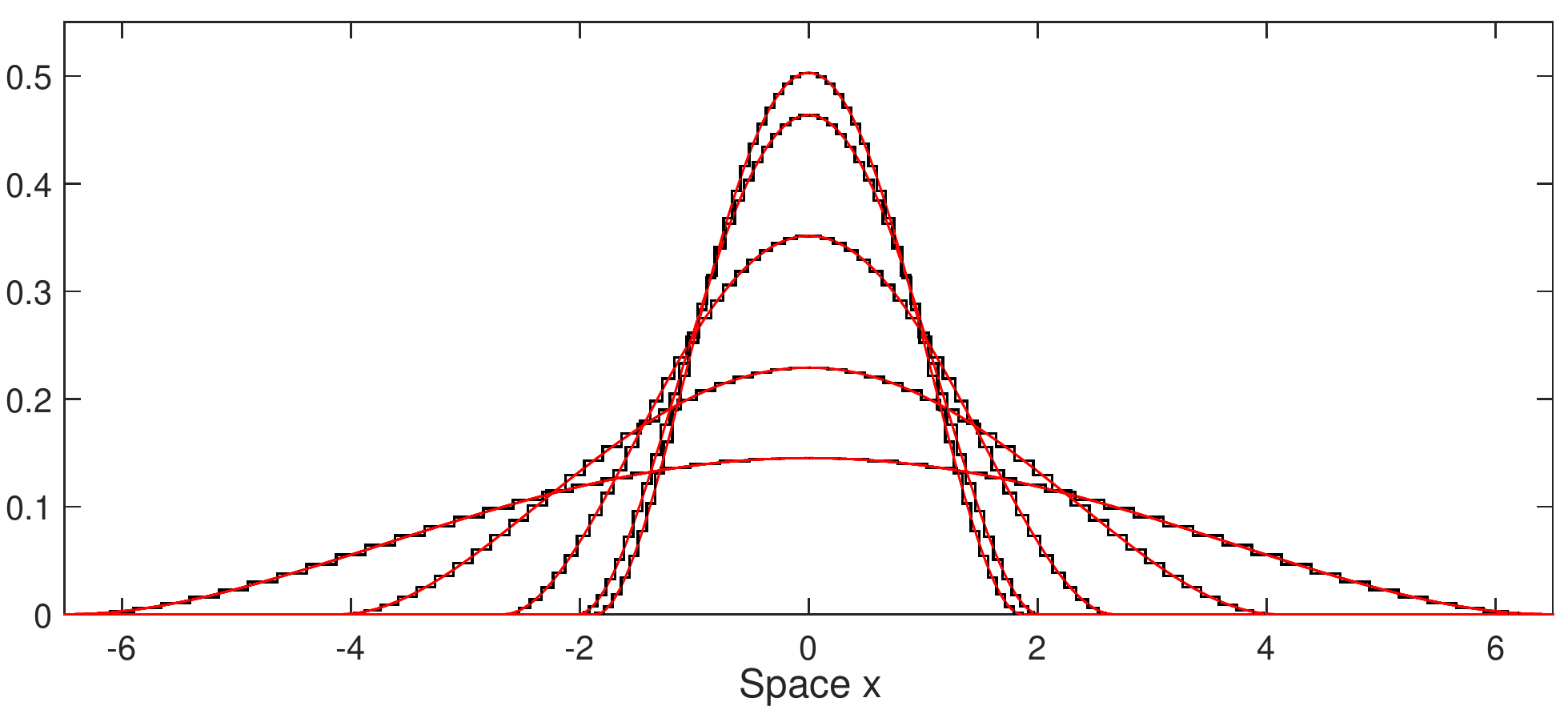}
  \caption{Snapshots of the densities $\bant(t,\cdot)$ (red lines) and $u_\Delta$ (black lines) for the initial condition $\bant(0,\cdot)$
	at times $t=0$ and $t= 0.1\cdot 10^{i}$, $i=0,\ldots,3$, 
	using $K=50$ grid points and the time step size $\tau=10^{-3}$.}
  \label{fig:fig3}
\end{figure}

\vspace{0.3cm}
\begingroup 
\begin{center}
	\begin{minipage}[t]{0.33\textwidth}
		\begin{center}
		\includegraphics[scale=0.4]{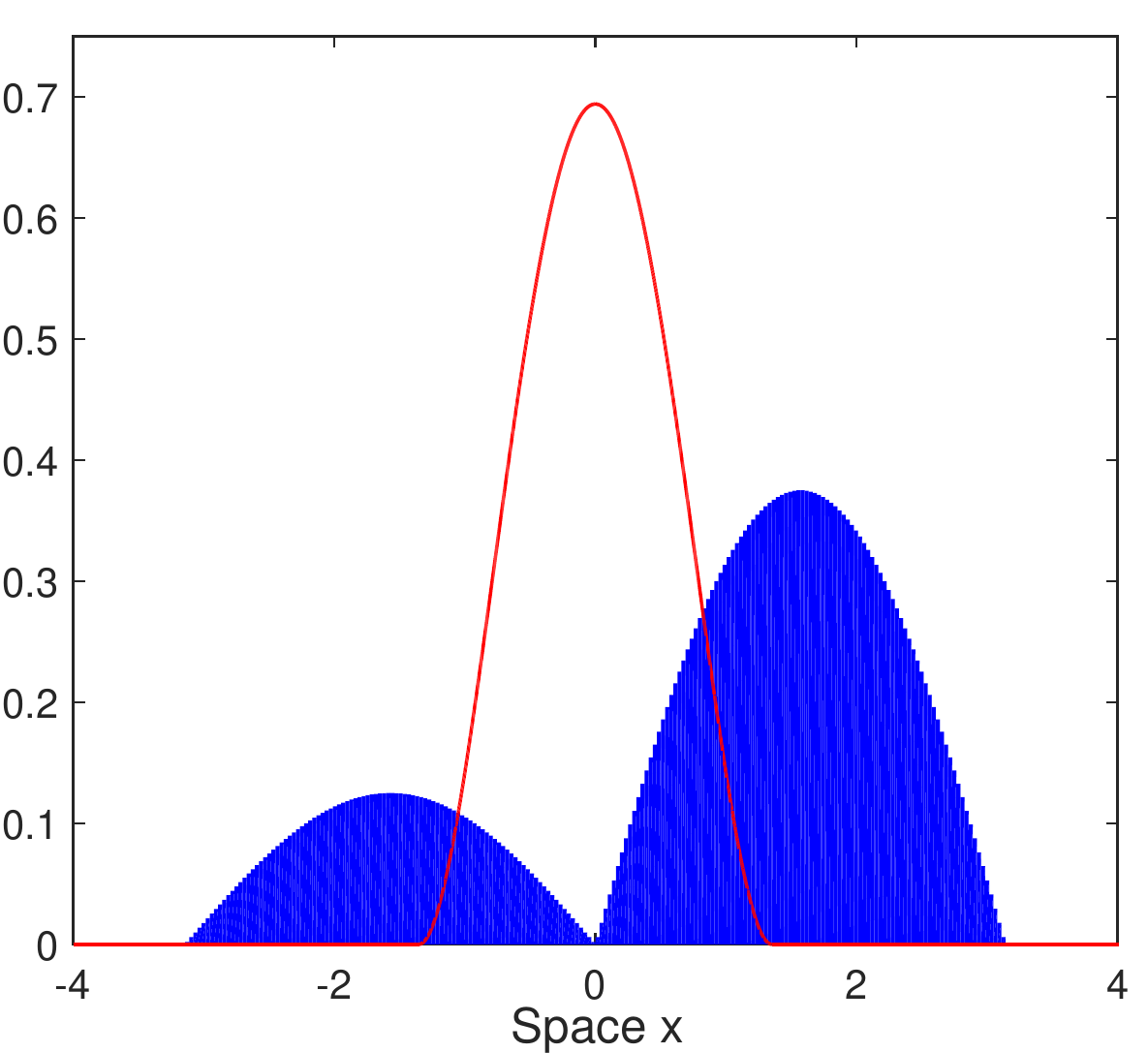}
		\end{center}
	\end{minipage}%
	\begin{minipage}[t]{0.33\textwidth}
		\begin{center}
		\includegraphics[scale=0.4]{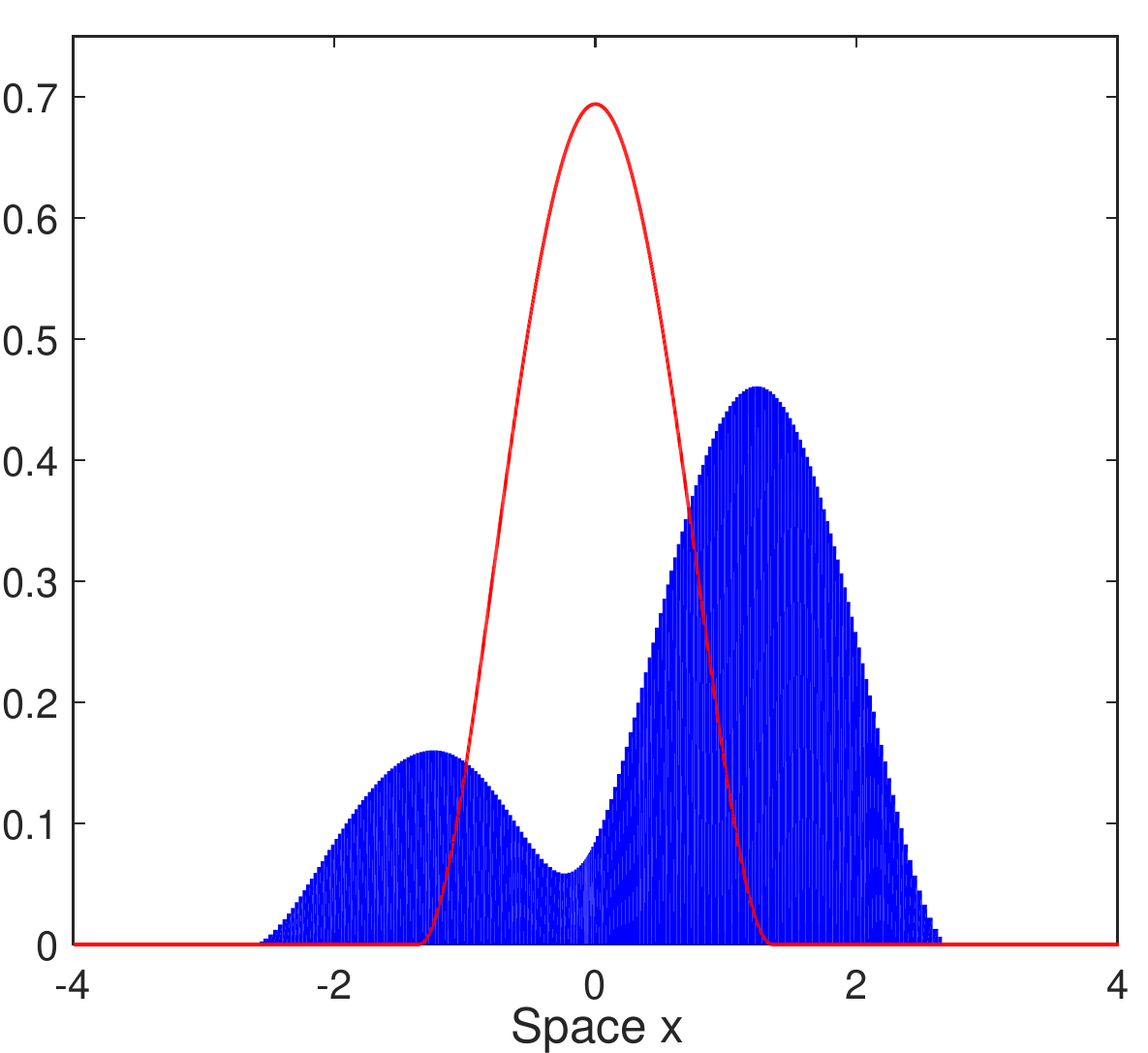}
		\end{center}
	\end{minipage}%
	\begin{minipage}[t]{0.33\textwidth}
		\begin{center}
		\includegraphics[scale=0.4]{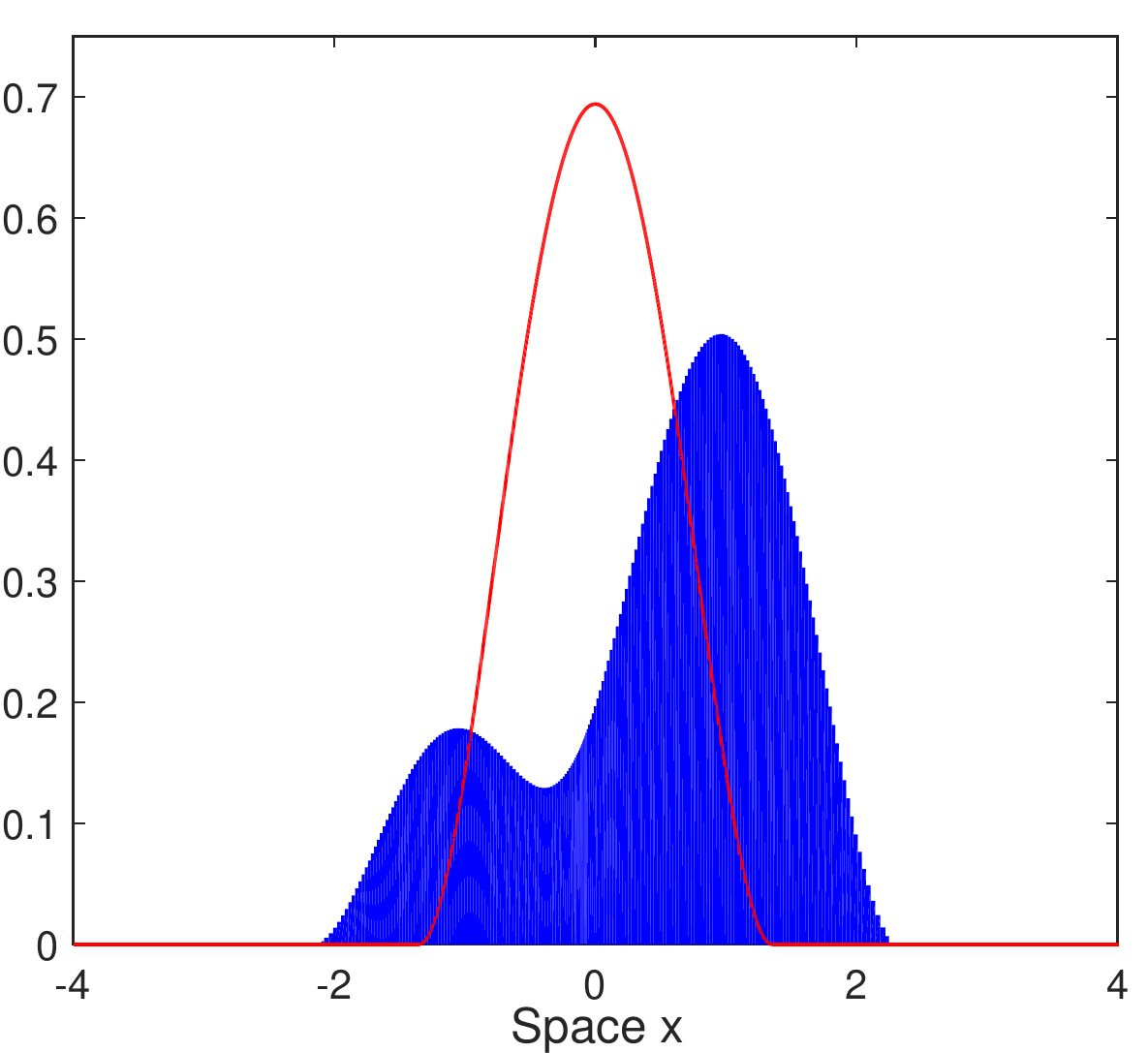}
		\end{center}
	\end{minipage}
	\begin{minipage}[t]{0.33\textwidth}
		\begin{center}
		\includegraphics[scale=0.4]{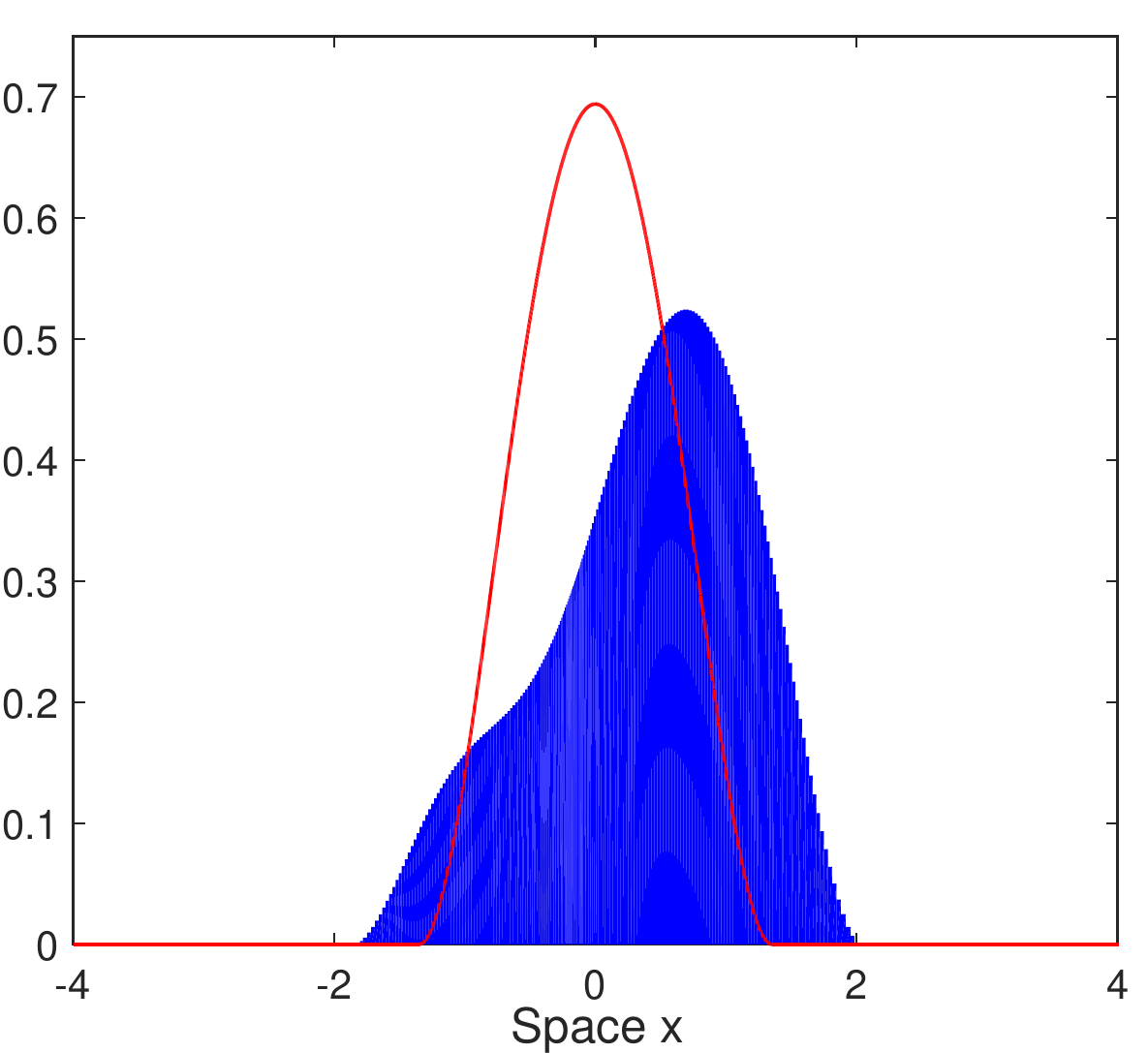}
		\end{center}
	\end{minipage}%
		\begin{minipage}[t]{0.33\textwidth}
		\begin{center}
		\includegraphics[scale=0.4]{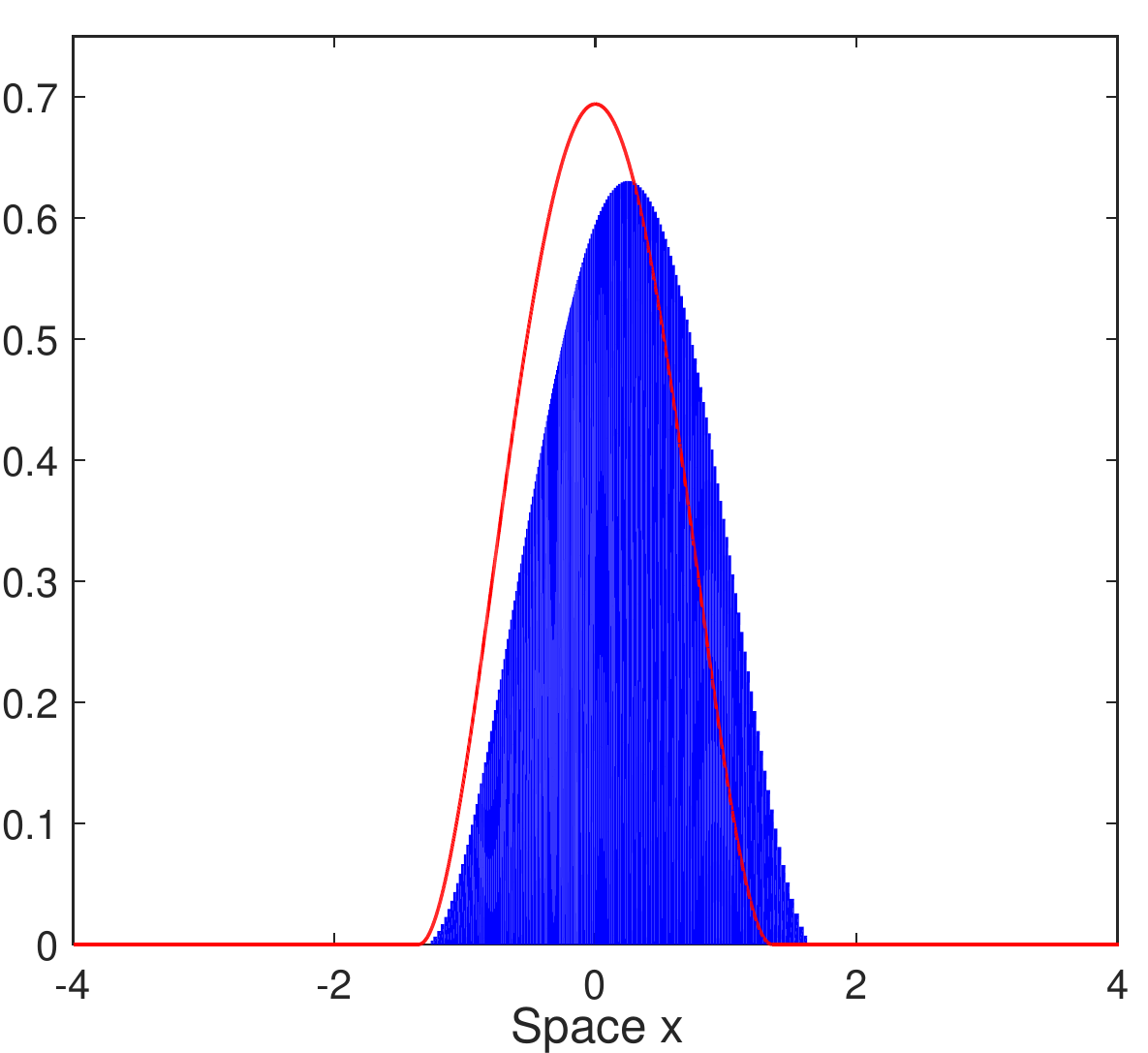}
		\end{center}
	\end{minipage}%
		\begin{minipage}[t]{0.33\textwidth}
		\begin{center}
		\includegraphics[scale=0.4]{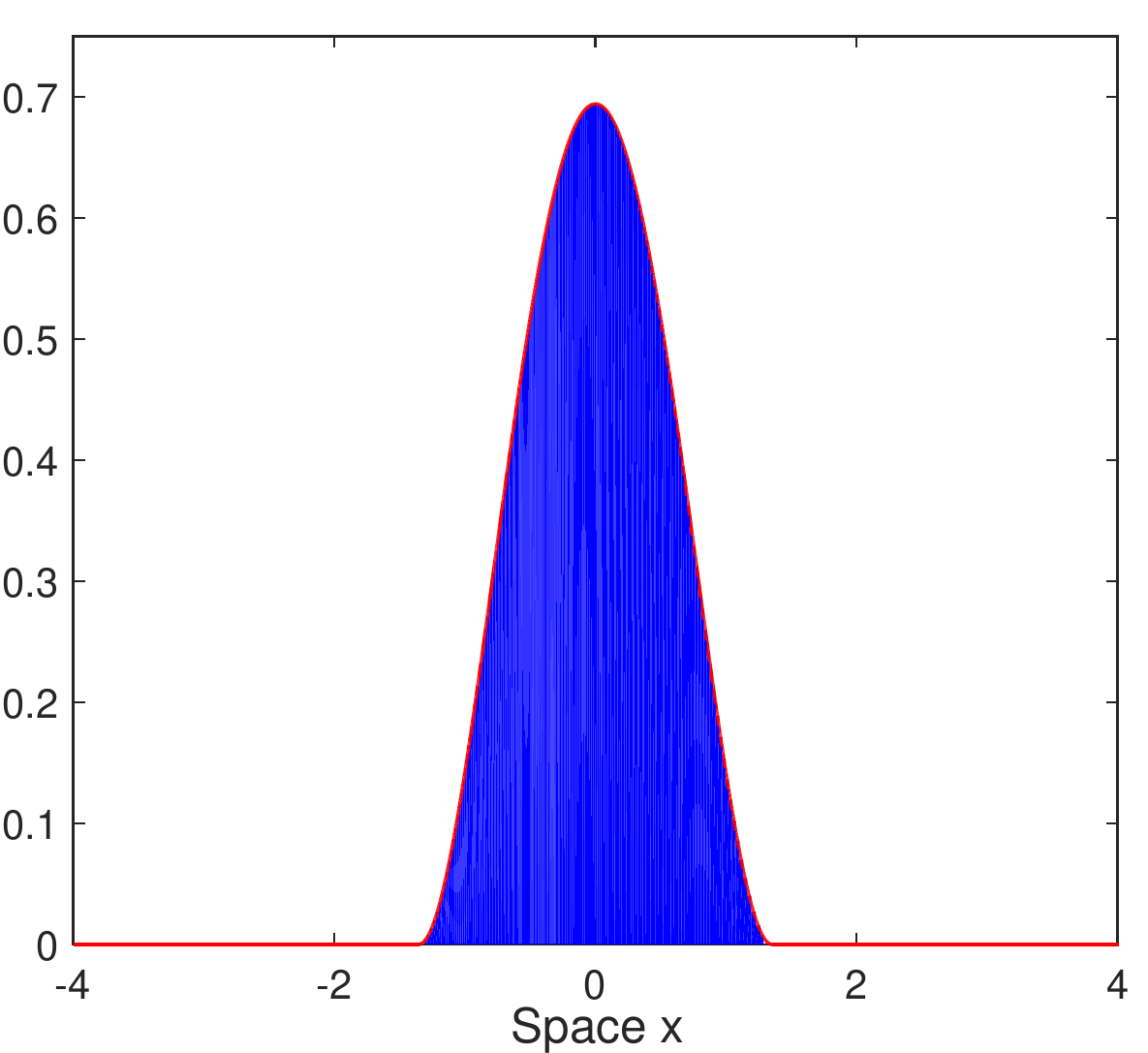}
		\end{center}
	\end{minipage}
\end{center}
\captionof{figure}{Evolution of a discrete solution $u_\Delta$, evaluated at different times $t = 0,0.05,0.1,0.15,0.175,0.25$ (from top left to bottom right).
									The red line is the corresponding Barenblatt-profile $\bal$.}
\label{fig:fig1}
\endgroup
\vspace{0.3cm}

\bibliography{Horst}
\bibliographystyle{siam}

\end{document}